\setlist[enumerate]{label=\arabic*., ref=\arabic*}
\newtheorem{lemma}{Lemma}
\newtheorem{corollary}{Corollary}
\newtheorem{theorem}{Theorem}
\theoremstyle{definition}
\newtheorem{definition}{Definition}
\newtheorem{remark}{Remark}
\newtheorem{proposition}{Proposition}
\newtheorem*{notation}{Notation}
\newtheorem*{connecting}{Connecting Principle}
\newcommand{\PPi}{\boldsymbol{\Pi}}
\newcommand{\len}{\textbf{l}}
\title{The Effective Ehrenpreis conjecture}
\author{Qiliang LUO}
\begin{document}
	\maketitle
	\begin{abstract}
		Let $M$ and $N$ be two closed hyperbolic Riemann surfaces.  The Ehrenpreis Conjecture (proved by Kahn-Markovic in \cite{KM-Correction})  asserts that for any  $\epsilon>0$ there are finite covers $M_\epsilon \to M$, and $N_\epsilon \to N$, such that the Teichmuller distance (in the suitable moduli space) between  $M_\epsilon$ and $N_\epsilon$ is less than $\epsilon$.  It is  natural to ask how large  the degrees of these coverings need to be to achieve that the distance between $M_\epsilon$ and $N_\epsilon$ is less than $\epsilon$. In this paper we show that there exists a constant $k>0$, depending only on $M$ and $N$, so that the covers  $M_\epsilon \to M$, and $N_\epsilon \to N$, can be chosen to have the degrees less than $\epsilon^{-k}$. We show that this bound is optimal by considering the case when $M$ and $N$ are arithmetic Riemann surfaces with the same invariant trace field which are not commensurable to each other. 
	\end{abstract}
	
	\setcounter{tocdepth}{1}
	\tableofcontents

	\section{Introduction}
	\subsection{Main Results}
	For any integer $g\geq 2$, let $\mathcal M_g$ be the moduli space of closed hyperbolic surfaces of genus $g$. For any $S_1,S_2\in\mathcal M_g$ we define the Teichmuller distance by
	\[
	d_{T}(S_1,S_2)=\min_{f:S_1\rightarrow S_2}\left\{\log K(f)\right\}
	\]
	where $f$ is quasiconformal map and $K(f)\geq 1$ the quasiconformal constant of $f$ (see \cite{A}).\par  Kahn and Markovi\'c proved in their Ehrenpreis conjecture paper \cite{KM-Correction} that for any two closed Riemann surfaces $M$ and $N$, and for any $\epsilon>0$, there exist finite coverings $M_\epsilon\rightarrow M$ and $N_\epsilon\rightarrow N$ such that $M_\epsilon$ and $N_\epsilon$ have the same genus, and satisfy 
	\[d_{T}(M_\epsilon,N_\epsilon)\leq\epsilon.\] 
	The main purpose of this paper is to provide an effective upper bound on the degree of the coverings. By improving Kahn-Markovi\'c's construction, especially in the part related to good pants homology, we prove the following theorem.
	\begin{theorem}[Effective Ehrenpreis]\label{main}
		Let $M$ and $N$ be two closed hyperbolic Riemann surfaces. There exist constants $k,\epsilon_0>0$ depending only on $M$ and $N$ such that the following holds. For any $\epsilon_0\geq\epsilon>0$, one can find finite coverings $M_\epsilon\rightarrow M$ and $N_\epsilon\rightarrow N$ of degree at most $\epsilon^{-k}$ satisfying $
			d_{T}(M_\epsilon,N_\epsilon)\leq\epsilon$.
	\end{theorem}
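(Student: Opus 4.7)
The plan is to follow the Kahn--Markovi\'c construction of \cite{KM-Correction}, but to track quantitative bounds at every step, replacing each qualitative equidistribution statement by an effective one carrying a polynomial error in $\epsilon$. Set $R = C\log(1/\epsilon)$ for a large constant $C=C(M,N)$; this will be the common target cuff length of the good pants we manufacture on both $M$ and $N$. The reason for this choice is that by effective exponential mixing of the geodesic flow on a closed hyperbolic surface — equivalently, an effective prime geodesic theorem with a polynomial error term — the number of good pants with cuff lengths $\epsilon$-close to $R$ grows like $e^{3R}$ up to a polynomial factor, and these pants are $\epsilon$-equidistributed in the appropriate space of pants once $R \gtrsim \log(1/\epsilon)$.

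The construction then proceeds in three quantitative steps. First, produce effectively equidistributed collections of good pants on $M$ and on $N$ at scale $\epsilon$; this is a direct consequence of effective mixing. Second, run Kahn--Markovi\'c's inductive matching to assemble good pants on $M$ with good pants on $N$ so that paired cuffs have lengths and twists agreeing to within $\epsilon$, keeping track of how many good pants enter the assembly as a polynomial function of $1/\epsilon$. Third, and most delicately, the matched assembly carries a residual ``boundary'' in good pants homology, which must be killed by adding further good pants without destroying the polynomial count.

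The main obstacle is this third step: making the \emph{good pants homology} quantitative. Qualitatively, Kahn--Markovi\'c show that any formal sum of good curves in the relevant kernel bounds a formal sum of good pants; effectively, one needs an explicit polynomial bound on the number of good pants required. The plan is to revisit their rotation-averaging and randomization arguments and replace each invocation of qualitative equidistribution on the frame bundle by an effective counterpart with polynomial rate, ultimately coming from effective decay of matrix coefficients on $\mathrm{PSL}_2(\R)$. This should yield a bound of $O(\epsilon^{-k_0})$ good pants to realize any null-homologous good curve sum arising from the matching step.

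Once the above is in place, the matched assemblies on $M$ and on $N$ determine finite covers $M_\epsilon \to M$ and $N_\epsilon \to N$ whose degrees are controlled by the total number of good pants used, hence at most $\epsilon^{-k}$ for a suitable $k = k(M,N)$. By construction the two assemblies differ only by $\epsilon$-small perturbations of cuff lengths and twists along paired cuffs; standard quasiconformal interpolation across sheared pants gluings then produces a map $M_\epsilon \to N_\epsilon$ with quasiconformal constant $1+O(\epsilon)$, giving $d_T(M_\epsilon,N_\epsilon) \leq \epsilon$ after absorbing the implicit constant into the choice of $\epsilon_0$ and $k$.
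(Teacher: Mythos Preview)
Your high-level picture is right in several respects: one takes $R\asymp\log(1/\epsilon)$, one needs effective mixing to get effective equidistribution of good pants, and the heart of the matter is an effective Good Pants Homology theorem giving polynomial control in $R\epsilon^{-1}$ on the correction. Those are exactly the ingredients the paper develops.

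However, your Step~2 contains a genuine structural confusion. You propose to ``assemble good pants on $M$ with good pants on $N$ so that paired cuffs have lengths and twists agreeing to within $\epsilon$.'' There is no such cross-surface matching in the Kahn--Markovi\'c scheme, and it is not clear how to make sense of one: good pants on $M$ and on $N$ are immersed in different surfaces, and the Hall-marriage matching is entirely internal to a single surface (matching feet of pants sharing a cuff $\gamma$ with feet of pants sharing $\overline\gamma$). Even under a charitable reading---build a good cover of $M$ and a good cover of $N$, then pair them up combinatorially---you are missing the step that guarantees the two covers have the \emph{same} pants-gluing combinatorics, without which no quasiconformal map can be produced.

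The paper's architecture sidesteps this entirely. One works on $M$ and $N$ completely separately: on each surface one takes the uniform multi-pants $\mu_1$, uses the effective Good Pants Homology theorem to correct its (small) boundary defect, and glues via Hall's theorem to obtain a cover $M_0\to M$ (resp.\ $N_0\to N$) of degree $\le e^{50R}$ admitting an $(\epsilon,R)$-good pants decomposition. The two surfaces are then linked only through the model orbifold $O_R$: by Kahn--Markovi\'c's criterion each of $M_0,N_0$ is $(1+O(\epsilon))$-quasiconformal to a genuine cover $M_0',N_0'$ of $O_R$, and one takes the minimal common cover $S$ of $M_0'$ and $N_0'$ over $O_R$, pulling back to obtain $M_\epsilon,N_\epsilon$. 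The index bound $[G_1:G_1\cap G_2]\le[G:G_2]$ controls these further degrees by $-3\chi(N_0)$ and $-3\chi(M_0)$, which are again polynomial in $\epsilon^{-1}$. This orbifold/common-cover step is what replaces your proposed cross-surface matching, and it is essential.
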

	
	This upper bound is optimal, as the following theorem shows. 
	\begin{theorem}\label{optimal} 
		Let $M,N$ be two arithmetic closed Riemann surfaces that have the same invariant trace field, but are not commensurable. There exist $k_0,\epsilon_0>0$ depending only on $M,N$ such that the following holds. Suppose  finite coverings $M_\epsilon\rightarrow M $ and $N_\epsilon\rightarrow N$ satisfy $
		  d_{T}(M_\epsilon,N_\epsilon)\leq\epsilon$ and $\epsilon\leq\epsilon_0$. Then the degrees of the covering maps are bigger than $\epsilon^{-k_0}$.
	\end{theorem}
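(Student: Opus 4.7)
The plan is to convert closeness in Teichm\"uller distance into a Diophantine inequality on the length spectra of $M_\epsilon,N_\epsilon$ and then apply Baker's theorem on linear forms in logarithms together with the arithmetic hypothesis to force the degrees $d_M,d_N$ to be polynomially large. The first input is Wolpert's lemma: a $K$-quasiconformal map between closed hyperbolic surfaces distorts every closed geodesic length by a factor in $[K^{-1},K]$. Applied to $M_\epsilon,N_\epsilon$ with $K=e^\epsilon$, this yields a bijection of primitive closed geodesics whose corresponding lengths lie in ratio $[e^{-\epsilon},e^{\epsilon}]$. The second input is the covering structure: every primitive closed geodesic of $M_\epsilon$ is an elevation of some primitive $\gamma_M\subset M$ of integer degree $n\le d_M:=\deg(M_\epsilon\to M)$ and length $n\,\ell(\gamma_M)$; similarly for $N_\epsilon$ with primitive $\gamma_N\subset N$ and $m\le d_N$. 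Combining, for each such elevation one obtains
\[
\bigl|n\,\ell(\gamma_M)-m\,\ell(\gamma_N)\bigr|\le 2\epsilon\,n\,\ell(\gamma_M).
\]

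Writing $\ell(\gamma)=2\log\lambda$ with $\lambda>1$ the larger eigenvalue of the holonomy --- an algebraic unit in a quadratic extension of the common invariant trace field $k$ --- the left-hand side is twice a $\mathbb{Z}$-linear form in two logarithms of algebraic numbers. Whenever this form is nonzero, the Baker--W\"ustholz theorem gives
\[
\bigl|n\log\lambda_M-m\log\lambda_N\bigr|\ge c(\lambda_M,\lambda_N)\,\max(n,m)^{-\kappa},
\]
with effective $c>0$ and $\kappa$ depending only on the heights of $\lambda_M,\lambda_N$ and on the degree of the ambient number field. If $\lambda_M$ is fixed and $\lambda_N$ is forced into a finite set --- equivalently, $\ell(\gamma_N)\le L_0$ for a constant $L_0=L_0(M,N)$ --- then $c,\kappa$ can be chosen uniformly in $\lambda_N$. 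Comparing the upper and lower bounds gives $\max(n,m)\gtrsim\epsilon^{-1/(\kappa+1)}$, and since $n\le d_M$, $m\le d_N$, this yields the desired $\max(d_M,d_N)\ge\epsilon^{-k_0}$ with $k_0=1/(\kappa+1)>0$.

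The main obstacle, and the place where the non-commensurability hypothesis truly enters, is to apply the above \emph{uniformly}. Concretely, one must (i) select a primitive $\gamma_0\subset M$ with eigenvalue $\lambda_0$ such that the Baker form $n\log\lambda_0-m\log\lambda_N$ is nonzero for every primitive $\gamma_N\subset N$ with $\ell(\gamma_N)\le L_0$ and positive integers $n,m$ in the relevant range (equivalently, $\lambda_0$ admits no multiplicative relation $\lambda_0^n=\lambda_N^m$ with any short $\lambda_N$), and (ii) arrange that some elevation of $\gamma_0$ in $M_\epsilon$ has bounded degree, so the matched $\ell(\gamma_N)$ is automatically $\le L_0$ and Baker's constants remain uniform. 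For (i), since $M$ and $N$ share the invariant trace field $k$ but have distinct quaternion algebras $A_M\ne A_N$, their trace units generate different multiplicative subgroups in the appropriate quadratic extensions of $k$; Reid's theorem that non-commensurable arithmetic surfaces have distinct length spectra, together with a finer analysis of these trace-unit groups, produces the required $\gamma_0$. For (ii), one chooses $\gamma_0$ short in $M$ and arranges via a pigeonhole argument among a family of short primitives that $\gamma_0^n\in\pi_1(M_\epsilon)$ for some $n$ bounded independently of $d_M$. The hardest technical step is (i): the explicit arithmetic construction of $\gamma_0$ from the non-commensurability hypothesis, which goes beyond the purely qualitative content of Reid's theorem.
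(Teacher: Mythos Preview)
Your step (ii) is a genuine gap, and it breaks the argument. You need an elevation of a fixed short primitive $\gamma_0\subset M$ to $M_\epsilon$ of degree $n$ bounded \emph{independently of} $d_M$, but no pigeonhole among finitely many short primitives can produce this: for congruence covers of an arithmetic surface the systole of $M_\epsilon$ grows like $\log d_M$, so \emph{every} elevation of \emph{every} primitive geodesic of $M$ has degree $\gtrsim\log d_M$. Without a bounded-length geodesic in $M_\epsilon$, the matched $\lambda_N$ has unbounded height, and the Baker--W\"ustholz lower bound degrades to roughly $B^{-C\,h(\lambda_M)h(\lambda_N)}$; with $h(\lambda_M),h(\lambda_N)\sim\log d_M$ you only get $d_M\gtrsim\epsilon^{-1/(\log d_M)^2}$, which gives no polynomial bound. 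Step (i) is also left undone, though it is the less fatal of the two.

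The paper bypasses both obstacles by replacing Baker with a much more elementary Diophantine input and by accepting curves of length $O(\log g)$ rather than $O(1)$. For the latter it invokes a result of D\'oria--Paiva: since $M,N$ are not commensurable, the quasiconformal map $f:M_\epsilon\to N_\epsilon$ is not homotopic to an isometry, and then there is a closed curve $\gamma$ in $M_\epsilon$ with $\ell(\gamma)\le a\log g$ and $\ell(\gamma)\ne\ell(f(\gamma))$. For the former, observe that $\xi:=\mathrm{tr}(A^2)-\mathrm{tr}((A')^2)$ is a nonzero algebraic integer in the common invariant trace field $k$, and by Takeuchi all its non-identity Galois conjugates are uniformly bounded; hence $|\xi|\ge c(M,N)>0$. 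This translates to $l/l'-1\ge c\,e^{-2l}$, a gap decaying only exponentially in the length, not in Baker-type height powers. Combining with Wolpert's $e^\epsilon\ge l/l'$ and $l\le a\log g$ gives $e^\epsilon-1\ge c\,g^{-2a}$, hence $g\ge\epsilon^{-1/4a}$ for small $\epsilon$. Note that this also dissolves your step (i): D\'oria--Paiva hands you the curve with $l\ne l'$ directly, so no multiplicative-independence analysis of trace units is needed.
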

	   Among other things, the proof of Theorem \ref{optimal} utilizes the theory of arithmetic Riemann surfaces, and the recent work of Doria and Paiva \cite{CN}.	According to \cite[Theorem 8.2.6, and Theorem 7.3.6]{Ari-Reid}, there exist two arithmetic Riemann surfaces $M$ and $N$ that have the same invariant trace field but different invariant quaternion algebras. Two Riemann surfaces that have different invariant quaternion algebras are not commensurable. This establishes the existence of the Riemann surfaces in Theorem \ref{optimal}.
	   \subsection{Covers with good pants decomposition}
	   The proof of Theorem \ref{main} essentially relies on the existence of covers with good pants decomposition for any closed hyperbolic Riemann surfaces.\par 
	   Let $M_0$ be a closed Riemann surface with a given pants decomposition $\mathcal C$. The reduced Fenchel-Nielsen coordinates of the pants decomposition $\mathcal{C}$ are then given by the collection of $\big(\mathbf{hl}(C),\mathbf{s}(C)\big)$ for any $C\in\mathcal C$. For any $C\in\mathcal C$, we define $\mathbf{hl}(C)$ as the half length of the geodesic curve $C$, and $\mathbf{s}(C)\in[0,\mathbf{hl}(C))$ as the reduced twisted parameter of two pants that has a boundary $C$, following \cite{KM-surface group} (see also Subsection 2.1). 
		\begin{definition}[Good Pants Decomposition]\label{good-decom}
			Let $\epsilon,R>0$. A pants decomposition $\mathcal C$ is called $(\epsilon,R)$-good if the reduced Fenchel-Nielsen coordinates satisfy the following inequalities 
			\begin{equation*}
				\begin{aligned}
					\left|\mathbf{hl}(C)-R\right|<\epsilon&\text{, and}&\left|\mathbf{s}(C)-1\right|<\frac{\epsilon}{R}
				\end{aligned}
			\end{equation*}
			for any closed curve $C\in\mathcal C$. 
		\end{definition}
		The following theorem asserts that any closed hyperbolic Riemann surface has covers with a good pants decomposition. Its proof will be presented in the remainder of this paper. 
		\begin{theorem}\label{main2}
			 Let $M$ be a closed hyperbolic Riemann surface. There exist constants $R_M,\epsilon_M,q_M>0$ such that the following holds. Suppose $R\geq R_M$, and $\epsilon_M\geq\epsilon\geq e^{-q_MR}$. Then there exists a finite covering $M_0\rightarrow M$ of degree at most $e^{50R}$ such that  $M_0$ admits a $(\epsilon,R)$-good pants decomposition. 
		\end{theorem}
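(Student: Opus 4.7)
The plan is to build the cover $M_0$ by assembling a carefully chosen finite multiset $\PPi$ of $(\epsilon,R)$-good pants in $M$, glued along their cuffs with reduced twist parameter within $\epsilon/R$ of $1$. The three ingredients, following Kahn-Markovi\'c but in an effective form, are: an effective supply and equidistribution of good pants in $M$, via exponential mixing of the frame flow on the unit tangent bundle $T^1 M$; an effective version of the good pants homology, which repairs small discrepancies in how pants meet along each cuff; and a quantitative count of the number of pants used, which directly controls the degree of $M_0 \to M$.

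First I would set up the space of $(\epsilon,R)$-good pants: immersed skew pairs of pants in $M$ whose three cuffs have half-length within $\epsilon$ of $R$, together with specified foot data on each cuff. Exponential mixing of the frame flow yields a count of order $e^{2R}$ good pants per closed geodesic of half-length close to $R$, and an equidistribution rate of order $e^{-\delta R}$ for the distribution of feet on each such cuff. The assumption $\epsilon \geq e^{-q_M R}$ is precisely what allows the equidistribution error to be absorbed into the tolerance $\epsilon/R$ in the reduced twist.

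Next, for each closed geodesic $\gamma$ of half-length near $R$ that is to appear as a cuff of the decomposition, two conditions must be arranged: $\gamma$ is shared by exactly two pants of $\PPi$, and their feet on $\gamma$ are positioned so that the reduced twist parameter is within $\epsilon/R$ of $1$. To enforce this I would apply an effective good pants homology argument: starting from a large initial collection chosen by equidistribution, one modifies $\PPi$ by inserting controlled numbers of additional good pants to match the feet positions on each cuff, working modulo the action of $\mathbf{Z}/\mathbf{hl}(\gamma)$ on foot configurations. The effective version of the Kahn-Markovi\'c closing step should give correction chains of size polynomial in $R$ times a bounded power of $e^R$, so that $|\PPi|$ grows at worst like $e^{cR}$ for a modest explicit $c<50$.

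Finally, the assembled $\PPi$ tautologically defines a surface $M_0$ together with a covering $M_0 \to M$ whose degree is proportional to $|\PPi|$, so the desired $e^{50R}$ bound reduces to the cardinality estimate above. The main obstacle I expect is the effective good pants homology step: the original qualitative argument relies on existence of good pants with prescribed combinatorial features, and making this quantitative without exponential blow-up in the correction chain requires combining the mixing rate of the frame flow with a simultaneous randomization of cuff lengths and foot positions, while keeping the exponent in the degree strictly below $50$. A secondary issue is the connectedness of $M_0$, which should follow from an irreducibility-style argument on the incidence graph between cuffs and pants in $\PPi$, again using equidistribution to exclude disconnected components of controlled size.
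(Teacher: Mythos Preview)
Your outline follows the same architecture as the paper: start from the multi-pants $\mu_1$ assigning weight one to every $(\epsilon,R)$-good pants, use mixing to show it is equidistributed and almost evenly distributed, then apply an effective good pants homology correction to make the boundary vanish, and glue via Hall marriage. Two points, however, are genuinely wrong or missing and would prevent you from closing the argument.

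First, it is not the case that each cuff $\gamma$ is ``shared by exactly two pants of $\PPi$''. The number of good pants meeting a given good curve $\gamma$ on each side is $K_\gamma \asymp \epsilon^2 R e^{R}$, and the gluing pairs feet on the $\gamma$-side with feet on the $\overline\gamma$-side using Hall's theorem, once you know the two foot measures are $\epsilon/R$-equivalent. The correction step is not about adding pants to make each cuff appear twice; it is about killing the small imbalance $K_\gamma - K_{\overline\gamma} = O(K_\gamma e^{-q_0 R})$ simultaneously for all $\gamma$.

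Second, and more importantly, you have not accounted for where the factor $e^{50R}$ actually comes from. The effective good pants homology map $\Phi$ takes values in $\frac{1}{N_R}\mathbb Z\PPi_{\epsilon,R}$ with $N_R \leq e^{30R}$; the correction $\Phi(\nu)$ has rational coefficients of size at most $\sqrt{\delta}<1$ (this is the randomness condition), so $\mu_2=\mu_1-\Phi(\nu)$ is still positive and equidistributed, but one must multiply by $N_R$ to obtain an integral multi-pants $\mu=N_R\mu_2$. Thus $\mu(\PPi)\leq 2N_R\,\mu_1(\PPi)$ with $\mu_1(\PPi)\asymp e^{3R}$, and the dominant contribution to the degree bound is the denominator-clearing factor $N_R$, not the size of the correction chains. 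Your sketch speaks only of ``inserting additional good pants'' and bounding $|\PPi|$ by $e^{cR}$ directly, which misses this mechanism entirely.

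Finally, connectedness of $M_0$ is not needed and is not addressed in the paper; the statement only asks for a finite covering of bounded degree, and a disconnected cover is acceptable (and harmless for the application to Theorem~\ref{main}).
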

		\subsection{Surfaces with good pants decomposition are close}
		The other key fact underlying the proof of Theorem \ref{main} is that, two Riemann surfaces with good pants decomposition are close in the moduli space after passing to finite covers. This fact is established in Corollary \ref{coverdeg}.\par 
		 Let $\Pi_R$ be the hyperbolic pair of pants whose three boundaries have the half length $R$. Let $S_R$ be the genus 2 hyperbolic surface that is obtained by gluing two copies of $\Pi_R$ with twist parameter $1$. By $O_R$ we denote the quotient of $S_R$ by the group of automorphisms of $S_R$. Thus any Riemann surface with $(0,R)$-good pants decomposition is a finite cover of the orbifold $O_R$.\par 
		 The following theorem is due to Kahn-Markovi\'c \cite[Theorem 2.2]{KM-surface group}. The theorem establishes that a Riemann surface with $(\epsilon,R)$-good pants decomposition is close to a finite cover of the orbifold $O_R$.
		\begin{theorem}[Kahn-Markovi\'c's Criterion]\label{criterion}
			There exist universal constants $\hat\epsilon,\hat R,c>0$ such that the following holds. Assume $M_0$ is a Riemann surface with a $(\epsilon,R)$-good pants decomposition for some $\hat\epsilon\geq\epsilon>0$, and $R>\hat R$. Then there exist a Riemann surface $M_0^\prime$ with $(0,R)$-pants decomposition, and a $(1+c\epsilon)$-quasiconformal map $f:M_0\rightarrow M_0^\prime$.
		\end{theorem}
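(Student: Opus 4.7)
The plan is to build $f$ pants-by-pants from a model map, with small shear corrections on cuff collars. First I would fix the target $M_0'$: give it the same topological pants decomposition as $M_0$, but with reduced Fenchel-Nielsen coordinates $\bigl(\mathbf{hl}'(C),\mathbf{s}'(C)\bigr)=(R,1)$ on every cuff. By construction $M_0'$ carries a $(0,R)$-pants decomposition, so the task reduces to producing the quasiconformal map.

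The core geometric step is, for each pair of pants $\Pi$ in the decomposition of $M_0$, to construct a quasiconformal homeomorphism $\phi_\Pi\colon \Pi\to \Pi_R$ whose Beltrami coefficient has $L^\infty$ norm at most $c_1\epsilon$ and which sends each boundary geodesic $C_i$ to the corresponding boundary of $\Pi_R$ by the arclength-affine map of scaling factor $R/\mathbf{hl}(C_i)$. A concrete realization is to cut $\Pi$ along its three seams into two isometric right-angled hexagons, each determined by its alternating side lengths $\bigl(\mathbf{hl}(C_1),\mathbf{hl}(C_2),\mathbf{hl}(C_3)\bigr)$. Since this triple differs from $(R,R,R)$ by $O(\epsilon)$, one can produce in Fermi coordinates a smooth diffeomorphism between the corresponding hexagons whose complex dilatation is $O(\epsilon)$, then double across the seams to obtain $\phi_\Pi$ on all of $\Pi$.

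Next I would take care of the gluing. The maps $\phi_\Pi$ on adjacent pants produce two parametrizations of each common cuff $C$ that agree up to a rigid rotation along $C$ whose displacement equals the twist discrepancy $\mathbf{s}(C)-1$, bounded by $\epsilon/R$ by hypothesis. Pick a definite-width collar $U_C$ around $C$ in the target and precompose one of the $\phi_\Pi$ with the linear shear that interpolates between the identity and the rotation by $\mathbf{s}(C)-1$ across $U_C$. A direct computation shows that such a shear has complex dilatation of $L^\infty$ norm $O(\epsilon/R)$, which is even smaller than $c_1\epsilon$. Choosing the collars to be pairwise disjoint and patching the $\phi_\Pi$ with the shears yields a global homeomorphism $f\colon M_0\to M_0'$ with $\|\mu_f\|_\infty\leq c\epsilon$, hence $(1+c\epsilon)$-quasiconformal.

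The main obstacle is the quantitative estimate behind the pants step: showing that right-angled hexagons whose three alternating side lengths differ by $O(\epsilon)$ admit a diffeomorphism with complex dilatation $O(\epsilon)$, with the constant uniform in $R\geq \hat R$. One approach is to write the hyperbolic metric on such a hexagon explicitly in Fermi coordinates centered at a chosen cuff, observe that the metric coefficients depend smoothly on the three side lengths on the compact parameter range $|\mathbf{hl}(C_i)-R|<\hat\epsilon$, and build an interpolating diffeomorphism by straightforward linear interpolation whose derivative is manifestly close to an isometry. An alternative route is to invoke the Teichm\"uller comparison theorem on the three-parameter moduli space of right-angled hexagons together with Wolpert-type estimates, noting that the exponential decay of twist deformations in long collars keeps the Teichm\"uller-to-Fenchel-Nielsen Lipschitz constant bounded as $R\to\infty$. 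Combined with the collar shear estimate this gives the theorem.
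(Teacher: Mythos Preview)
The paper does not prove this theorem; it simply quotes it as \cite[Theorem 2.2]{KM-surface group} and uses it as a black box. So there is no ``paper's own proof'' to compare against, and your proposal is supplying an argument where the paper supplies only a citation.

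As for the proposal itself, the outline is the right shape (build a model map pants-by-pants, then shear in collars to match twists), and it is in the spirit of the original Kahn--Markovi\'c argument. The one place to be careful is the uniformity claim in your ``main obstacle'' paragraph: the parameter range $|\mathbf{hl}(C_i)-R|<\hat\epsilon$ is \emph{not} compact as $R\to\infty$, so ``metric coefficients depend smoothly on the side lengths on a compact parameter range'' does not directly give a constant independent of $R$. The hexagons degenerate (becoming long and thin), and a naive Fermi-coordinate interpolation can accumulate dilatation proportional to $R$ unless you exploit the exponential decay across the thin parts. Kahn--Markovi\'c handle this by working with complex Fenchel--Nielsen coordinates and showing the relevant holonomy/trace parameters vary holomorphically, which gives the $O(\epsilon)$ bound uniformly; your ``alternative route'' via Wolpert-type decay is closer to what is actually needed than the first approach you sketch.
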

		\begin{corollary}\label{coverdeg}
			Suppose $M_0,N_0$ are two Riemann surfaces with $(\frac{\epsilon}{3c},R)$-good pants decomposition. There exist finite covers $M_\epsilon\rightarrow M_0$ and $N_\epsilon\rightarrow N_0$ such that  $d_{T}(M_\epsilon,N_\epsilon)\leq \epsilon$, and the following holds
			\begin{equation*}
				\begin{aligned}
					deg(M_\epsilon\rightarrow M_0)\leq -3\chi(N_0)&\text{, and }&deg(N_\epsilon\rightarrow N_0)\leq -3\chi(M_0),
				\end{aligned}
			\end{equation*}
			where $\chi(M_0)$ and $\chi(N_0)$ are Euler characteristic number of the corresponding surface.
		\end{corollary}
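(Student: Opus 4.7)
The plan is to push both $M_0$ and $N_0$ to nearby Riemann surfaces with exact $(0,R)$-pants decomposition via Theorem \ref{criterion}, build a common finite cover of these straightened models using that they both cover $O_R$, and then pull this cover back through the quasiconformal maps. Concretely, I would first apply Theorem \ref{criterion} with $\epsilon/(3c)$ in place of $\epsilon$ to each of $M_0$ and $N_0$, obtaining Riemann surfaces $M_0'$ and $N_0'$ with $(0,R)$-pants decomposition together with $(1+\epsilon/3)$-quasiconformal homeomorphisms $f \colon M_0 \to M_0'$ and $g \colon N_0 \to N_0'$.

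Next, since both $M_0'$ and $N_0'$ are finite covers of the orbifold $O_R$, their fundamental groups $H_M$ and $H_N$ sit as finite-index subgroups of $\pi_1^{\mathrm{orb}}(O_R)$, and $H_M \cap H_N$ corresponds to a common finite cover $\tilde S \to M_0'$ of degree at most $[\pi_1^{\mathrm{orb}}(O_R) : H_N]$ and $\tilde S \to N_0'$ of degree at most $[\pi_1^{\mathrm{orb}}(O_R) : H_M]$. Because $f$ is a homeomorphism, the subgroup of $\pi_1(M_0')$ defining $\tilde S \to M_0'$ pulls back along $f_*$ to a subgroup of $\pi_1(M_0)$ of the same index, producing a finite cover $M_\epsilon \to M_0$ of the same degree together with a lift $\tilde f \colon M_\epsilon \to \tilde S$ that remains $(1+\epsilon/3)$-quasiconformal; the symmetric construction yields $\tilde g \colon N_\epsilon \to \tilde S$. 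The composition $\tilde g^{-1} \circ \tilde f \colon M_\epsilon \to N_\epsilon$ is then $(1+\epsilon/3)^2$-quasiconformal, so $d_T(M_\epsilon, N_\epsilon) \leq 2\log(1+\epsilon/3) \leq 2\epsilon/3 \leq \epsilon$.

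It remains to match the stated degree bounds. By the orbifold Riemann-Hurwitz formula one has $[\pi_1^{\mathrm{orb}}(O_R) : H_N] = -\chi(N_0)/|\chi(O_R)|$, so the desired bound $-3\chi(N_0)$, and symmetrically $-3\chi(M_0)$, is equivalent to $|\chi(O_R)| \geq 1/3$. Since $\chi(S_R) = -2$, this reduces to showing $|\mathrm{Aut}(S_R)| \leq 6$. I expect this last point to be the main bookkeeping obstacle: one must identify the orientation-preserving isometry group of the specific genus-two surface glued from two copies of $\Pi_R$ with twist $1$ and check that no accidental symmetries appear for special values of $R$. The expected group is the dihedral $D_3$ of order $6$, generated by the cyclic permutation of the three cuffs and by an order-two rotation that swaps the two pants while fixing one cuff; this yields $|\chi(O_R)| = 1/3$ and closes the argument.
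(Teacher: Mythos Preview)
Your proposal is correct and follows essentially the same route as the paper: apply Theorem~\ref{criterion} to straighten $M_0,N_0$ to covers $M_0',N_0'$ of $O_R$, take the common cover corresponding to the intersection of the two subgroups of $\pi_1^{\mathrm{orb}}(O_R)$, lift the quasiconformal maps, and compose. The paper bounds the degree in exactly the same way via $[G_1:G_1\cap G_2]\le[G:G_2]$ and Riemann--Hurwitz, simply asserting $\chi(O_R)=-1/3$ without the verification of $|\mathrm{Aut}(S_R)|=6$ that you flag as the residual bookkeeping; your caution there is well placed but does not represent a departure from the paper's argument.
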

		\begin{proof}
			First, we construct the covering maps. According to Kahn-Markovi\'c's Criterion, there exist Riemann surfaces $M_0^\prime,N_0^\prime$, and $(1+\frac{\epsilon}{3})$-quasiconformal maps 
		\begin{equation*}
			\begin{aligned}
			f:M_0\rightarrow M_0^\prime& \text{, and }&g:N_0\rightarrow N_0^\prime	
			\end{aligned}
		\end{equation*}
		where $M_0^\prime,N_0^\prime$ are covers of $O_R$. Let $S$ be the minimal common cover of $M_0^\prime$ and $N_0^\prime$ (if $O_R=\mathbb H^2/G$, $M_0^\prime=\mathbb H^2/G_1$, and $N_0^\prime=\mathbb H^2/G_2$, then $S=\mathbb H^2/(G_1\cap G_2)$, where $G_1,G_2$ are subgroups of $G$). Let 
		\begin{equation*}
			\begin{aligned}
				\tilde f:M_\epsilon\rightarrow S&\text{, and }&\tilde g:N_\epsilon\rightarrow S
			\end{aligned}
		\end{equation*}
		 be the lifting of the maps $f$ and $g$ with respect to the covering maps $S\rightarrow M_0^\prime$ and $ S\rightarrow N_0^\prime$ respectively. Thus, $M_{\epsilon}$ and $N_{\epsilon}$ are coverings of $M_0$ and $N_0$ respectively. The lifted maps $\tilde f$ and $\tilde g$ are also $(1+\frac{\epsilon}{3})$-quasiconformal. Let $h=(\tilde g)^{-1}\circ\tilde f:M_\epsilon\rightarrow N_\epsilon$. The map $h$ is a $(1+\epsilon)$-quasiconformal map. Therefore, $d_{T}(M_\epsilon,N_\epsilon)\leq \epsilon$.\par 
		  By construction $deg(M_\epsilon\rightarrow M_0)=deg(S\rightarrow M_0^\prime)$. Meanwhile,
			\begin{equation*}
				\begin{aligned}
					deg(S\rightarrow M_0^\prime)\leq deg(N_0^\prime\rightarrow O_R)=\frac{\chi(N^\prime_0)}{\chi(O_R)}=-3\chi(N^\prime_0)
				\end{aligned}
			\end{equation*}
			where the inequality comes from the group theoretic fact 
			\[[G_1:G_1\cap G_2]\leq [G:G_2].\]
			This implies the estimate about $deg(M_\epsilon\rightarrow M_0)$. By applying the same argument, we obtain the estimate about $deg(N_\epsilon\rightarrow N_0)$.
		\end{proof}
		
		\subsection{Proof of Theorem \ref{main}} 
		Let $M,N$ be two closed hyperbolic Riemann surfaces. We are going to construct the covering maps $M_\epsilon\rightarrow M$ and $N_\epsilon\rightarrow N$ such that $d_{T}(M_\epsilon,N_\epsilon)\leq\epsilon$, and estimate the degree of the covering maps.\par 
		Let $q=\min\{q_M,q_N\}$, and $\epsilon_1=3c\min\{e^{-qR_M},e^{-qR_N},\epsilon_M,\epsilon_N\}$. Therefore, provided that $\epsilon_1\geq \epsilon>0$, and $R=-q^{-1}\log(\epsilon/3)$ , the constants $\frac{\epsilon}{3c}$ and $R$ satisfy the assumptions of Theorem \ref{main2} for both $M$ and $N$. Thus, there exist finite covers $M_0\rightarrow M$ and $N_0\rightarrow N$, both of degree at most $e^{50R}$, such that $M_0$ and $N_0$ admit $(\frac{\epsilon}{3c},R)$-good pants decomposition. \par  
		By applying Corollary \ref{coverdeg} to the surfaces $M_0$ and $N_0$, we obtain finite covers $M_\epsilon\rightarrow M_0$ and $N_\epsilon\rightarrow N_0$ such that $d_T(M_\epsilon,N_\epsilon)\leq \epsilon$. Now it suffices to estimate the degree of finite covers. By Corollary \ref{coverdeg}, we have the following inequality 
		\begin{equation*}
			\begin{aligned}
				deg(M_\epsilon\rightarrow M)&=deg(M_\epsilon\rightarrow M_0)deg(M_0\rightarrow M)\\
				&\leq -3\chi(N_0)deg(M_0\rightarrow M).
			\end{aligned}
		\end{equation*}
		 Since the degrees of $M_0\rightarrow M$ and $N_0\rightarrow N$ are at most $e^{50R}=(\epsilon/3)^{-50q^{-1}}$, we obtain 
		\[
		-3\chi(N_0)deg(M_0\rightarrow M)\leq C\epsilon^{-100q^{-1}}
		\]
		for some constant $C=C(M,N)$. Let $\epsilon_0=\min\{\epsilon_1,3^{-100}C^{-q}\}$. Combining the above two estimates, we obtain 
		\[deg(M_\epsilon\rightarrow M)\leq \epsilon^{-k}\] for $k=101q^{-1}$ and $\epsilon_0\geq \epsilon$. The same estimate holds for the covering map $N_\epsilon\rightarrow N$. This completes the proof.

		\subsection{Proof of Theorem \ref{optimal}} The only fact we need about the arithmetic of $M$ and $N$ is contained in the following lemma, which establishes that the union of the length spectrum of $M$ and the length spectrum of $N$ has exponentially small gap.
		\begin{lemma}\label{ari}
			Let $M$ and $N$ be two arithmetic closed Riemann surfaces with the same invariant trace field $k$. Suppose $l$ is the length of a geodesic in $M$, and $l^\prime$ is the length of a geodesic in $N$, and $l> l^\prime$. Then 
			\[
			l/l^\prime\geq 1+ ce^{-2l}
			\]
			for some constant $c$ depending only on $M,N$.
		\end{lemma}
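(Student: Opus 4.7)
The plan is to translate the hypothesis $l > l'$ into a Diophantine statement about algebraic integers in the invariant trace field $k$, and then to apply a Liouville-type inequality. If $\gamma$ represents a closed geodesic of length $l$ in the holonomy representation of $M$, then $\mathrm{tr}(\gamma) = \pm 2\cosh(l/2)$, and hence $\mathrm{tr}(\gamma^2) = \mathrm{tr}(\gamma)^2 - 2 = 2\cosh(l)$. By the very definition of the invariant trace field together with the arithmeticity of $M$, the number $x := 2\cosh(l)$ lies in $\mathcal{O}_k$; the same argument applied on the $N$ side, using that the invariant trace field is the \emph{same} field $k$, gives $x' := 2\cosh(l') \in \mathcal{O}_k$. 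Since $\cosh$ is strictly increasing on $[0,\infty)$ and $l > l'$, the element $x - x'$ is a nonzero algebraic integer in $k$.

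The next step is to control the Galois conjugates of $x - x'$. Each of $M$ and $N$ is commensurable with the norm-one group of an order in a quaternion algebra $B/k$ which is ramified at every archimedean place of $k$ except the one giving the uniformization. For any non-identity real embedding $\sigma : k \hookrightarrow \mathbb{R}$, the algebra $B \otimes_\sigma \mathbb{R}$ is therefore isomorphic to the Hamilton quaternions, whose norm-one group is $\mathrm{SU}(2)$; in particular, the trace of any element has absolute value at most $2$. Applying this to both $M$ and $N$ (whose quaternion algebras may differ but are both ramified at all non-identity archimedean places of $k$), one obtains $|\sigma(x)|,\ |\sigma(x')| \leq 2$, and therefore $|\sigma(x - x')| \leq 4$, for every $\sigma \neq \mathrm{id}$.

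Now the Liouville inequality produces the gap. Since $x - x' \in \mathcal{O}_k \setminus \{0\}$, one has $|N_{k/\mathbb{Q}}(x - x')| \geq 1$, and combining this with the bound on conjugates gives
\[
|x - x'| \;\geq\; \prod_{\sigma \neq \mathrm{id}} |\sigma(x - x')|^{-1} \;\geq\; 4^{-(d-1)},
\]
where $d = [k:\mathbb{Q}]$ depends only on $M$ and $N$. On the other hand, from $\cosh' = \sinh$ and the monotonicity of $\sinh$ on $[0,\infty)$,
\[
|x - x'| \;=\; 2\bigl(\cosh(l) - \cosh(l')\bigr) \;\leq\; 2\sinh(l)(l - l') \;\leq\; e^{l}(l - l').
\]
Combining the two estimates yields $l - l' \geq 4^{-(d-1)} e^{-l}$, and dividing by $l'$, which is bounded below by the systole of $N$, produces $l/l' \geq 1 + c\,e^{-2l}$ for a constant $c = c(M,N) > 0$. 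The main obstacle is the arithmetic input in the second paragraph: one must verify that even though $M$ and $N$ may come from different quaternion algebras, both algebras are ramified at every non-identity archimedean place of $k$, so that the uniform trace bound $|\sigma(x)| \leq 2$ holds on both surfaces simultaneously. Once that is in hand, the Diophantine and analytic estimates are routine.
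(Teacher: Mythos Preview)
Your proof is essentially the paper's own: with $\xi = x - x' = \mathrm{tr}(\gamma^2) - \mathrm{tr}((\gamma')^2) = 2\cosh l - 2\cosh l'$, the paper likewise invokes Takeuchi's characterization to see that $\xi$ is a nonzero algebraic integer in $k$ with bounded non-identity conjugates, applies $|N_{k/\mathbb Q}(\xi)|\geq 1$, and compares with the mean-value bound $|\xi|\leq 2\sinh(l)(l-l')$. One small slip in your last line: the systole lower bound on $l'$ points the wrong way; what you actually need (and what the paper uses implicitly in its estimate $e^{2l}(l/l'-1)\geq 2\sinh(l)(l-l')$) is $l' < l$ together with $l \le e^{l}$, which gives $(l-l')/l' > (l-l')/l \geq c_1 e^{-l}/l \geq c_1 e^{-2l}$.
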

		\begin{proof}
		Let $M=\mathbb H^2/G$, and $N=\mathbb H^2/G^\prime$ where $G,G^\prime$ are two Fuchsian groups in $PSL_2(\mathbb R)$. Let $A\in G$, and $A^\prime\in G^\prime$ be group elements representing the length-$l$ and the length-$l^\prime$ geodesics respectively. Thus $|tr(A)|=2\cosh(l/2)$, and $|tr(A^\prime)|=2\cosh(l^\prime/2)$. Let $\xi=tr(A^2)-tr((A^\prime)^2)$. We apply the trace formula $tr(A)^2-2=tr(A^2)$ to get the equation
		\[
		4\big(\cosh(l/2)^2-\cosh(l^\prime/2)^2\big)=\xi
		\]
		By elementary estimate and Mean Value Theorem, the following holds
		\[
		e^{2l}\left(l/l^\prime-1\right)\geq 2\sinh(l)(l-l^\prime)\geq |\xi| 
		\] 
		Next, we prove that $|\xi|$ is bigger than some constant depending only on $M,N$. Let $k\subset\mathbb R$ be the invariant trace field of $M$ and $N$. Let $\phi_1=id,\phi_2,\dots,\phi_d$ be all field embedding from $k$ to $\mathbb C$. The norm of the number $\xi$ is defined by
		\[
		n(\xi)=\prod_{i=1}^d\phi_i(\xi)
		\]
		According to \cite[Theorem 1]{Ari} the following two statements hold
		\begin{enumerate}
			\item the number $\xi\neq 0$ is an algebraic integer in $k$. Therefore $n(\xi)$ is an integer and $n(\xi)\geq 1$.
			\item there exists a constant $C=C(M,N)>0$ such that for any $A\in G\cup G^\prime$, and $\phi_i\neq id$ we have $|\phi_i(tr(A^2))|\leq C$. Thus $|\phi_i(\xi)|\leq 2C$ for any $i\neq 1$.
		\end{enumerate}
		Combining the above two statements, we get $|\xi|\geq (2C)^{1-d}$. 
	Let $c=(2C)^{1-d}$. This completes the proof.
		\end{proof}
		
		Let $M,N$ be two arithmetic closed Riemann surfaces with the same invariant trace field that are not commensurable. Let $M_\epsilon\rightarrow M$, and $N_\epsilon\rightarrow N$ be finite coverings such that there exists a $e^\epsilon$-quasiconformal map $f:M_\epsilon\rightarrow N_\epsilon$. We are going to establish a lower bound on the genus of $M_\epsilon$, which implies the claimed lower bound on the degrees of the coverings $M_\epsilon\rightarrow M$, and $N_\epsilon\rightarrow N$. \par 
		 For any closed curve $\gamma$, we denote $\len(\gamma)$ to be the length of the geodesic homotopic to $\gamma$. Since $M,N$ are not commensurable, the map $f$ is not homotopic to a conformal map. Thus by \cite[Theorem 1.1]{CN} there exists a closed curve $\gamma$ in $M_\epsilon$ such that $\len(\gamma)\neq\len(f(\gamma))$, and $\len(\gamma)\leq a\log g$ for some constant $a$ depending only on $M$, and $g=g(M_\epsilon)$ the genus of the surface $M_\epsilon$. 
		
		 \par Let $l=\len(\gamma)$ and $l^\prime=\len(f(\gamma))$. Without loss of generality, we assume $l> l^\prime$. We apply Wolpert's Lemma \cite[Lemma 3.1]{W} or \cite[Corollary 5.4]{EK-dome} to get $e^{\epsilon}\geq l/l^\prime$, and apply Lemma \ref{ari} to get $l/l^\prime\geq 1+ce^{-2l}$ where $c=c(M,N)$ is the constant from the lemma. Since $l\leq a\log g$, by combining the above three estimates, we obtain
		 \begin{equation*}
		 	\begin{aligned}
		 		e^\epsilon\geq 1+cg^{-2a}&\text{, or equivalently }&g\geq \frac{c^{1/2a}}{(e^\epsilon-1)^{1/2a}} 
		 	\end{aligned}
		 \end{equation*}
		Let $k_0=1/4a$, and $\epsilon_0=\min\{1,c^2(e-1)^2\}$. Thus, we obtain $g\geq \epsilon^{-k_0}$ when $\epsilon_0\geq \epsilon$. This completes the proof.
		\subsection{Outline of the paper}
		It remains to prove Theorem \ref{main2}. That is, for any suitable $\epsilon,R$ we need to construct a finite covering $M_{\epsilon,R}\rightarrow M$ that admits a $(\epsilon,R)$-good pants decomposition. The covering map is constructed by gluing a family of immersed pants in $M$. Let $\Gamma_{\epsilon,R}$ be the set of  geodesic curves that have half length $\epsilon$-close to $R$. Let $\PPi_{\epsilon,R}$ be the set of orientation-preserving immersed pair of pants that have half cuff lengths $\epsilon$-close to $R$.

		\begin{enumerate}
			\item In Section 2, we construct the needed covering map by assuming the Good Pants Homology Theorem.  Let $\mu_1$ be the family of pants  that is the formal sum of all good pants in $\PPi_{\epsilon,R}$ with unit coefficients. We will apply the Good Pants Homology Theorem to correct the multi-pants $\mu_1$ such that the corrected multi-pants $\mu$ can be glued to form the needed covering map. The degree of the constructed covering map is then bounded by the number of pants in $\mu$. This completes the proof of Theorem \ref{main2}.
		\end{enumerate}
		The Good Pants Homology Theorem is the central technical contribution of the paper, which provides an effective version of \cite[Theorem 3.4]{KM-Correction} (the precise meaning of this effectiveness is discussed in Subsection 2.4). Let 
		\[
		\partial:\PPi_{\epsilon,R}\rightarrow\mathbb Z\Gamma_{\epsilon,R}
		\]
		be the boundary map. Briefly stated, the theorem wants to find a semirandom map 
		\[
		\Phi:\Gamma_{\epsilon,R}\rightarrow\mathbb Q\PPi_{\epsilon,R}
		\]
		such that $\partial\Phi(\nu)=\nu$ for any homologically trivial good multi-curve $\nu\in\mathbb Z\Gamma_{\epsilon,R}$. We will prove the Good Pants Homology Theorem by constructing the needed map $\Phi$ explicitly. The proof occupies the text of Sections 3-7.
		
		\begin{enumerate}\setcounter{enumi}{1} 
			\item In Sections 3 and 4, we review hyperbolic geometry and the theory of semirandom maps, respectively.  The basic tools in Section 3 will support the geometric side of the construction of the map $\Phi$. To verify that the constructed map $\Phi$ is semirandom, we will estimate the semirandom norm at each step of the construction.
			\item Sections 5 and 6 are devoted to establishing the connection between the group homology and the good pants homology. Let $G$ be the fundamental group of the surface $M$. We construct maps $\mathcal R_G$ and $\mathcal R_{G\times G}$ defined on subsets of $G$ and $G\times G$, respectively, such that the following diagram commutes.
				\[\begin{tikzcd}
		\mathbb Z(G\times G)\arrow[d,"\partial"] \arrow[r,"\mathcal{R}_{G\times G}"]& \mathbb Q\PPi_{\epsilon,R}\arrow[d,"\partial"]\\
		\mathbb ZG\arrow[r,"\mathcal R_G"]&\mathbb Q\Gamma_{\epsilon,R}
	\end{tikzcd}\]
			 This is established in the Group-to-Pants Theorem.
			\item In Section 7, we prove the Good Pants Homology Theorem by first encoding good curves in group homology, then applying the Group-to-Pants Theorem.
		\end{enumerate}

	\section{Constructing covers of a Riemann surface}
		In this section, we prove Theorem \ref{main2} by assuming the Good Pants Homology Theorem (Theorem \ref{good pants}), which is proved in Section 7. Let $M$ be a closed hyperbolic Riemann surface. For given suitable $\epsilon$ and $R$, we construct a covering $M_{\epsilon,R}\rightarrow M$ that admits a $(\epsilon,R)$-good pants decomposition. This covering is constructed by gluing good pants from $M$. Consequently, the degree of the covering is bounded above by the number of such pants employed.\par 
		This section proceeds as follows. 
		\begin{enumerate}
			\item In Subsection 2.1, we establish the following criterion: an evenly distributed and equidistributed family of pants can be glued to form a covering that admits a good pants decomposition.
			\item  Subsections 2.2 and 2.3 then show that the family of pants taking every good pants exactly once is equidistributed and almost evenly distributed. This is established in Theorem \ref{equidistribution}, which is a more quantitive version of \cite[Theorem 3.1]{KM-Correction}. 
			\item In Subsection 2.4, we introduce the Good Pants Homology Theorem and correct (by applying the theorem) the above family of pants to the one that is evenly distributed. In particular, we explain the difference between our Good Pants Homology Theorem and Kahn-Markovic's.
			\item  In Subsection 2.5, we prove that the corrected multi-pants meets the criterion, thereby completing the construction and the proof of Theorem \ref{main2}.
		\end{enumerate}

	 \subsection{Pants in a Riemann surface}
	 In this subsection, we introduce basic definitions and notations about immersed pair of pants in a Riemann surface $M$. Then prove the Covering Constructing Lemma, which is the criterion mentioned at the beginning of the section.
	 \par Let $\gamma$ be a geodesic arc or a closed geodesic arc. By $\len(\gamma)$ we denote the length of $\gamma$. For any geodesic arc $\alpha$, by $i(\alpha)$ we denote the unit initial vector of $\alpha$ and by $t(\alpha)$ we denote the unit terminal vector.
	 \par 
	 Let $\Pi_0$ be a fixed topological pair of pants. By $C_1,C_2,C_3$ we denote three boundaries of $\Pi_0$, called cuffs. By $S_i$ we denote the simple arcs $S_i:[0,1]\rightarrow\Pi_0$ connecting $C_i,C_{i+1}$ with $S_i(0)\in C_i$.  An immersed pair of pants in $M$ is an immersing map 
	 	\[
		\Pi:\Pi_0\rightarrow M\]
		 up to homotopy and the action of $MCG(\Pi_0)$ such that $\Pi(C_i)$ is the geodesic curve and $\Pi(S_i)$ is the minimal length geodesic arc for any $i=1,2,3$. 
		 \par For any oriented closed curve $\gamma$, by $\sqrt{\gamma}$ we denote the set of all pairs of antipodal points in $\gamma$ and denote $\mathbf{hl}(\gamma)=\len(\gamma)/2$. By $N^1(\gamma)$ we denote the unit normal bundle of $\gamma$. By  $N^1(\sqrt{\gamma})$ we denote the set of all pairs of unit normal vectors of $\gamma$ supported at two antipodal points and on the same side of $\gamma$. Thus $N^1(\sqrt{\gamma})$ has two connected components, the right side $N^1_+(\sqrt{\gamma})$ and the left side $N^1_-(\sqrt{\gamma})$. Suppose that $\gamma=\Pi(C_1)$ is a boundary of the immersed pair of pants. We define the foot of $\Pi$ on $\gamma$ to be
		 \[
		 Ft_\gamma(\Pi)=\{i(S_1),-t( S_2)\}\in N^1_+(\sqrt{\gamma})
		 \]
		 \begin{definition}\label{twist}
		 	We identify $\sqrt{\gamma}$ with the Euclidean tori $\mathbb R/\mathbf{hl}(\gamma)\mathbb Z$ as a measured metric space. Suppose that $\Pi_1$ is an immersed pair of pants that has a boundary $\gamma$ and $\Pi_2$ is an immersed pair of pants that has a boundary $\overline\gamma$. Thus $Ft_\gamma(\Pi_1)\in N^1_+(\sqrt{\gamma})$ and $Ft_{\overline\gamma}(\Pi_2)\in N^1_-(\sqrt{\gamma})$. Let $a$ be the support point of $Ft(\Pi_1)$ at $\sqrt{\gamma}$ and $b$ be the support point of $Ft(\Pi_2)$. Then their twist parameter $\mathbf s$ is defined by
		 	\[
		 	\mathbf s_\gamma(\Pi_1,\Pi_2)=\len(\gamma_{ab})
		 	\] where $\gamma_{ab}$ is the subarc of $\sqrt{\gamma}$ from $a$ to $b$.
		 \end{definition} 
		 By $\Gamma$ we denote the space of all oriented closed geodesics in $M$. By $\PPi$ we denote the space of all immersed pair of pants in $M$.  By  $N^1(\sqrt{\Gamma})$ we denote the disjoint union of all $N^1(\sqrt{\gamma})$. Let $\mathbb Z\Gamma$ be the set of all formal sums of curves, identifying $\overline\gamma$ with $-\gamma$. We have the boundary map
		\begin{equation*}
			\begin{aligned}
				\partial :\PPi\rightarrow\mathbb Z\Gamma&\text{, defined by}&\Pi\mapsto\sum_{i=1}^3\Pi(C_i).
			\end{aligned}
		\end{equation*}
		and the following finer boundary map
		\begin{equation*}
			\begin{aligned}
				\hat\partial :\PPi\rightarrow\mathbb ZN^1\left(\sqrt{\Gamma}\right)&\text{, defined by}&\Pi\mapsto\sum_{i=1}^3\bigg(\Pi(C_i),Ft_i(\Pi)\bigg)
			\end{aligned}
		\end{equation*}
		A family of immersed pair of pants can be considered as a multi-pants $\mu\in\mathbb Z^+\PPi$. For any multi-pants $\mu\in \mathbb Z\PPi$ represented by $
			\mu=\sum\Pi_n$, by $\partial \mu$ we denote the linearly extension of the boundary map. Namely, we define $\partial \mu=\sum\partial \Pi_n\in \mathbb Z\Gamma$. For any geodesic $\gamma$ and multi-pants $\mu$, by $\hat\partial _\gamma(\mu)$ we denote the counting measure on $N^1_+\left(\sqrt{\gamma}\right)$ with respect to the finite subset $\hat\partial (\mu)|_{N^1_+\left(\sqrt{\gamma}\right)}$(counting multiplicity). This measure characterizes the distribution of pants in the multi-pants $\mu$ that has a boundary $\gamma$. \par 
			Let $\Gamma_{\epsilon,R}$ be the subset of $\Gamma$ consisting of curves satisfying that
		 \[
		 \left|\mathbf{hl}(\gamma)-R\right|\leq\epsilon
		 \]By $\PPi_{\epsilon,R}$ we denote the space of all immersed pair of pants such that the half length of their boundaries is  $\epsilon$ close to $R$.\par 
		 Before stating the Covering Constructing Lemma, we recall the definition of equivalent measures. Let $X$ be a metric space. Let $\mu,\nu$ be two measures such that $\mu(X)=\nu(X)$ and let $\delta>0$ be a constant. Suppose that for every Borel set $A\subset X$ we have $\mu(A)\leq\nu(N_\delta(A))$. Then we say that $\mu$ and $\nu$ are $\delta$-equivalent measures.
		\begin{lemma}[Covering Constructing]\label{glue-crit}
			 Suppose that a multi-pants $\mu\in\mathbb Z^+\PPi_{\epsilon,R}$ satisfies following conditions
			\begin{enumerate}
				\item (Evenly distributed) $\partial \mu=0\in\mathbb Z\Gamma_{\epsilon,R}$.
				\item (Equidistributed) for any $\gamma\in\Gamma_{\epsilon,R}$, the measure $\hat\partial _\gamma(\mu)$ on $N^1_+\left(\sqrt{\gamma}\right)$ is $\frac{\epsilon}{2R}$-equivalent to an Euclidean measure.
			\end{enumerate}
			Then one can glue these multi-pants to get a cover $M_{\epsilon,R}\rightarrow M$ that admits a $(\epsilon,R)$-good pants decomposition.
		\end{lemma}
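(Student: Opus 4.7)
The plan is to construct $M_{\epsilon,R}$ by pairwise-gluing the immersed pants of $\mu$ along their shared boundary curves, then verify that the resulting closed surface is a cover of $M$ carrying an $(\epsilon,R)$-good pants decomposition. I will proceed in three steps: (i) for each $\gamma\in\Gamma_{\epsilon,R}$, produce a bijection between the pants of $\mu$ with boundary $\gamma$ and those with boundary $\overline\gamma$ whose post-gluing twist parameter is within $\epsilon/R$ of $1$; (ii) perform the gluings and check that the pantswise map to $M$ is a covering; (iii) read off that the cuffs form an $(\epsilon,R)$-good pants decomposition.

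For step (i), I fix $\gamma$ and identify both $N^1_+(\sqrt\gamma)$ and $N^1_-(\sqrt\gamma)$ with the torus $\sqrt\gamma\cong\mathbb R/\mathbf{hl}(\gamma)\mathbb Z$. The measures $\hat\partial_\gamma(\mu)$ and $\hat\partial_{\overline\gamma}(\mu)$ then become counting measures on this torus of equal total mass (by hypothesis (1), since $\partial\mu=0$ identifies $\overline\gamma$ with $-\gamma$), each $\tfrac{\epsilon}{2R}$-equivalent to the translation-invariant measure of that total mass (hypothesis (2)). Since Euclidean measure on a circle is translation-invariant, shifting one of them by distance $1$ preserves $\tfrac{\epsilon}{2R}$-equivalence, so the two shifted equal-mass measures are $\epsilon/R$-equivalent to each other. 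I then apply Hall's marriage theorem to the bipartite graph joining points of one to points of the other at distance at most $\epsilon/R$: for a finite subset $S$ of $+$-points, Hall's condition $|S|\leq |N_{\epsilon/R}(S)\cap(\text{$-$-points})|$ is exactly the $\epsilon/R$-equivalence applied to $S$ viewed as an atomic Borel set. Undoing the shift, each matched pair has feet on $\sqrt\gamma$ at arc-distance within $\epsilon/R$ of $1$, which by Definition \ref{twist} is the desired twist parameter.

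For step (ii), the matchings identify the cuffs of the pants of $\mu$ in pairs, yielding a closed topological surface $M_{\epsilon,R}$. Gluing along geodesic cuffs of equal length produces a smooth hyperbolic metric, and the given immersions assemble into a continuous map $M_{\epsilon,R}\to M$: across any glued cuff the two immersions already agree as isometric parameterizations of the geodesic $\gamma\subset M$, so the global map is a local isometry, hence a covering of $M$. For step (iii) the cuffs form a pants decomposition $\mathcal C$ of $M_{\epsilon,R}$, and $|\mathbf{hl}(C)-R|<\epsilon$ since $\mu\subset\PPi_{\epsilon,R}$, while $|\mathbf s(C)-1|<\epsilon/R$ by step (i); so $\mathcal C$ is $(\epsilon,R)$-good.

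The main obstacle is the matching assertion in step (i): converting $\delta$-equivalence of measures into Hall's condition for a bipartite matching with short edges. The verification above is clean, but one must be careful to apply $\delta$-equivalence to atomic sets and to observe that the translation by $1$ does not degrade the equivalence constant. The remaining steps are standard surgery and deck-theory considerations.
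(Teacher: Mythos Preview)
Your proposal is correct and follows essentially the same route as the paper: use condition (1) to equate the total masses on $\gamma$ and $\overline\gamma$, translate one foot measure by $1$ and combine the two $\tfrac{\epsilon}{2R}$-equivalences to a single $\tfrac{\epsilon}{R}$-equivalence, invoke Hall's marriage theorem to produce the matching with $|\mathbf s_\gamma-1|<\epsilon/R$, then glue. The paper's proof is slightly terser on why the gluing yields a genuine covering, but the argument is the same.
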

		\begin{proof}
			Let $\gamma$ be an oriented closed geodesic. Let $\iota$ be the distance of one translation on the Euclidean tori $\sqrt{\gamma}$. Since the Euclidean measure is invariant under the action of $\iota$, the equidistributed property implies that $\iota_*\hat\partial _\gamma\mu$ is $\frac{\epsilon}{2R}$-equivalent to $K_\gamma d\lambda_\gamma$ for some constant $K_\gamma$ where $d\lambda_\gamma$ is the Lebesgue measure on $\sqrt{\gamma}$. On the other hand, the measure $\hat\partial _{\overline\gamma}\mu$ is equivalent to the Euclidean measure $K_{\overline\gamma}d\lambda_\gamma$ for some constant $K_{\overline\gamma}$. The evenly distributed property implies that $K_\gamma=K_{\overline\gamma}$. Therefore, the measure $\iota_*\hat\partial _\gamma\mu$ is $\frac{\epsilon}{R}$-equivalent to the measure $\hat\partial _{\overline\gamma}\mu$. By Hall's Marriage Theorem, see \cite[Theorem 3.2]{KM-surface group}, there exists a pairing $
			\sigma_\gamma$ between the foots on $\hat\partial _\gamma\mu$ and the foots on $\hat\partial _{\overline\gamma}\mu$ such that, for any $\Pi\in\mu$ that has a boundary $\gamma$, we have
			\begin{equation}\label{E00}
				\left|\mathbf s_\gamma\left(\Pi,\sigma_\gamma\Pi\right)-1\right|\leq\frac{\epsilon}{R}
			\end{equation}
			\par We construct the needed cover by gluing a pair of pants $\Pi$ in the multi-pants $\mu$ with the pair of pants $\sigma_\gamma\Pi$ along the boundary $\gamma$. This obviously gives a covering mapping. Since each gluing block satisfies the half length condition and the gluing $\sigma$ satisfies the inequality (\ref{E00}), the cover satisfies the conditions from the good pants decomposition.
		\end{proof}

	\subsection{Generating immersed pants}
	In this subsection, we study how to construct immersed pair of pants. The construction is called the $\theta$-graph construction. As a rapid application, we encode each pair of pants by the third connection whose distribution is well understood.
	\par  Let $p,q$ be two points in $M$, and let $\alpha_1,\alpha_2,\alpha_3$ be three geodesic arcs starting at $p$ and ending at $q$. The union of three geodesic arcs is called a $\theta$-graph if the triples of unit vectors $(i(\alpha_1),i(\alpha_2),i(\alpha_3))$ and $(t(\alpha_1),t(\alpha_2),t(\alpha_3))$ have opposite cyclic ordering with respect to the orientation on $T^1M_p$ and $T^1M_q$. A $\theta$-graph in $M$ is homotopic to a unique immersed pair of pants in $M$. The following proposition is a special case of the above $\theta$-graph construction. 

	\begin{proposition}[Third Connection Construction]
		Let $\gamma$ be a closed geodesic curve and $
	\eta$ be an orthogeodesic arc connecting the right side of $\gamma$ and right side of $\gamma$. Then there exists a unique immersed pair of pants that is homotopic to the immersed graph $\gamma\cup\eta$ in $M$, denoted by $Thd(\gamma,\eta)$.
	
	\end{proposition}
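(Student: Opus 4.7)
The plan is to exhibit $\gamma \cup \eta$ as a $\theta$-graph whose two vertices are the feet of $\eta$ on $\gamma$, and then invoke the $\theta$-graph construction recalled immediately above the statement.

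First I would let $p$ and $q$ be the two endpoints of $\eta$ on $\gamma$. These points split $\gamma$ into two subarcs, each of which is itself a geodesic arc; orienting each of them from $p$ to $q$ produces geodesic arcs $\alpha_1, \alpha_2$. Together with $\eta$, which is itself a geodesic arc from $p$ to $q$, one obtains three geodesic arcs joining $p$ and $q$ whose union equals $\gamma \cup \eta$ as an immersed graph.

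Next I would verify the opposite-cyclic-ordering condition in the definition of $\theta$-graph. Label so that $i(\alpha_1)$ is the forward tangent of $\gamma$ at $p$; then $i(\alpha_2) = -i(\alpha_1)$, and $i(\eta)$ is the unit normal to $\gamma$ at $p$ pointing into the right side, by hypothesis on $\eta$. In the orientation on $T^1 M_p$, these three vectors occupy the positions forward, backward, right, so the counterclockwise cyclic order is $(i(\alpha_1), i(\alpha_2), i(\eta))$. At $q$, the vectors $t(\alpha_1), t(\alpha_2)$ are again antipodal along $\gamma$ with $t(\alpha_1)$ the forward tangent, but because $\eta$ arrives at $q$ from the right side of $\gamma$, its terminal velocity $t(\eta)$ is the unit normal pointing to the \emph{left} of $\gamma$ at $q$ (it is the velocity of $\eta$ crossing from the right side toward $\gamma$, hence into the left half-plane). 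The counterclockwise cyclic order at $q$ is therefore $(t(\alpha_1), t(\eta), t(\alpha_2))$, which is opposite to the one at $p$, as required.

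With the $\theta$-graph condition verified, the general statement recalled just above—that every $\theta$-graph in $M$ is homotopic to a unique immersed pair of pants—produces the desired $Thd(\gamma, \eta)$. The labeling ambiguity between $\alpha_1$ and $\alpha_2$ does not affect the outcome, since swapping the two reverses the cyclic orderings at both $p$ and $q$ simultaneously and so preserves the opposite-ordering relation. The main (and only substantive) obstacle is the orientation bookkeeping at $q$: correctly identifying that $t(\eta)$ points to the left of $\gamma$ precisely because $\eta$ approaches $q$ from the right side. Once this is in hand, the proposition follows immediately from the quoted $\theta$-graph principle.
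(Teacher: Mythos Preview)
Your proposal is correct and follows exactly the approach the paper intends: the paper does not give a proof but simply declares the proposition to be ``a special case of the above $\theta$-graph construction,'' and you have supplied the verification that $\gamma\cup\eta$ is indeed a $\theta$-graph, including the orientation check that the paper leaves implicit. Your bookkeeping at $q$ is right: since $\eta$ arrives from the right side, $t(\eta)$ points to the left of $\gamma$, and this is what flips the cyclic order.
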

	
	By $\PPi_\gamma$ we denote the space of all immersed pants that have $\gamma$ as a boundary. By $Conn_\gamma$ we denote the space of all orthogeodesics connecting the right side of $\gamma$ and the right side of $\gamma$. The third connection construction defines a map
	\[
	Thd_\gamma:Conn_\gamma\rightarrow\PPi_\gamma.
	\]
	 Conversely, for any oriented closed geodesic $\gamma$ and any pair of pants $\Pi\in\PPi_\gamma$, the third connection $\eta$ is the unique separating arc in $\Pi$ connecting $\gamma$ and $\gamma$ itself with minimal length. Therefore, the third connection map is a bijective map for any $\gamma$. 
	 \par Next, we investigate the distribution of the third connections. Let $A,B$ be two oriented geodesic segments on $M$, and let $0<L_0<L_1$. We define
	\[
	Conn_{A,B}(L_0,L_1)
	\]
	to be the space of all orthogeodesic connecting the right side of $A$ and the right side of $B$ such that
	 $len(\eta)\in [L_0,L_1]$. The following Connecting Principle was proved in \cite[Theorem 11.3]{KM-Correction} by applying the exponential mixing property of the geodesic flow on $M$. 
		
	 \begin{connecting}
		There exist constants $q_0=q_0(M)$ satisfying the following statement. For any $L>0$, let $\delta=\exp(-q_0L)$. Suppose that $len(A)=len(B)=\delta^2$, then
		\begin{equation}\label{Connecting Principle}
			\#Conn_{A,B}(L,L+\delta^2)=E_M\exp(L)\delta^6\big(1+O(\delta)\big).
		\end{equation}
		where $E_M$ and $O$ depending only on $M$. Moreover, the constant $q_0$ satisfies that $q_0=q/40$ where $q$ is the exponential mixing constant of the geodesic flow on $M$, see \cite[Theorem 11.3]{KM-Correction}. Furthermore, the mixing constant $q$ satisfies that $q= C^\prime\lambda_1(M)$ for some universal constant $C^\prime$, see \cite[Proposition 5.1]{Mo}. Combining the above two results, we obtain
		\[
		q_0=C\lambda_1(M)
		\]
		for some universal constant $C$.
	\end{connecting}
	 The following corollary follows directly from Connecting Principle.
	 
	\begin{corollary}
		\label{testimate}
	Let $q_0,E_M$ be the constants from Connecting Principle. For any $W>0$, let $\delta=e^{-q_0W}$. Thus, for any region
	 \[\Omega\subset\gamma\times\gamma\times [W,\infty),\]
	 we have
	 \begin{equation*}
	 	\begin{aligned}
	 		\#\Big(Conn_{\gamma}\cap \Omega\Big)=&\left(\int_\Omega E_M\exp(w)dwd\lambda_\gamma^2\pm\int_{N_{3\delta^2}(\partial \Omega)}E_M\exp(w)dwd\lambda_\gamma^2\right)\\
	 		&\cdot\big(1+O(\delta)\big)
	 	\end{aligned}
	 \end{equation*}
	 where  $O$ only depends on $M$.
	\end{corollary}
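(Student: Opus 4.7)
The strategy is a Riemann-sum argument: partition $\Omega$ into small boxes, apply the Connecting Principle locally on each box, then aggregate. Throughout I fix $\delta=e^{-q_0W}$ as in the statement.

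First, I would tile $\gamma\times\gamma\times[W,\infty)$ by boxes $B=A_x\times A_y\times[L,L+\delta^2]$, where $A_x,A_y\subset\gamma$ are arcs of length $\delta^2$ and $L$ runs over a $\delta^2$-spaced grid starting at $W$. Since $L\geq W$ implies $e^{-q_0L}\leq\delta$, the Connecting Principle (applied at the natural scale $e^{-q_0L}$ and summed over the sub-boxes that tile $B$ at that scale) gives
\[
\#\bigl(Conn_\gamma\cap B\bigr)=E_M\,e^L\,\delta^6\bigl(1+O(\delta)\bigr),
\]
with a uniform implied constant. On the other hand, a direct computation yields $\int_B E_M\exp(w)\,dw\,d\lambda_\gamma^2=E_M\,e^L\,\delta^6\bigl(1+O(\delta^2)\bigr)$, so the local count and the local integral agree up to a multiplicative factor $1+O(\delta)$.

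Next, I would classify each box meeting $\Omega$ as \emph{interior} (contained in $\Omega$) or \emph{boundary} (meeting both $\Omega$ and its complement). Since a box has diameter at most $\sqrt{3}\,\delta^2<3\delta^2$, every boundary box is contained in $N_{3\delta^2}(\partial\Omega)$, and any point of $\Omega$ lies in either an interior or a boundary box. This yields
\[
\sum_{B\text{ interior}}\#\bigl(Conn_\gamma\cap B\bigr)\leq \#\bigl(Conn_\gamma\cap\Omega\bigr)\leq \sum_{B\text{ interior}}\#\bigl(Conn_\gamma\cap B\bigr)+\sum_{B\text{ boundary}}\#\bigl(Conn_\gamma\cap B\bigr).
\]
Converting each sum to the corresponding integral via the local estimate, and using $\int_\Omega-\int_{N_{3\delta^2}(\partial\Omega)}\leq\int_{\bigcup\text{interior}}\leq\int_\Omega$ together with $\int_{\bigcup\text{boundary}}\leq\int_{N_{3\delta^2}(\partial\Omega)}$ (integrand $E_M\exp(w)$ throughout), delivers the two-sided bound claimed in the corollary.

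The main technical point is arranging for the Connecting Principle to apply uniformly at the fixed scale $\delta=e^{-q_0W}$, whereas the principle is stated at the scale $e^{-q_0L}$ depending on the current length $L$. One handles this either by refining the partition at each height $L$ into natural-scale sub-boxes (whose per-box counts sum to the same $E_M\,e^L\,\delta^6(1+O(\delta))$) or by invoking a mildly strengthened version. Aggregating the multiplicative $1+O(\delta)$ error across the partition is then routine, because the error is uniform in $L$ and the estimates combine multiplicatively rather than additively.
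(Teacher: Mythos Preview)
Your proposal is correct and is precisely the argument the paper has in mind: the paper states only that the corollary ``follows directly from Connecting Principle,'' and the standard way to unpack that is exactly your box-partition Riemann-sum argument, including your handling of the scale mismatch between $e^{-q_0W}$ and $e^{-q_0L}$ by sub-tiling at the natural scale.
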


	\subsection{The Distribution of good pants}
	In this subsection, we prove that the multi-pants $\mu_1\in \mathbb Z\PPi_{\epsilon,R}$ that assigns to each pants the value 1 is equidistributed and almost evenly distributed. 
	
	\begin{theorem}\label{equidistribution}
		Let $q_0$ be the constant from Connecting Principle. For any sufficiently large $R$ and $1>\epsilon\geq e^{-2q_0R} $, the multi-pants $\mu_1\in\mathbb Z\PPi_{\epsilon,R}$ that assigns to each pants the value 1 satisfies that 
		\begin{enumerate}
			\item for every $\gamma\in \Gamma_{\epsilon,R}$, let $
		K_\gamma$ be the total number of all pants in $\PPi_{\epsilon,R}$ that have $\gamma$ as a boundary. Then $K_\gamma\asymp \epsilon^2Re^R$.
			\item (almost evenly distributed) for any $\gamma\in \Gamma_{\epsilon,R}$, we have 
		\[
			\left|\frac{K_\gamma}{K_{\overline\gamma}}-1\right|\leq C\delta
			\]
			where $\delta=e^{-q_0R}$ and $C$ is a universal constant.
		\item (equidistributed) for any $\gamma\in\Gamma_{\epsilon,R}$, and any subarc $I\subset\sqrt{\gamma}$ satisfying $\len(I)\geq \delta^2$, we have 
		\[
		\hat\partial _\gamma\mu_1(I)=\frac{K_\gamma}{\mathbf{hl}(\gamma)}\len(I)\cdot\left(1+O(\delta)\right).
		\]
		\end{enumerate}
	\end{theorem}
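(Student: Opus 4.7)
The approach is to parametrize each $\eta\in Conn_\gamma$ by its geometric data $(p_1,p_2,L)\in\gamma\times\gamma\times[0,\infty)$ (the two feet and the length), translate the $(\epsilon,R)$-good condition on the pants $Thd_\gamma(\eta)$ into a condition on $(p_1,p_2,L)$, and then apply Corollary \ref{testimate} to count the resulting region. All three statements will follow from this single framework.

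\textbf{Counting $K_\gamma$.} Using the right-angled hexagonal decomposition of $Thd_\gamma(\eta)$, I express the half-lengths $\ell_2,\ell_3$ of the other two cuffs as smooth functions of the intrinsic invariants $(a,L)$, where $a=a(p_1,p_2)$ is the (shorter) arc-length separation of $p_1,p_2$ along $\gamma$. A direct computation with the hexagon formula shows that the fully symmetric configuration $\ell_2=\ell_3=R$ is attained at a unique point $(a_R,L_R)$ with $a_R\approx R$ and $L_R=R+O(1)$, and that the Jacobian of $(a,L)\mapsto(\ell_2,\ell_3)$ is bounded below near this point. The good region $\Omega\subset\gamma\times\gamma\times[W,\infty)$ (taking $W=L_R-\epsilon$) therefore has $3$D Lebesgue measure $\asymp R\epsilon^2$: the $(a,L)$ box contributes area $\asymp\epsilon^2$, and for each allowed $(a,L)$ the remaining translational freedom $(p_1,p_2)\mapsto(p_1+t,p_2+t)$ contributes length $\asymp R$. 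Applying Corollary \ref{testimate} with $\delta=e^{-q_0 W}\asymp e^{-q_0 R}$, the main term evaluates to $\int_\Omega E_M e^L\,d\lambda_\gamma^2\,dL\asymp R\epsilon^2 e^{L_R}\asymp R\epsilon^2 e^R$, while the hypothesis $\epsilon\geq e^{-2q_0 R}=\delta^2$ keeps $\Omega$ thick enough in every direction for the boundary correction to remain subdominant. This gives part (1).

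\textbf{Even distribution and equidistribution.} For part (2), the identical analysis applied to $\overline\gamma$ yields $K_{\overline\gamma}$ with the same leading constant $C_M R\epsilon^2 e^R$, so both $K_\gamma$ and $K_{\overline\gamma}$ equal this leading term times $(1+O(\delta))$, forcing $|K_\gamma/K_{\overline\gamma}-1|\leq C\delta$. For part (3), observe that the foot $Ft_\gamma(\Pi)$ of a pants in $\mu_1$ is, up to a deterministic shift of roughly $R/2$ along $\sqrt\gamma$, a smooth function of the translation parameter $t$; consequently restricting to pants with foot in $I$ restricts $t$ to an arc of length $\len(I)$. The restricted region has $3$D measure $\asymp\len(I)\cdot\epsilon^2$, and the hypothesis $\len(I)\geq\delta^2$ ensures the corresponding boundary term is again subdominant. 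A second application of Corollary \ref{testimate} yields $\hat\partial_\gamma\mu_1(I)\asymp E_M\len(I)\epsilon^2 e^R$ with relative error $O(\delta)$, and dividing by the expression for $K_\gamma$ from part (1) gives $\hat\partial_\gamma\mu_1(I)=\frac{K_\gamma}{\mathbf{hl}(\gamma)}\len(I)\cdot(1+O(\delta))$.

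The principal obstacle is the hyperbolic trigonometry of Step 1: one must produce explicit asymptotic formulas for $a_R$, $L_R$, and the Jacobian of $(a,L)\mapsto(\ell_2,\ell_3)$, and verify that this Jacobian is nondegenerate so that the good region is genuinely $\asymp R\epsilon^2$ rather than collapsing. A secondary but delicate bookkeeping task is to choose an $O(\delta^2)$-fattening of $\Omega$ in each application of Corollary \ref{testimate} so that the boundary-correction integral is controlled by $O(\delta)$ times the main term under the weak hypothesis $\epsilon\geq e^{-2q_0 R}$ (and analogously for $\len(I)\geq\delta^2$ in part (3)).
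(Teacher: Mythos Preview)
Your proposal is correct and follows essentially the same route as the paper: parametrize third connections by $(p_1,p_2,L)\in\gamma\times\gamma\times\mathbb R^+$, describe the good region via the hexagon trigonometry, and apply Corollary~\ref{testimate}. One minor simplification the paper makes is in the proof of (2) and (3): rather than separately arguing that $K_\gamma$ and $K_{\overline\gamma}$ share the same leading constant, the paper observes that the midpoints map $\mathcal R(\gamma,\sqrt\gamma)\to\sqrt\gamma$ is translation-invariant, so by Fubini the main integral factors as $k\cdot\len(I)$ with $k=k(\len(\gamma),\epsilon,R)$; since $\len(\gamma)=\len(\overline\gamma)$, part~(2) is immediate from $K_\gamma=k\,\mathbf{hl}(\gamma)(1+O(\delta))$, and part~(3) from the same formula for general $I$.
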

	
	\begin{proof}
	We investigate the measure $\hat\partial _\gamma\mu_1$ by describing the region where the third connections lie. Suppose that $\gamma\in \Gamma_{\epsilon,R}$ is an oriented geodesic curve and $I$ is a subarc of $\sqrt{\gamma}$. We define the space $\mathcal R(\gamma,I)\subset\gamma\times\gamma\times\mathbb R^+$ as the set of $(x,y,w)$ for which we have 
	\begin{enumerate}
		\item Let $\sigma_1,\sigma_2$ be two segments of $\gamma\backslash\{x,y\}$. Then for any $i=1,2$
		\[
		\bigg|h\left(\len(\sigma_i),w\right)-2R\bigg|\leq 2\epsilon
		\]
		where $h$ is a function determined by the hyperbolic trigonometry such that $h\left(\len(\sigma_i),w\right)$ equal to the length of the closed geodesic $[\eta\sigma_i]$. Namely $
		\cosh\big(h(a,b)/2\big)=\sinh(a/2)\sinh(b/2)$.
		\item The pair of antipodal points consisting of midpoints of $\sigma_1,\sigma_2$ lies in the interval $I$. 
		
	\end{enumerate}
	Thus a pair of pants $\Pi\in\PPi_\gamma$ lies in $\PPi_{\epsilon,R}$ and $Ft_\gamma(\Pi)\in I$ if and only if its third connection $\eta\in  \mathcal R(\gamma,I)$. This implies the following equation
		\begin{equation}
			\hat\partial _\gamma\mu(I)=\#\bigg(Conn_{\gamma}\cap\mathcal R(\gamma,I)\bigg).
		\end{equation}
		The volume of the regions with respect to the measure $d\lambda^2dw$ on $\gamma\times\gamma\times\mathbb R^+$ satisfies that
		\begin{equation*}
			\begin{aligned}
				Vol\big(\mathcal R(\gamma,I)\big)\asymp \epsilon^2\len(I)&\text{, and }&Vol\left(N_{3\delta^2}\left(\partial \mathcal R(\gamma,I)\right)\right)\asymp \epsilon^2\delta^2+\epsilon\len(I)\delta^2,
			\end{aligned}
		\end{equation*}
		where $\delta=e^{-q_0R}$. If $\len(I),\epsilon>\delta^2$, then we have
		\begin{equation*}
			Vol\left(N_{3\delta^2}\left(\partial \mathcal R(\gamma,I)\right)\right)=Vol\big(\mathcal R(\gamma,I)\big)O(\delta).
		\end{equation*}
		Combining the above estimates with Corollary \ref{testimate}, we obtain that	
	 \begin{equation}\label{E03}
	 	\begin{aligned}
	 		\hat\partial _\gamma\mu_1(I)=&\int_{\mathcal R(\gamma,I)} E_M\exp(w)dwd\lambda_\gamma^2\cdot \big(1+O(\delta)\big).
	 	\end{aligned}
	 \end{equation}
	 In particular, since $w\approx R+2\log 2$, by setting $I=\sqrt{\gamma}$, we get
	 \[
	 K_\gamma=\hat\partial _\gamma\mu_1(\sqrt{\gamma})\asymp\epsilon^2Re^R.
	 \]
	 This proves the first property.  Moreover, since the midpoints map
	 \begin{equation*}
	 	\begin{aligned}
	 		\mathcal R(\gamma,\gamma)\rightarrow \sqrt{\gamma}&&(x,y,w)\mapsto\text{the midpoints of } \sigma_1,\sigma_2
	 	\end{aligned}
	 \end{equation*}
	 is translation invariant, by applying the Fubini Theorem, we have
	 \[
	 \int_{\mathcal R(\gamma,I)} E_M\exp(w)dwd\lambda_\gamma^2=k\text{ }\len(I)
	 \]
	 for some constant $k=k\big(\len(\gamma),\epsilon,R\big)$. Combining this with the estimate (\ref{E03}), we obtain that\[\hat\partial _\gamma\mu_1(I)=k\text{ }\len(I)\cdot \big(1+O(\delta)\big).\]
	 This proves the third property, because \begin{equation}\label{E02}
	 	K_\gamma=k\text{ }\mathbf{hl}(\gamma)\cdot \big(1+O(\delta)\big).
	 \end{equation}
	 On the other hand, we claim that the estimate (\ref{E02}) also implies the almost evenly distributed of the multi-pants $\mu_1$. 
	In fact, since $k$ depends only on the length of the geodesic curve $\gamma$, we have $k(\gamma)=k(\overline\gamma)$ and $K_\gamma=K_{\overline\gamma}\cdot\left(1+O(\delta)\right)$. This completes the proof. 
	\end{proof}
	The following is a corollary of the almost evenly distributed property of the multi-pants $\mu_1$.
	\begin{corollary}
		There exists a universal constant $C$ such that for any sufficiently large $R>0$ and $1\geq\epsilon\geq e^{-2q_0R}$ the following holds. There exists a multi-curve $\nu\in\mathbb Z\Gamma_{\epsilon,R}$ such that $\nu=\partial \mu_1$ and
	\begin{equation}\label{E06}
		|\nu(\gamma)|\leq CK_\gamma\delta
	\end{equation}
	for any $\gamma\in\Gamma_{\epsilon,R}$, where $\delta=e^{-q_0R}$.
	\end{corollary}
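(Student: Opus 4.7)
The plan is to take $\nu=\partial\mu_1$ itself and read the required inequality off directly from property (2) of Theorem~\ref{equidistribution}. The work consists only of unwinding the definitions and converting the multiplicative closeness $K_\gamma/K_{\overline\gamma}\approx 1$ into an additive bound.

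First, I would compute $\nu(\gamma)$ explicitly. By definition, $\partial\Pi=\sum_{i=1}^{3}\Pi(C_i)$ in $\mathbb Z\Gamma$, where each boundary curve carries the orientation induced from the pair of pants, and in $\mathbb Z\Gamma$ we identify $\overline\gamma=-\gamma$. Summing over all immersed pants in $\PPi_{\epsilon,R}$ with coefficient one, and grouping terms by the oriented curve they contribute, I obtain
\[
\nu(\gamma)=K_\gamma-K_{\overline\gamma}
\]
for every $\gamma\in\Gamma_{\epsilon,R}$. Here $K_\gamma$ is counted with multiplicity if a pair of pants has $\gamma$ as more than one of its boundaries, matching the convention in the proof of Theorem~\ref{equidistribution}. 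Since the boundaries of every pants in $\PPi_{\epsilon,R}$ lie in $\Gamma_{\epsilon,R}$, the multi-curve $\nu$ is supported on $\Gamma_{\epsilon,R}$ as required.

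Next, I would apply property (2) of Theorem~\ref{equidistribution}: $|K_\gamma/K_{\overline\gamma}-1|\leq C\delta$. Rewriting this as $|K_\gamma-K_{\overline\gamma}|\leq C\delta\,K_{\overline\gamma}$ and, for $R$ large enough that $C\delta\leq 1/2$, using $K_{\overline\gamma}\leq (1-C\delta)^{-1}K_\gamma\leq 2K_\gamma$, I obtain
\[
|\nu(\gamma)|=|K_\gamma-K_{\overline\gamma}|\leq 2C\delta\,K_\gamma,
\]
which is the desired bound after absorbing the factor $2$ into a new universal constant.

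There is essentially no obstacle: the statement is a direct corollary of the almost evenly distributed property established in Theorem~\ref{equidistribution}. The only mildly subtle point is phrasing the inequality in terms of $K_\gamma$ rather than $K_{\overline\gamma}$, which is why the hypothesis that $R$ be sufficiently large (so that $C\delta$ is bounded away from $1$) enters. Everything else is bookkeeping with the sign convention $\overline\gamma=-\gamma$ in $\mathbb Z\Gamma$.
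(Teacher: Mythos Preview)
Your proposal is correct and is exactly the argument the paper has in mind: the paper does not spell out a proof but simply labels this as ``a corollary of the almost evenly distributed property of the multi-pants $\mu_1$,'' and your derivation of $\nu(\gamma)=K_\gamma-K_{\overline\gamma}$ together with property~(2) of Theorem~\ref{equidistribution} is precisely that. The only cosmetic difference is that the paper's proof of Theorem~\ref{equidistribution} already gives the symmetric form $K_\gamma=K_{\overline\gamma}(1+O(\delta))$, so one could skip the step converting $K_{\overline\gamma}$ to $K_\gamma$, but your handling of it is fine.
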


	\subsection{The Good Pants Homology Theorem}
	In this subsection, we state the Good Pants Homology Theorem, which is proved in Section 7. This theorem is the central technical contribution of the paper. It provides an effective version of \cite[Theorem 3.4]{KM-Correction}.  
  
\begin{theorem}[Good Pants Homology]\label{good pants}
	There exist constants $R_1,\epsilon_1,q_1>0$ depending only on $M$ such that for any $R\geq R_1$ and $\epsilon_1\geq\epsilon\geq e^{-q_1R}$ the following holds. There exists a natural number $N_R\leq e^{30R}$ (depending only on $R$) and a map
	\[
	\Phi:\Gamma_{\epsilon,R}\rightarrow\frac{\mathbb Z\PPi_{\epsilon,R}}{N_R},
	\]
	such that the map $\Phi$ satisfies the following properties:
\begin{enumerate}
	\item \text{Homology condition:}  There exists a $\mathbb Z$-linear map 
	\[
	\Psi:H_1(M,\mathbb Z)\rightarrow \frac{\mathbb Z\Gamma_{\epsilon,R}}{N_R}
	\]
	such that for any good curve $\gamma\in\Gamma_{\epsilon,R}$, we have 
	\[
	\partial \Phi(\gamma)=\gamma-\Psi(H(\gamma)),
	\]
	where $H(\gamma)\in H_1(M,\mathbb Z)$ is the homology class of $\gamma$.
	\item \text{Randomness condition:} For any good pants $\Pi\in\PPi_{\epsilon,R}$, the following holds
	\[
		\sum_{\gamma\in\Gamma_{\epsilon,R}}|\Phi(\gamma)|(\Pi)\leq e^{-R}(R\epsilon^{-1})^m.
	\]
\end{enumerate}
\end{theorem}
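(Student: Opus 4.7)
The plan is to prove the Good Pants Homology Theorem in three stages: first I establish a homological reference map $\Psi$ by picking a $\mathbb{Z}$-basis of $H_1(M,\mathbb{Z})$ represented by good curves; second, for each good curve $\gamma$, I reduce the construction of $\Phi(\gamma)$ to the construction of a rational multi-pants bounding a product of commutators in $G = \pi_1(M)$; and third, I apply the Group-to-Pants Theorem to translate the group-homological description into a multi-pants, verifying the randomness condition via the semirandom framework of Section 4.

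More concretely, using the equidistribution of good curves provided by Theorem \ref{equidistribution}, I would first select $2g$ curves in $\Gamma_{\epsilon,R}$ whose homology classes form a $\mathbb{Z}$-basis of $H_1(M,\mathbb{Z})$, and define $\Psi$ on this basis and extend linearly. For an arbitrary $\gamma \in \Gamma_{\epsilon,R}$, the combination $\gamma - \Psi(H(\gamma))$ is null-homologous, so it suffices to realize it as the boundary of a rational multi-pants. Picking a based lift of $\gamma$ to $G$ and multiplying by the inverse of a based representative of $\Psi(H(\gamma))$ produces a word $w(\gamma) \in [G,G]$, which in turn admits a presentation $w(\gamma) = \prod_{j=1}^{n}[a_j,b_j]$ with each pair $(a_j,b_j)$ in the domain of $\mathcal{R}_{G \times G}$.

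I would then define $\Phi(\gamma) := \sum_j \mathcal{R}_{G \times G}(a_j, b_j)$. The commutativity of the Group-to-Pants diagram gives
\[
\partial \Phi(\gamma) = \sum_j \partial \mathcal{R}_{G \times G}(a_j, b_j) = \mathcal{R}_G\!\left(\sum_j \partial (a_j, b_j)\right) = \mathcal{R}_G(w(\gamma)),
\]
and the compatibility of $\mathcal{R}_G$ with cyclically reduced representatives of $\gamma$ and of the basis curves $\beta_i$ collapses the right-hand side to $\gamma - \Psi(H(\gamma))$, establishing the homology condition. The denominator $N_R \leq e^{30R}$ absorbs the common denominators appearing in $\mathcal{R}_G$ and $\mathcal{R}_{G \times G}$. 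For the randomness condition I would unpack $\sum_{\gamma} |\Phi(\gamma)|(\Pi)$ as a count of commutator pairs $(a,b)$ whose image under $\mathcal{R}_{G \times G}$ concentrates at $\Pi$, and bound it by $e^{-R}(R\epsilon^{-1})^m$ using the semirandom norm control from Section 4, which ultimately rests on the exponential mixing estimates underlying the Connecting Principle.

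The principal obstacle will be the commutator decomposition: for each good curve $\gamma$ I must produce an expression $w(\gamma) = \prod_j [a_j,b_j]$ in which every pair $(a_j,b_j)$ lies in the domain of $\mathcal{R}_{G \times G}$ (so translation lengths stay in the window near $R$ and the associated geometric data permit the $\theta$-graph construction), the number $n$ of factors is only polynomial in $R\epsilon^{-1}$, and the pairs are distributed generically enough that the semirandom norm of $\Phi$ remains within the bound required by the randomness condition. Ensuring this balance---building the commutator sequence inductively while keeping the semirandom norm under control at each step and avoiding concentration at any individual pants---is the essential content going beyond the non-effective Good Pants Homology Theorem of Kahn-Markovi\'c, and is where I expect the bulk of Section 7 to be spent.
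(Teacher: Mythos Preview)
Your proposal contains a genuine gap at the algebraic core. The boundary map in the Group-to-Pants framework is $\partial(X,Y)=X+Y-XY$, so the commutativity of the diagram gives
\[
\partial\,\mathcal{R}_{G\times G}(a_j,b_j)=\mathcal{R}_G(a_j)+\mathcal{R}_G(b_j)-\mathcal{R}_G(a_jb_j),
\]
which has nothing to do with the commutator $[a_j,b_j]$. Consequently your displayed identity $\sum_j\partial(a_j,b_j)=w(\gamma)$ is simply false, and defining $\Phi(\gamma)=\sum_j\mathcal{R}_{G\times G}(a_j,b_j)$ does not produce a multi-pants with boundary $\mathcal{R}_G(w(\gamma))$. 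One \emph{can} realize $\mathcal{R}_G([a,b])$ as a boundary, but it requires several triangles (e.g.\ $(a,b)$, $(ab,a^{-1})$, $(aba^{-1},b^{-1})$) together with the relation $\mathcal{R}_G(a)+\mathcal{R}_G(a^{-1})=0$; a single triangle per commutator will not do.

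There is a second, more structural gap: even after you obtain $\mathcal{R}_G(w(\gamma))$ as a boundary, you still need to get back the \emph{curve} $\gamma\in\Gamma_{\epsilon,R}$ itself. The map $\mathcal{R}_G$ takes a based loop $A$ to a random good multi-curve of the form $\tfrac12([Str(A)\underline{B}]-[\overline{Str(A)}\underline{B}])$; it does \emph{not} send a based representative of $\gamma$ to $\gamma$. Your ``collapse'' step---asserting that $\mathcal{R}_G$ applied to based representatives of $\gamma$ and of the basis curves $\beta_i$ yields $\gamma-\Psi(H(\gamma))$---therefore has no justification. In the paper this is handled by a separate geometric construction (the Curve-to-Group Lemma): for each $\gamma$ one builds an explicit $\theta$-graph producing a multi-pants $\mathcal{RC}(\gamma)$ with $\partial\mathcal{RC}(\gamma)=\mathcal{R}_G(X(\gamma))+\mathcal{R}_G(Y(\gamma))-\gamma$, where $X(\gamma),Y(\gamma)$ are two group elements obtained by cutting $\gamma$ at antipodal points. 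This is precisely the missing bridge between the free homotopy class $\gamma$ and the based group homology.

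For comparison, the paper never uses commutators. After the Curve-to-Group step it reduces the problem to showing $\mathcal{R}_G(A)=\mathcal{R}_G(H(A))$ modulo boundaries for bounded $A$, and does this by an iterated \emph{dichotomy}: repeatedly writing $A=X(A)Y(A)$ by bisecting the geodesic representative and connecting the midpoint to the basepoint, so that the triangle $(X(A),Y(A))$ trades $\mathcal{R}_G(A)$ for $\mathcal{R}_G(X(A))+\mathcal{R}_G(Y(A))$. After $O(\log R)$ rounds the pieces have uniformly bounded length and can be written in the fixed generators, yielding $H(A)$; the map $\Psi$ is then simply the restriction of $\mathcal{R}_G$ to the generators, not a choice of good basis curves.
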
 
The Good Pants Homology Theorem is used to correct the almost evenly distributed multi-pants $\mu_1$ to an evenly distributed one.

\begin{definition}[Corrected multi-pants]
	Suppose $\epsilon,R$ satisfy the conditions from Theorem \ref{good pants}. We define 
\begin{equation*}
	\begin{aligned}
		\mu_2=\mu_1-\Phi(\nu)\in\frac{\mathbb Z\PPi_{\epsilon,R}}{N_R} &&\text{, and }\mu=N_R\cdot \mu_2\in\mathbb Z\PPi_{\epsilon,R}.
	\end{aligned}
\end{equation*}
\end{definition}
\begin{proposition}
	The multi-pants $\mu$ is evenly distributed.
\end{proposition}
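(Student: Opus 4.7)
The plan is essentially a direct computation using the two defining properties of $\Phi$, together with the elementary observation that the boundary of any immersed pair of pants is null-homologous in $M$.

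First I would compute $\partial\mu_2 = \partial\mu_1 - \partial\Phi(\nu) = \nu - \partial\Phi(\nu)$, where I use $\nu = \partial\mu_1$ by definition. Extending the homology condition of Theorem \ref{good pants} linearly over $\nu = \sum_\gamma \nu(\gamma)\,\gamma$ gives
\[
\partial\Phi(\nu) \;=\; \sum_{\gamma} \nu(\gamma)\bigl(\gamma - \Psi(H(\gamma))\bigr) \;=\; \nu - \Psi(H(\nu)),
\]
so that $\partial\mu_2 = \Psi(H(\nu))$ as an element of $\frac{\mathbb Z\Gamma_{\epsilon,R}}{N_R}$.

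Next I would observe that for every $\Pi\in\PPi_{\epsilon,R}$ the three cuffs $\partial\Pi = \Pi(C_1)+\Pi(C_2)+\Pi(C_3)$ together bound the immersed pair of pants $\Pi$ itself, hence represent $0$ in $H_1(M,\mathbb Z)$. Summing over the pants occurring in $\mu_1$ with appropriate signs gives $H(\nu) = H(\partial\mu_1) = 0$. By the $\mathbb Z$-linearity of $\Psi$ this forces $\Psi(H(\nu))=0$, so $\partial\mu_2 = 0$ in $\frac{\mathbb Z\Gamma_{\epsilon,R}}{N_R}$.

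Finally, since $\mu = N_R\cdot\mu_2 \in \mathbb Z\PPi_{\epsilon,R}$ is a genuine integer multi-pants and $\partial$ commutes with multiplication by $N_R$, we conclude $\partial\mu = N_R\cdot\partial\mu_2 = 0$ in $\mathbb Z\Gamma_{\epsilon,R}$, which is exactly the evenly distributed condition. There is no real obstacle here: the entire content is in Theorem \ref{good pants}, and the proposition just records that the homology correction was designed precisely so that the boundary cancels. The only small point to verify with some care is that the linear extension of $\partial\circ\Phi$ lands in $\frac{\mathbb Z\Gamma_{\epsilon,R}}{N_R}$ and that both summands in the expression for $\partial\mu_2$ live in the same group, which is immediate from the definitions.
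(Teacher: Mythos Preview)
Your proof is correct and follows essentially the same route as the paper: both compute $\partial\mu = N_R(\nu - \partial\Phi(\nu))$, invoke the homology condition of Theorem~\ref{good pants} to rewrite $\partial\Phi(\nu)=\nu-\Psi(H(\nu))$, and use $H(\nu)=H(\partial\mu_1)=0$ to conclude. You are slightly more explicit than the paper in justifying why $H(\partial\mu_1)=0$ (each pair of pants bounds its own cuffs) and in spelling out the linear extension, but the argument is the same.
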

\begin{proof}
	Since $\nu=\partial \mu_1$, the multi-curve $\nu$ has trivial homology. That is $H(\nu)=0$.  By applying the homology condition in Theorem \ref{good pants}, the following holds
	\begin{equation*}
		\begin{aligned}
			\partial  \mu=&N_R\cdot (\partial \mu_1-\partial \Phi(\nu))\\
			=&N_R\cdot(\nu-\partial \Phi(\nu))=0.
		\end{aligned}
	\end{equation*}
\end{proof}
It now suffices to prove that the corrected multi-pants $\mu$ is equidistributed and to estimate the number of pants in $\mu$. Before diving into the proof of Theorem \ref{main2}, we briefly explain the difference between our Good Pants Homology Theorem and Kahn-Markovic's \cite[Theorem 3.4]{KM-Correction}. The effectiveness of our theorem  has a threefold meaning:
\begin{enumerate}
	\item Firstly, it is necessary to specify the admissible range for the constants $\epsilon,R$ under which this theorem holds. We prove that this theorem holds when $R\geq R_1$ and $\epsilon_1\geq\epsilon\geq e^{-q_1R}$ for some constants $R_1,\epsilon_1,q_1>0$ depending only on $M$.
	\item Secondly, to control the number of pants in the corrected family $\mu$, we must establish a version of good pants homology with effectively controlled coefficients, rather than $\mathbb Q$-coefficients as was done in \cite{KM-Correction}.
	\item Finally, we must provide a complete description of the asymptotic behavior of the randomness condition of the correction map $\Phi$ as $\epsilon\rightarrow 0$, and $R\rightarrow\infty$, rather than only as $R\rightarrow\infty$ as was done in \cite{KM-Correction}.
\end{enumerate}

\begin{remark}\label{namegp}
	The name "Good Pants Homology Theorem" is justified by the homology condition for the following reason. The good pants homology group $\Omega_1$ is defined as formal sums of good curves modulo the boundaries of formal sums of good pants, namely,
 \[
 \Omega_1=\mathbb Z\Gamma_{\epsilon,R}/\partial(\mathbb Z\PPi_{\epsilon,R}).
 \] Thus, there is a natural morphism from the good pants homology group $\Omega_1$ to the standard homology group. In Theorem \ref{good pants}, we establish an inverse $\mathcal R$ to the $\frac{\mathbb Z}{N_R}$-coefficient natural morphism. Therefore, the $\frac{\mathbb Z}{N_R}$-coefficient $(\epsilon,R)$-good pants homology is isomorphic to the $\frac{\mathbb Z}{N_R}$-coefficient standard homology, a highly nontrivial result.  
\end{remark}
	\subsection{Proof of Theorem \ref{main2}}
	In this subsection, we prove that there exist constants $R_M,\epsilon_M,q_M$ such that for any $R\geq R_M$ and $\epsilon_M\geq \epsilon\geq e^{-q_MR}$, the corrected multi-pants $\mu$ is equidistributed and the total number of pants in $\mu$ is less than $e^{50R}$. Thus, by the Covering Constructing Lemma, we can glue the multi-pants $\mu$ to get a finite cover satisfying the conditions in Theorem \ref{main2}. \par 
	  The following lemma establishes that the coefficients of the correction term $\Phi(v)$ are very small. 
	\begin{lemma}
		There exist constants $R_2,q_2>0$ depending only on $M$ such that for any $R\geq R_2$, and  $\epsilon\geq e^{-q_2R}$, the following estimate holds
	 \begin{equation}\label{E08}
	 	\Phi(\nu)(\Pi)\leq \sqrt{\delta}<1,
	 \end{equation}
	 where $\delta=e^{-q_0R}$.
	\end{lemma}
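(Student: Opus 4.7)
The plan is to estimate $\Phi(\nu)(\Pi)$ by expanding $\nu$ in the basis $\Gamma_{\epsilon,R}$, applying $\Phi$ by linearity, and then combining the coefficient bound (\ref{E06}) for $\nu$ with the randomness condition from Theorem \ref{good pants}. The triangle inequality gives
\[
|\Phi(\nu)(\Pi)|\leq\Big(\max_{\gamma\in\Gamma_{\epsilon,R}}|\nu(\gamma)|\Big)\cdot\sum_{\gamma\in\Gamma_{\epsilon,R}}|\Phi(\gamma)|(\Pi).
\]
The first factor is handled by (\ref{E06}) combined with the uniform estimate $K_\gamma\asymp\epsilon^2Re^R$ from the first part of Theorem \ref{equidistribution}, yielding $|\nu(\gamma)|\leq C'\epsilon^2Re^R\delta$ uniformly in $\gamma$. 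The second factor is controlled directly by the randomness condition as $e^{-R}(R/\epsilon)^m$. Multiplying the two bounds collapses the $e^R\cdot e^{-R}$ factors and leaves
\[
|\Phi(\nu)(\Pi)|\leq C''\,R^{m+1}\,\epsilon^{2-m}\,\delta.
\]

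It then remains to verify $C''R^{m+1}\epsilon^{2-m}\delta\leq\sqrt{\delta}$, equivalently $C''R^{m+1}\epsilon^{2-m}\leq e^{q_0R/2}$. If the exponent $m$ in the randomness condition satisfies $m\leq 2$, then $\epsilon^{2-m}$ is bounded by a constant depending on $\epsilon_1$ (using $\epsilon\leq\epsilon_1$), and the polynomial factor $R^{m+1}$ is absorbed by $e^{q_0R/2}$ once $R$ is large. If $m>2$, the hypothesis $\epsilon\geq e^{-q_2R}$ gives $\epsilon^{2-m}\leq e^{(m-2)q_2R}$; choosing $q_2$ small enough that $(m-2)q_2<q_0/2$ keeps the exponential factor under control, and the polynomial $R^{m+1}$ is again absorbed for $R$ sufficiently large. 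A concrete safe choice is $q_2=\min\{q_1,\,q_0/(4m)\}$ together with $R_2$ large enough that $C''R_2^{m+1}e^{-q_0R_2/4}\leq 1$; this also ensures $\sqrt{\delta}<1$ and the compatibility condition $q_2\leq q_1$ needed to apply Theorem \ref{good pants}.

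No geometric obstacle arises here, since all the work has been placed into Theorem \ref{good pants} and into Theorem \ref{equidistribution}: the randomness condition was precisely designed to be summed against the near-vanishing error $\nu$. The only care required is bookkeeping of the interaction between the mixing exponent $q_0$ (which controls $\delta$), the admissible exponent $q_1$ for $\epsilon$ in the Good Pants Homology Theorem, and the fixed exponent $m$ appearing in the randomness bound; once these are balanced as above, the estimate $|\Phi(\nu)(\Pi)|\leq\sqrt{\delta}<1$ follows immediately.
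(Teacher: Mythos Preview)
Your proof is correct and follows essentially the same approach as the paper: expand $\Phi(\nu)(\Pi)$ linearly, bound the coefficients $|\nu(\gamma)|$ uniformly via (\ref{E06}) together with $K_\gamma\asymp\epsilon^2Re^R$, apply the randomness condition to the sum $\sum_\gamma|\Phi(\gamma)|(\Pi)$, and then balance the exponents by choosing $q_2$ a small fraction of $q_0/m$. Your explicit note that one must also take $q_2\le q_1$ to ensure Theorem~\ref{good pants} applies is a detail the paper leaves implicit.
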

	\begin{proof}
		By applying Estimate (\ref{E06}), the following holds for any $\Pi\in\PPi_{\epsilon,R}$
	 \begin{equation*}
	 \begin{aligned}
	 	 	\Phi\nu(\Pi)&=\sum_{\gamma\in\Gamma_{\epsilon,R}}\nu(\gamma)\cdot(\Phi(\gamma))(\Pi)\\
	 	 	&\leq \sum_{\gamma\in\Gamma_{\epsilon,R}} C_1K_\gamma\delta\cdot\left|\Phi(\gamma)\right|(\Pi).
	 \end{aligned}
	 \end{equation*}
	By Theorem \ref{equidistribution} we have $K\asymp K_\gamma$, where $K=\epsilon^2Re^R$. Thus there exists a constant $C_2=C_2(M)$ such                          that $K_\gamma\leq C_2 K$. Combining this with the above estimate, we obtain 
	\[
	\Phi\nu(\Pi)\leq C_1C_2K\delta\sum_{\gamma\in\Gamma_{\epsilon,R}} \left|\Phi(\gamma)\right|(\Pi).
	\]
	By applying the randomness condition in Theorem \ref{good pants}, the following holds
	\[
	\Phi\nu(\Pi)\leq (C_1C_2K\delta)\left( e^{-R}(R\epsilon^{-1})^{m}\right)=\left(C_1C_2R^{m+1}e^{-q_0R/2}\epsilon^{-m+2}\right)\sqrt{\delta}.
	\]
	Let $q_2=q_0/(4m+8)$, and $R_2$ be sufficiently large such that
	\[
	C_1C_2R^{m+1}e^{-q_0R/2}\epsilon^{-m+2}\leq1.
	\]
	This completes the proof.
	\end{proof}
	 We are going to check that the multi-pants $\mu$ is equidistributed, and estimate the total number of pants in $\mu$.  Since there are $K_\gamma$ pants that have $\gamma$ as a boundary, by applying Estimate (\ref{E08}), we get 
	 \[
	 (\partial \Phi(\nu))(\gamma)\leq K_\gamma\sqrt{\delta}.\] Combining this with the third property from Theorem \ref{equidistribution}, for any subarc $I\subset\sqrt{\gamma}$ satisfying $\len(I)\geq \delta^{1/4}$, the following estimates hold
	 \begin{equation*}
	 	\begin{aligned}
	 		\hat\partial _\gamma\mu_2(I)&=\hat\partial _\gamma\mu_1(I)-\hat\partial _\gamma\Phi\nu(I)\\
	 		&=\frac{K_\gamma}{\mathbf{hl}(\gamma)}\len(I)\cdot\left(1+O(\delta)\right)+O\left(K_\gamma\sqrt{\delta}\right)\\
	 		&=\frac{K_\gamma}{\mathbf{hl}(\gamma)}\len(I)\cdot\left(1+O(\delta^{1/4})\right).
	 	\end{aligned}
	 \end{equation*}
	This implies that $\mu_2$ is equidistributed. Since $\mu=N_R\mu_2$, $\mu$ is also equidistributed. On the other hand, we have the following estimate
	 \begin{equation*}
	 	\begin{aligned}
	 		\mu(\PPi)= N_R\mu_2(\PPi)\leq 2N_R\mu_1(\PPi),
	 	\end{aligned}
	 \end{equation*}
	where the inequality comes from Estimate (\ref{E08}). Since 
	\[\mu_1(\PPi)\leq\#\PPi_{1,R}\asymp e^{3R},\] the total number of pants in $\mu$ is less than $e^{50R}$. 

\section{Preliminaries on hyperbolic geometry}
The proof of the Good Pants Homology Theorem is constructive.  In this section, we review some basic tools on hyperbolic geometry, which support the construction. 
\subsection{The connection set and the Connection Lemma}
 
For any two unit vectors $u_1,u_2\in T^1M$ supported at the same point, we denote $\Theta(u_1,u_2)$ to be the non-oriented angle between $u_1$ and $u_2$.
\begin{definition}(Connection set)
	 Let $L,\epsilon>0$ be two constants and $u,v$ be any two unit vectors. By $Conn_{\epsilon,L}(u,v)$ we denote the set of unit speed geodesic segments $\gamma:[0,l]\rightarrow M$ such that
\begin{enumerate}
	\item the unit vector $u$ is supported at $\gamma(0)$, and the vector $v$ is supported at $\gamma(l)$. And $\Theta(u,i(\gamma)),\Theta(v,t(\gamma))\leq\epsilon$.
	\item $|l-L|\leq\epsilon$
\end{enumerate}
By $Conn_{\epsilon,<L}(u,v)$ we denote the set of geodesic arcs satisfying the above first condition and $l< L$.
\end{definition}
 \par 
 The following lemma is a weaker version of Connecting Principle.
\begin{lemma}[Connection Lemma]
	Let $q_0$ be the constant from Connecting Principle. There exists a constant $L_0=L_0(M)>0$ such that the following holds. Let 
	\[L(\epsilon)=\max\{-q_0^{-1}\log\epsilon,-\log\epsilon,L_0\}.\]
	Suppose $L\geq L(\epsilon)$. Then the connection ser $Conn_{\epsilon,L}(u,v)$ is nonempty for any unit vectors $u$ and $v$. Moreover, there exists $C>0$ depending only on $M$ such that
	\[
	\#Conn_{\epsilon,L}(u,v)\geq Ce^{L}\epsilon^{3}.
	\]
\end{lemma}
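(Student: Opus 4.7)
The plan is to deduce the Connection Lemma from the Connecting Principle by placing two short geodesic arcs transverse to $u$ and $v$ at their base points, counting orthogeodesic connections between them, and perturbing each such connection to a genuine element of $Conn_{\epsilon,L}(u,v)$.

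Let $p,q$ be the base points of $u,v$, and fix a universal constant $C=C(M)$ to be determined later. Let $A$ be a geodesic arc of length $\epsilon/C$ through $p$, perpendicular to $u$ and oriented so that $u$ is the right-hand unit normal at $p$. Similarly let $B$ be a geodesic arc of length $\epsilon/C$ through $q$, perpendicular to $v$ and oriented so that $-v$ is the right-hand unit normal at $q$; with this convention, orthogeodesics from the right of $A$ arriving at the right of $B$ have terminal direction close to $v$. Partition $A$ and $B$ into sub-arcs of length $\delta^2$ (where $\delta=e^{-q_0L}$) and partition $[L,L+\epsilon/C]$ into sub-intervals of length $\delta^2$; the hypothesis $L\geq L(\epsilon)\geq -q_0^{-1}\log\epsilon$ guarantees $\delta\leq\epsilon$, so these partitions are well defined. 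Applying the Connecting Principle to each triple of sub-arcs and summing yields
\[
\#\,Conn_{A,B}\bigl([L,L+\epsilon/C]\bigr) \;=\; E_M\,e^L\,(\epsilon/C)^3\bigl(1+O(\delta)\bigr),
\]
which is at least $\tfrac12 E_M e^L(\epsilon/C)^3$ once $L_0=L_0(M)$ is large enough to absorb the multiplicative error.

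For each orthogeodesic $\eta:p'\to q'$ produced above, construct a geodesic segment $\gamma:p\to q$ close to $\eta$ as follows. In the universal cover $\mathbb{H}^2$, lift $\eta$ to $\tilde\eta:\tilde p'\to\tilde q'$, let $\tilde A$ and $\tilde B$ be the lifts of $A$ and $B$ containing $\tilde p'$ and $\tilde q'$ respectively, and let $\tilde p\in\tilde A$, $\tilde q\in\tilde B$ be the lifts of $p$ and $q$. Then $\gamma$ is the projection to $M$ of the geodesic $\tilde\gamma:\tilde p\to\tilde q$. Using the exponential divergence of hyperbolic geodesics and standard thin-triangle estimates, one checks that $\len(\gamma)\in[L-O(\epsilon/C),\,L+O(\epsilon/C)]$, and that both $\Theta(u,i(\gamma))$ and $\Theta(v,t(\gamma))$ are $O(\epsilon/C)$; the angle estimate combines (i) parallel transport along the geodesic arc $A$, which preserves the property of being a unit normal and hence sends the normal at $p'$ to $u$ at $p$, with (ii) a thin-triangle correction of size $O(\epsilon/(Ce^L))$ accounting for the fact that $\tilde\gamma$ and $\tilde\eta$ have different endpoints. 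Choosing $C$ sufficiently large absorbs all implicit constants and shows $\gamma\in Conn_{\epsilon,L}(u,v)$.

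Finally, the map $\eta\mapsto\gamma$ is injective: distinct orthogeodesics $\eta_1\neq\eta_2$, lifted so as to start in a common $\tilde A$, must end in distinct lifts of $B$, since in $\mathbb{H}^2$ two disjoint geodesics admit a unique common perpendicular. Hence the associated $\tilde\gamma_i$ have distinct endpoints and project to distinct geodesics $\gamma_i$ in $M$. Combining injectivity with the count above,
\[
\#\,Conn_{\epsilon,L}(u,v) \;\geq\; \frac{E_M}{2C^3}\,e^L\,\epsilon^3,
\]
which in particular gives the non-emptiness claim for $L\geq L_0$. The main obstacle is the perturbation analysis in the universal cover: verifying that the orthogeodesic-to-geodesic correspondence respects the angle and length tolerances within the required $\epsilon$-scale, which requires quantitative control of the thin-triangle geometry of $\mathbb{H}^2$ and uses the hypothesis $L\geq -\log\epsilon$ to guarantee that the correction term $O(\epsilon/e^L)$ is genuinely small relative to $\epsilon$.
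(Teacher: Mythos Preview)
Your proposal is correct and follows essentially the same approach as the paper: place short transverse arcs at the base points of $u$ and $v$, count orthogeodesics between them via the Connecting Principle, and slide endpoints to obtain elements of $Conn_{\epsilon,L}(u,v)$. The paper's proof is terser---it uses arcs of half-length $\epsilon/3$ and the length window $[L-\epsilon/3,L+\epsilon/3]$, and simply asserts the perturbation step---whereas you spell out the $\delta^2$-partition, the thin-triangle perturbation in the universal cover, and the injectivity argument, making your version more self-contained but not genuinely different in strategy.
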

\begin{proof}
	Let $A_u:[-\epsilon/3,\epsilon/3]\rightarrow M$ be the geodesic segment with $A_u^\prime(0)=\sqrt{-1}u$. Let $B_v:[-\epsilon/3,\epsilon/3]\rightarrow M$ be the geodesic segment with $B_v^\prime(0)=-\sqrt{-1}v$. For any orthogeodesic 
	\[X\in Conn_{A,B}(L-\epsilon/3,L+\epsilon/3),\] by moving the initial and terminal points of $X$ to the base point of $u$ and $v$, respectively, we obtain a geodesic arc $X^\prime\in Conn_{\epsilon,L}(u,v)$. Thus,
	\[
	\#Conn_{\epsilon,L}(u,v) \geq\#Conn_{A,B}(L-\epsilon/3,L+\epsilon/3).
	\]
	By Connecting Principle, there exists a constant $L_0$ such that the following estimate holds
	\[
	\#Conn_{A,B}(L-\epsilon/3,L+\epsilon/3)\geq Ce^L\epsilon^3>0
	\] 
	 provided $L\geq L_0$ and $\epsilon\geq e^{-q_0L}$. This completes the proof.
\end{proof}

\subsection{The Chain Lemma and the Right Angle Chain Lemma}
Let $\alpha_1,\dots,\alpha_n$ be a sequence of consecutive geodesic arcs in $M$ such that $\alpha_1\cdots\alpha_n$ is a piecewise geodesic arc. By $[\cdot\alpha_1\cdots\alpha_n\cdot]$ we denote the geodesic arc that is homotopic to the piecewise geodesic arc with end points fixed. If the initial point of $\alpha_1$ and the terminal point of $\alpha_n$ are the same, by $[\alpha_1\cdots\alpha_n]$ we denote the corresponding geodesic closed curve. \par 
 
 The following lemma, which estimates the length of $[\cdot\alpha_1\cdots\alpha_n\cdot]$, was proved in \cite[Theorem 4.1 and Lemma 4.1]{KM-surface group} .
\begin{lemma} [Chain Lemma]
	There exists a universal constant $Q>0$ such that following holds. Suppose $\len(\alpha_i)\geq Q$ for any $i=1,\dots, n$ and 
	\[\Theta(t(\alpha_i),i(\alpha_{i+1}))\leq\delta<1\]
	 for any $i=1,\cdots,n-1$. Then the following estimates hold for some universal $O$
	\[
	\len\big([\cdot\alpha_1\cdots\alpha_n\cdot]\big)=\sum_i\len(\alpha_i)+O(n\delta)
	\]
	\[
	\Theta\left(i(\alpha_1),i\left([\cdot\alpha_1\cdots\alpha_n\cdot]\right)\right),\Theta\left(t(\alpha_n),t\left([\cdot\alpha_1\cdots\alpha_n\cdot]\right)\right)=O(n\delta)
	\]
	Moreover, assume that $\alpha_1\cdots\alpha_n$ is a piecewise geodesic curve and 
	\[
	\Theta(t(\alpha_n),i(\alpha_{1}))\leq\delta
	\]
	Then the following estimate holds
	\[
	\len\big([\alpha_1\cdots\alpha_n]\big)=\sum_i\len(\alpha_i)+O(n\delta)
	\]
\end{lemma}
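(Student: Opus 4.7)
The plan is to reduce the lemma to a single-corner straightening estimate and iterate. First I would prove a two-arc version: given geodesic arcs $\alpha,\beta$ of length at least $Q$ sharing an endpoint $p$ with $\Theta(t(\alpha),i(\beta))\leq\theta$, the straightening $\gamma=[\cdot\alpha\beta\cdot]$ satisfies
\begin{equation*}
\len(\gamma)=\len(\alpha)+\len(\beta)+O(\theta^2),\ \Theta(i(\alpha),i(\gamma))=O(\theta\, e^{-\len(\alpha)}),\ \Theta(t(\beta),t(\gamma))=O(\theta\, e^{-\len(\beta)}).
\end{equation*}
This follows from applying the hyperbolic law of cosines $\cosh c=\cosh a\cosh b+\sinh a\sinh b\cos\theta$ to the triangle with vertices $i(\alpha),p,t(\beta)$ and expanding for small $\theta$ and $a,b\geq Q$, together with the hyperbolic law of sines for the opposite angles; the latter produces exponentially small direction corrections since $\sinh b/\sinh c=O(e^{-a})$ and $\sinh a/\sinh c=O(e^{-b})$.

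Next I would induct on $n$ with $\beta_k=[\cdot\alpha_1\cdots\alpha_k\cdot]$, maintaining two invariants: (a) $\Theta(t(\alpha_k),t(\beta_k))\leq C_0\delta$ for some universal $C_0$, and (b) $\len(\beta_k)=\sum_{i\leq k}\len(\alpha_i)+O(k\delta)$. The junction angle between $\beta_k$ and $\alpha_{k+1}$ is at most $(C_0+1)\delta$ by the triangle inequality for $\Theta$, so the two-arc lemma applied with first arc $\beta_k$ (of length at least $kQ/2$ by (b)) yields $\len(\beta_{k+1})=\len(\beta_k)+\len(\alpha_{k+1})+O(\delta^2)$, preserving (b), and $\Theta(t(\alpha_{k+1}),t(\beta_{k+1}))=O(\delta\, e^{-Q})$. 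The main technical point is to choose $Q$ large enough that the hidden constant times $e^{-Q}$ is at most $C_0$, so invariant (a) is preserved rather than degrading with $k$; this feedback contraction is what makes the induction go through uniformly in $n$.

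For the initial direction, the two-arc lemma gives $\Theta(i(\beta_k),i(\beta_{k+1}))=O(\delta\, e^{-\len(\beta_k)})=O(\delta\, e^{-kQ/2})$ at step $k$, and telescoping yields $\Theta(i(\alpha_1),i(\beta_n))=O(\delta\sum_{k\geq 0}e^{-kQ/2})=O(\delta)$, which is in particular $O(n\delta)$; the terminal bound $\Theta(t(\alpha_n),t(\beta_n))=O(\delta)$ is already guaranteed by invariant (a). Summing the length increments finishes the open case. For the closed case, the open case produces $\beta_n$ sharing endpoints with $\alpha_1\cdots\alpha_n$ whose initial and terminal tangents lie within $O(\delta)$ of $i(\alpha_1)$ and $t(\alpha_n)$; hence $\beta_n$ is a piecewise geodesic loop with a single corner at which $\Theta(t(\beta_n),i(\beta_n))=O(\delta)$. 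A one-corner closed-geodesic estimate, obtained by a hyperbolic triangle computation in the universal cover comparing $\beta_n$ with its free-homotopic closed geodesic, then gives $\len([\alpha_1\cdots\alpha_n])=\len(\beta_n)+O(\delta^2)$, yielding the final bound.
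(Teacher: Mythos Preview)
The paper does not actually prove this lemma; it simply cites \cite[Theorem 4.1 and Lemma 4.1]{KM-surface group}. Your proposal is correct and is essentially the standard argument that appears there: a two-arc straightening estimate from the hyperbolic cosine and sine rules, followed by an induction in which the terminal-direction error does not accumulate because the correction at each step is damped by the factor $e^{-Q}$. Your observation that one must choose $Q$ large enough to make this feedback contractive is exactly the point of the constant $Q$ in the statement, and your telescoping of the initial-direction errors into a geometric series is the right way to get the $O(\delta)$ (hence $O(n\delta)$) endpoint bound.

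Two minor remarks. First, your per-step length error is $O(\delta^2)$, so you in fact obtain the sharper total $O(n\delta^2)$; the paper only records $O(n\delta)$, which is all that is used downstream. Second, your closed-case reduction to a ``one-corner closed-geodesic estimate'' is correct but brief: the cleanest way to justify it is to lift $\beta_n$ to $\mathbb H^2$, note that the isosceles triangle on $p,gp,g^2p$ has apex angle $\pi-\theta$ with $\theta=O(\delta)$, deduce $d(p,\mathrm{axis}(g))=O(\delta)$ from the sine rule, and then use the standard comparison $\len(\beta_n)-\tau(g)=O(d(p,\mathrm{axis})^2)$. Alternatively one can bisect $\alpha_1$, apply the open case to the cyclic reordering, and then invoke the same one-corner fact; either route gives $\len([\alpha_1\cdots\alpha_n])=\len(\beta_n)+O(\delta^2)$.
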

The following lemma is a corollary of the Chain Lemma.
\begin{lemma}[Right Angle Chain]
	Let $Q$ be the constant from the Chain Lemma. Let $L\geq 2Q$. Suppose $\alpha_1,\dots,\alpha_n$ is a sequence of consecutive arcs such that 
	\begin{enumerate}
		\item for any $i=1,\dots,n$ we have $\len(\alpha_i)\geq L$
		\item for any $i=1,\dots,n-1$ we have 
		\[\left|\Theta\left(t(\alpha_i),i(\alpha_{i+1})\right)-\frac{\pi}{2}\right|\leq e^{-L}\]
	\end{enumerate}
	 Then the following estimates hold for some universal $O$
	\[
	\len\left([\cdot\alpha_1\cdots\alpha_n\cdot]\right)=\sum_i\len(\alpha_i)-(n-1)\log 2+O\left(ne^{-L}\right),
	\]
	\[
	\Theta\left(i(\alpha_1),i\left([\cdot\alpha_1\cdots\alpha_n\cdot]\right)\right),\Theta\left(t(\alpha_n),t\left([\cdot\alpha_1\cdots\alpha_n\cdot]\right)\right)=O\left(ne^{-L}\right).
	\] Moreover, assume that $\alpha_1\cdots\alpha_n$ is a piecewise geodesic curve, and
	\[\left|\Theta\left(t(\alpha_n),i(\alpha_1)\right)-\frac{\pi}{2}\right|\leq e^{-L},\]
	then the following estimate holds \[\len([\alpha_1\cdots\alpha_n])=\sum_i\len(\alpha_i)-n\log 2+O\left(ne^{-L}\right).\]
\end{lemma}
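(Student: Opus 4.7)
The plan is to prove the lemma by induction on $n$, with the case $n = 2$ handled by a direct hyperbolic-trigonometric computation and longer chains (as well as the closed-curve variant) reduced to that base case while carefully tracking the accumulated error in both length and tangent direction.

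For the base case $n=2$, I would form the geodesic triangle whose vertex at the junction point has interior angle $\pi/2 - \eta$ with $|\eta|\le e^{-L}$ (since the interior angle is $\pi - \Theta(t(\alpha_1),i(\alpha_2))$), and apply the hyperbolic law of cosines:
\[
\cosh c = \cosh a\,\cosh b \;-\; \sinh a\,\sinh b\,\sin\eta,
\]
where $a, b \geq L$ are the two arc lengths. Expanding asymptotically, the right-hand side equals $\tfrac{1}{4}e^{a+b}\bigl(1 + O(e^{-L})\bigr)$, giving $c = a + b - \log 2 + O(e^{-L})$. The angles at the two far vertices of the triangle satisfy a hyperbolic sine rule of the form $\sin\theta = \sinh(\text{opposite leg})/\sinh c$, which is $O(e^{-L})$; this gives the required control on how $i([\cdot\alpha_1\alpha_2\cdot])$ and $t([\cdot\alpha_1\alpha_2\cdot])$ deviate from $i(\alpha_1)$ and $t(\alpha_2)$.

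For the inductive step, suppose $\beta_k = [\cdot\alpha_1\cdots\alpha_k\cdot]$ has been constructed with length and terminal-direction estimates matching the lemma's form for $n = k$. Then the junction angle $\Theta(t(\beta_k), i(\alpha_{k+1}))$ still lies within $O((k+1)e^{-L})$ of $\pi/2$, and the length lower bound $\len(\beta_k)\ge L$ is preserved since the per-step loss of $\log 2$ is absorbed by the hypothesis $L\ge 2Q$. Applying the base case to the pair $(\beta_k, \alpha_{k+1})$ produces $\beta_{k+1}$, and the errors accumulate additively to the claimed $O(n e^{-L})$ bound. For the closed-curve version, I would first build $\beta_n$ as an open arc whose endpoints, by hypothesis on the chain, coincide as a single point $p$; the terminal-direction estimates together with the closing-up angle condition imply that $\Theta(t(\beta_n), i(\beta_n))$ lies within $O(n e^{-L})$ of $\pi/2$, so one final application of the base-case computation closes $\beta_n$ into a geodesic loop and produces the additional $-\log 2$.

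The main obstacle is the additive accumulation of directional error: each straightening step introduces its own $O(e^{-L})$ deviation of the new terminal tangent from $t(\alpha_{k+1})$, and this must remain well below $1$ for the base-case hypothesis that $|\Theta - \pi/2|$ is small to hold uniformly throughout the induction. As long as $n e^{-L}$ stays bounded by a small constant this is automatic, but the careful bookkeeping of how these angular errors propagate through each application of the law of cosines is where most of the technical work lies.
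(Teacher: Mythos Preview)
Your inductive approach is correct, but the paper takes a different and somewhat cleaner route. Rather than inducting on $n$ by repeatedly appending one arc and tracking how the angular error accumulates, the paper performs a single \emph{midpoint subdivision}: let $a_i$ be the midpoint of $\alpha_i$, set $\beta_i=[\cdot\,\alpha_i|_{[a_i,\cdot]}\,\alpha_{i+1}|_{[\cdot,a_{i+1}]}\,\cdot]$ for $1\le i\le n-1$, and let $\beta_0,\beta_n$ be the two end half-arcs. Your base-case computation (applied once at each right-angle corner, with both legs of length $\ge L/2$) shows that each $\beta_i$ has length $\len(\alpha_i)/2+\len(\alpha_{i+1})/2-\log 2+O(e^{-L})$ and that consecutive $\beta_i$ meet at angle $O(e^{-L})$. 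The sequence $\beta_0,\dots,\beta_n$ then satisfies the hypotheses of the ordinary Chain Lemma with $\delta=O(e^{-L})$, and one application of that lemma gives all the estimates at once, including the closed-curve version via its ``moreover'' clause. The advantage of the paper's route is that the error does not compound through an induction: each right-angle corner is straightened independently, and the Chain Lemma (already proved) absorbs the $n$-fold accumulation in a single step. Your approach works too, but the step where you close up the loop by ``one final application of the base-case computation'' is not literally an instance of your two-arc base case (you have a single geodesic loop with one corner, not two arcs), and making that precise requires either a separate closed-bigon estimate or a reduction like the paper's.
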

\begin{proof}
	Suppose $\eta_1,\eta_2$ are consecutive arcs such that $\len(\eta_i)\geq L/2$ for any $i=1,2$, and the following holds
	\[
	\left|\Theta\left(t(\eta_1),i(\eta_2)\right)-\frac\pi 2\right|\leq e^{-L}.
	\]
	Denote $\eta_3=[\cdot\eta_1\eta_2\cdot]$. Then by the hyperbolic trigonometry, we have
	\[
	\len(\eta_3)=\len(\eta_1)+\len(\eta_2)-\log 2+O\left(e^{-L}\right)
	\]
	\[
	\Theta\left(i(\eta_1),i(\eta_3)\right),\Theta\left(t(\eta_2),t(\eta_3)\right)=O\left(e^{-L}\right)
	\]
	for some universal $O$. Suppose $\alpha_1,\dots,\alpha_n$ is a sequence of consecutive arcs satisfying the conditions from the lemma. Let $a_i$ be the midpoint of $\alpha_i$ for any $i=1,\dots,n$, and $\beta_i$ be the geodesic arc connecting $a_i,a_{i+1}$ for any $i=1,\dots,n-1$. Let $\beta_0=\alpha_1|_{[0,a_1]}$ and $\beta_n=\alpha_n|_{[a_n,\len(\alpha_n)]}$. Then $\beta_0,\dots,\beta_n$ is a sequence of consecutive arcs satisfying the conditions from the Chain Lemma with $\delta=e^{-L}$. Thus the lemma follows from the Chain Lemma.
\end{proof}
\subsection{The theory of inefficiency}
In this subsection, we introduce the notion of semirandom maps, then we quote and revise lemmas in \cite[Subsection 4.2]{KM-Correction} for our purpose.
\begin{definition}
	Let $\alpha$ be an piecewise geodesic arc on a hyperbolic surface. By $\gamma$ we denote the geodesic arc with the same endpoints and homotopic to $\alpha$. We let $I(\alpha)=\len(\alpha)-\len(\gamma)$, and call $I(\alpha)$ the inefficiency of $\alpha$.
\end{definition}
\begin{lemma}\label{bounded distance}
	Let $\alpha$ denote an piecewise geodesic arc on $\mathbb H$, and let $\gamma$ be the geodesic arc with the same end points. Let $\pi:\alpha\rightarrow \gamma$ be the nearest point projection. Let 
	\[
	E(\alpha)=\sup_{x\in\alpha}d(x,\pi(x)). 
	\]
	Then $E(\alpha)\leq \frac 12I(\alpha)+\log 2$. 
\end{lemma}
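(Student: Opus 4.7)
The plan is to reduce the lemma to a triangle estimate in $\mathbb H^2$. For an arbitrary $x\in\alpha$, split $\alpha$ at $x$ into two piecewise-geodesic subarcs running from $x$ to the two endpoints $a,b$ of $\gamma$. Each such subarc has length at least the hyperbolic distance between its endpoints, so $\len(\alpha)\geq d(a,x)+d(x,b)$, and hence
\[
I(\alpha)\;\geq\; d(a,x)+d(x,b)-d(a,b).
\]
It therefore suffices to show that $d(x,\pi(x))\leq\tfrac{1}{2}\bigl(d(a,x)+d(x,b)-d(a,b)\bigr)+\log 2$; this is a purely geometric statement about the position of a single point relative to a geodesic segment.

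To prove that estimate I would let $x_0$ be the foot of the perpendicular from $x$ to the infinite geodesic extending $\gamma$, and split into two cases. If $x_0\in\gamma$ then $\pi(x)=x_0$, and the right-angled triangles $a x_0 x$ and $b x_0 x$ give $\cosh d(a,x)=\cosh d(a,x_0)\cosh d$ and $\cosh d(x,b)=\cosh d(x_0,b)\cosh d$, where $d=d(x,\pi(x))$. The key ingredient is the elementary inequality
\[
\cosh s\cosh d\;\geq\;\cosh\bigl(s+\log\cosh d\bigr)\qquad(s,d\geq 0),
\]
which after expanding via the addition formula reduces to $\cosh^{2}d\geq 1$. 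Applying this to each end yields $d(a,x)+d(x,b)\geq d(a,b)+2\log\cosh d$, hence $I(\alpha)\geq 2\log\cosh d$; since $\operatorname{arccosh}(y)\leq\log(2y)$, we conclude $d\leq I(\alpha)/2+\log 2$.

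If instead $x_0\notin\gamma$, say beyond $b$, then $\pi(x)=b$ and the angle at $b$ in the triangle $abx$ is obtuse (because $bx_{0}$ points away from $a$ while $\angle x b x_{0}<\pi/2$). The hyperbolic law of cosines then gives $\cosh d(a,x)\geq\cosh d(a,b)\cosh d(x,b)$, and the same elementary inequality produces $d(a,x)\geq d(a,b)+\log\cosh d(x,b)$. Combined with $\log\cosh s\geq s-\log 2$, this yields $I(\alpha)\geq 2d(x,\pi(x))-\log 2$, which is in fact a stronger bound than required. Taking the supremum over $x\in\alpha$ finishes the proof. The main technical point is the hyperbolic-trigonometric inequality displayed above; it is sharp and is precisely what produces the additive $\log 2$, achieved asymptotically by an isoceles configuration with apex far from $\gamma$.
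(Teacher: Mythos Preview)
Your argument is correct. The paper itself does not supply a proof of this lemma; it is stated in Subsection~3.3 as one of several facts quoted (with minor revisions) from \cite[Subsection~4.2]{KM-Correction}. So there is no paper proof to compare against, and what you have written stands on its own.

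Your reduction to the single-point estimate $d(x,\pi(x))\leq\tfrac12\bigl(d(a,x)+d(x,b)-d(a,b)\bigr)+\log 2$ is clean, and both cases go through as you describe. The key inequality $\cosh s\cosh d\geq\cosh(s+\log\cosh d)$ is indeed equivalent to $\sinh^2 d\geq 0$ after expansion, so the step you flag as the main technical point is sound. In Case~2 your bound $I(\alpha)\geq d'+\log\cosh d'\geq 2d'-\log 2$ (with $d'=d(x,\pi(x))$) actually yields $d'\leq\tfrac12 I(\alpha)+\tfrac12\log 2$, which as you note is stronger than needed. The argument is essentially the standard one for this type of estimate and would serve as a complete proof in the paper.
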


\begin{lemma}\label{new-angle}
	There exist a universal constant $C$ such that the following holds. Let $\alpha\beta$ be an arc on the hyperbolic plane, where $\alpha$ is a piecewise geodesic arc and $\beta$ is a geodesic arc. Suppose $\gamma$ is the geodesic arc with the same endpoints as $\alpha\beta$. If $\len(\beta)\geq I(\alpha\beta)+1$ then the unoriented angle between $\gamma$ and $\beta$ is at most $Ce^{I(\alpha\beta)-\len(\beta)}$.
\end{lemma}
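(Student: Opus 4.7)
The plan is to apply the hyperbolic law of cosines to the geodesic triangle formed by the three distinguished points on $\alpha\beta$. Label them $s$ (the starting point of $\alpha$), $p$ (the junction between $\alpha$ and $\beta$), and $q$ (the common endpoint of $\beta$ and $\gamma$). Write $L=\len(\beta)$, $G=\len(\gamma)$, and $I=I(\alpha\beta)=\len(\alpha)+L-G$. The key observation is that the unoriented angle $\theta$ between $\beta$ and $\gamma$ at $q$ is exactly the interior angle of the geodesic triangle $\triangle spq$ at $q$, since the two sides emanating from $q$ are subarcs of $\gamma$ and $\beta$ respectively.

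Applying the hyperbolic law of cosines at $q$ and using $\cosh L\cosh G-\sinh L\sinh G=\cosh(G-L)$ gives
\[
1-\cos\theta=\frac{\cosh d(s,p)-\cosh(G-L)}{\sinh L\,\sinh G}.
\]
Since $\alpha$ is piecewise geodesic from $s$ to $p$, one has $d(s,p)\leq\len(\alpha)=G+I-L$, so the identity $\cosh A-\cosh B=2\sinh\tfrac{A+B}{2}\sinh\tfrac{A-B}{2}$ bounds the numerator by $2\sinh(G-L+I/2)\sinh(I/2)$. The triangle inequality $|G-L|\leq d(s,p)\leq\len(\alpha)$ forces $\len(\alpha)\geq I/2$, equivalently $G-L+I/2\geq 0$, so all the $\sinh$'s above are non-negative and the bound is meaningful.

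Under the hypothesis $L\geq I+1$ we have $L\geq 1$ and $G=\len(\alpha)+L-I\geq L-I/2\geq 1$; the elementary estimates $\sinh(x)\leq e^x/2$ and $\sinh(x)\geq e^x/4$ (valid for $x\geq 1$) then yield $1-\cos\theta\leq 8\,e^{I-2L}$. Combining this with $1-\cos\theta\geq 2\theta^2/\pi^2$ on $[0,\pi]$, we conclude $\theta\leq 2\pi\,e^{I/2-L}\leq 2\pi\,e^{I-L}$, where the last step uses $I\geq 0$; so $C=2\pi$ works.

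The main obstacle is handling the borderline regime in which $L$, $G$, and $I$ are simultaneously close to the boundary of the hypothesis: one must verify the positivity of $G-L+I/2$ via the triangle inequality and control the $\sinh$-ratios so that they do not degenerate. A right-triangle approach via Lemma~\ref{bounded distance} (project $p$ perpendicularly onto the line carrying $\gamma$) is tempting and even yields a sharper bound in generic cases, but it requires separately analyzing the configuration in which the foot of the perpendicular falls beyond $q$ on the extension of $\gamma$; the cosine-law approach sidesteps this case distinction entirely.
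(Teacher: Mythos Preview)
The paper does not include a proof of this lemma; it is stated without proof in Section~3.3, which explicitly says the lemmas there are quoted (and revised) from \cite[Subsection~4.2]{KM-Correction}. So there is no in-paper argument to compare against.

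Your proof is correct. The identification of the angle $\theta$ with the interior angle of the geodesic triangle $spq$ at $q$ is exactly right, and the hyperbolic law of cosines gives the clean identity
\[
1-\cos\theta=\frac{\cosh d(s,p)-\cosh(G-L)}{\sinh L\,\sinh G}.
\]
The chain of inequalities is sound: $d(s,p)\le\len(\alpha)=G-L+I$ bounds the numerator by $2\sinh(G-L+I/2)\sinh(I/2)$; your triangle-inequality check that $G-L+I/2\ge 0$ (equivalently $\len(\alpha)\ge I/2$) is needed and correct in both cases $G\ge L$ and $G<L$; the lower bounds $L\ge 1$ and $G\ge L-I/2\ge 1$ under the hypothesis justify $\sinh L\ge e^L/4$ and $\sinh G\ge e^G/4$; and Jordan's inequality gives $1-\cos\theta=2\sin^2(\theta/2)\ge 2\theta^2/\pi^2$ on $[0,\pi]$. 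The final step $e^{I/2-L}\le e^{I-L}$ uses only $I\ge 0$.

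Your closing remark about the alternative perpendicular-projection approach (via Lemma~\ref{bounded distance}) is apt: that route would require a separate case when the foot of the perpendicular from $p$ lands beyond $q$, whereas the cosine law handles all configurations uniformly. Since the paper offers no argument of its own here, your self-contained proof with an explicit constant $C=2\pi$ is a genuine addition.
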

\begin{lemma}\label{bounded-ineff}
	Suppose that $\alpha\beta\gamma$ is a concatenation of three goedesic arcs in the hyperbolic plane, and let $\theta_{\alpha\beta}$ and $\theta_{\beta\gamma}$ be the two bending angles. Suppose $\theta_{\alpha\beta},\theta_{\beta\gamma}\leq 1.1$. Then $I(\alpha\beta\gamma)\leq D_1$ for some constant $D_1>0$.
\end{lemma}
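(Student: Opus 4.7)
The plan is to reduce Lemma \ref{bounded-ineff} to a single-bend inefficiency estimate, together with a hyperbolic angle-sum argument. First I would establish an auxiliary sub-lemma: for any two-segment concatenation $\alpha\beta$ of geodesic arcs in $\mathbb H$ with bending angle $\theta\leq\theta_0<\pi$, the inefficiency $I(\alpha\beta)$ is bounded by a constant $D_0(\theta_0)$. This follows from the hyperbolic law of cosines: writing $a=\len(\alpha)$, $b=\len(\beta)$, and $L'$ for the length of the geodesic connecting the two endpoints, one has $\cosh L'=\cosh a\cosh b+\sinh a\sinh b\cos\theta$. When $\min(a,b)$ is bounded, $I(\alpha\beta)=a+b-L'\leq 2\min(a,b)$ is controlled by the reverse triangle inequality; when $\min(a,b)\to\infty$ the inefficiency tends to $-2\log\cos(\theta/2)$, which is finite precisely because $\theta_0<\pi$. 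So $D_0(\theta_0):=\sup I(\alpha\beta)$ is a finite constant depending only on $\theta_0$.

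Next, I would label the endpoints of the concatenation $p_0,p_1,p_2,p_3$ so that $\alpha$ runs from $p_0$ to $p_1$, $\beta$ from $p_1$ to $p_2$, $\gamma$ from $p_2$ to $p_3$, and let $\eta_1$ denote the geodesic from $p_0$ to $p_2$. A direct telescoping yields the additivity identity
\[
I(\alpha\beta\gamma)=I(\alpha\beta)+I(\eta_1\gamma).
\]
Since the bending angle of $\alpha\beta$ at $p_1$ is $\theta_{\alpha\beta}\leq 1.1$, the sub-lemma gives $I(\alpha\beta)\leq D_0(1.1)$.

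The crux is to bound the bending angle of $\eta_1\gamma$ at $p_2$. Let $\delta$ denote the interior angle at $p_2$ of the (possibly degenerate) hyperbolic triangle $p_0p_1p_2$; geometrically, $\delta$ is the angle between $\eta_1$ and $\beta$ as they both arrive at $p_2$. The interior angle of that triangle at $p_1$ equals $\pi-\theta_{\alpha\beta}$, and since the angle sum of a hyperbolic triangle is strictly less than $\pi$, I obtain $\delta<\theta_{\alpha\beta}\leq 1.1$. Hence the bending angle of $\eta_1\gamma$ at $p_2$ is at most $\theta_{\beta\gamma}+\delta\leq 2.2<\pi$, so the sub-lemma applies once more and gives $I(\eta_1\gamma)\leq D_0(2.2)$. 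Combining the two estimates yields $I(\alpha\beta\gamma)\leq D_0(1.1)+D_0(2.2)=:D_1$.

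The main obstacle I anticipate is proving the sub-lemma uniformly across all arm lengths and in particular verifying it for bending angles as large as $2.2$, which arise after absorbing the first bend into the geodesic $\eta_1$. Degenerate configurations, such as $\len(\beta)=0$ or collinear $p_0,p_1,p_2$, reduce to a genuine single bend with total angle at most $\theta_{\alpha\beta}+\theta_{\beta\gamma}\leq 2.2<\pi$, and are again handled directly by the sub-lemma without separate case analysis.
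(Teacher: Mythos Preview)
Your argument is correct. The telescoping identity $I(\alpha\beta\gamma)=I(\alpha\beta)+I(\eta_1\gamma)$ holds because $\eta_1$ and $\alpha\beta$ share endpoints, the hyperbolic angle-sum bound $\delta<\theta_{\alpha\beta}$ is valid in the (non-degenerate) triangle $p_0p_1p_2$, and the resulting bending angle $2.2$ is safely below $\pi$ so the single-bend sub-lemma applies. For the sub-lemma itself, your limit-and-compactness sketch can be sharpened to a clean uniform bound: from $\cosh L'=\cos^2(\theta/2)\cosh(a+b)+\sin^2(\theta/2)\cosh(a-b)\geq\cos^2(\theta/2)\cosh(a+b)$ one gets directly $I(\alpha\beta)\leq-2\log\cos(\theta/2)+\log 2$, which disposes of all arm lengths at once and makes $D_0(\theta_0)$ explicit.

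As for comparison with the paper: the paper does not supply a proof of Lemma~\ref{bounded-ineff} at all. Section~3.3 merely quotes this lemma (together with Lemmas~\ref{bounded distance} and~\ref{new-angle}) from \cite[Subsection~4.2]{KM-Correction}. Your proposal therefore fills in a self-contained elementary argument where the paper simply cites the literature.
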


\section{Preliminaries on the theory of semirandom map}
In this section, we review the theory of semirandom map. The randomness condition in the Good Pants Homology Theorem is equivalent to the statement that the map $\Phi$ is $(R\epsilon^{-1})^m$-semirandom. To verify that the constructed map $\Phi$ satisfies the randomness condition, we will estimate the semirandom norm at each step of the construction. 

\subsection{Definition of semirandom map}
For any set $X$, by $\mathbb RX$ we denote the vector space of finite formal sums of points in $X$. Let $f:X\rightarrow \mathbb RY$ be a map and $f(x)=\sum_yf_x(y)y$. For any measure $m$ on $X$, we define the measure $|f|_*m$ on $Y$ as follows
	\[
	|f|_*m(V)=\int_X\left(\sum_{y\in V}|f_x(y)|\right)dm(x).
	\]
	\begin{definition}
		Let $X,Y$ be two measurable spaces equipped with measure class $\Sigma_X,\Sigma_Y$ respectively. A map $f:X\rightarrow \mathbb RY$ is $K$-semirandom if for any measure $\sigma_X\in\Sigma_X$ there exists a measure $\sigma_Y\in\Sigma_Y$ such that 
		\[
		|f|_*\sigma_X\leq K\sigma_Y.\] 	
		The semirandom norm of a map $f$ is defined by
	 \[
	 ||f||_{s.r.}=\inf\,\{K:\text{f is $K$-semirandom }\}
	 \]
	\end{definition}
	
	We say that a class of measures is convex if it contains all convex combinations of its elements. The following two propositions are elementary. 
		
	\begin{proposition}
		If $f_i : X \rightarrow\mathbb RY$ is $K_i$-semirandom with respect
		to classes of measures $\Sigma_X$ and $\Sigma_Y$, $i = 1,2$, and if $\Sigma_Y$ is convex, then for $\lambda_i \in\mathbb R$, the map $(\lambda_1 f_1 + \lambda_2 f_2) : X \rightarrow \mathbb RY$ is $(\lambda_1 K_1 + \lambda_2 K_2)$-semirandom with respect to $\Sigma_X$ and $\Sigma_Y$.
	\end{proposition}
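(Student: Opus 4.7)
The plan is a direct unwinding of definitions, using the triangle inequality coefficient-by-coefficient in $\mathbb{R}Y$ together with the convexity hypothesis on $\Sigma_Y$. I would implicitly read the conclusion with $|\lambda_i|$ in place of $\lambda_i$, since the statement otherwise fails for negative scalars (the definition of $|f|_*m$ and the constant $K$ are manifestly nonnegative).

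First I would fix an arbitrary $\sigma_X \in \Sigma_X$ and observe that, writing $f_i(x) = \sum_y (f_i)_x(y)\, y$, one has $(\lambda_1 f_1 + \lambda_2 f_2)_x(y) = \lambda_1 (f_1)_x(y) + \lambda_2 (f_2)_x(y)$, so the scalar triangle inequality gives
\[
|(\lambda_1 f_1 + \lambda_2 f_2)_x(y)| \;\leq\; |\lambda_1|\,|(f_1)_x(y)| + |\lambda_2|\,|(f_2)_x(y)|
\]
at every $(x,y)$. Summing over $y \in V$ and integrating against $\sigma_X$, exactly as in the definition of $|\cdot|_*\sigma_X$, yields the measure inequality
\[
|\lambda_1 f_1 + \lambda_2 f_2|_*\sigma_X \;\leq\; |\lambda_1|\cdot|f_1|_*\sigma_X + |\lambda_2|\cdot|f_2|_*\sigma_X
\]
on $Y$.

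Second, I would apply $K_i$-semirandomness of each $f_i$ to produce $\sigma_Y^{(i)} \in \Sigma_Y$ with $|f_i|_*\sigma_X \leq K_i\,\sigma_Y^{(i)}$, so the right-hand side above is dominated by $|\lambda_1|K_1\,\sigma_Y^{(1)} + |\lambda_2|K_2\,\sigma_Y^{(2)}$. Third, I would invoke convexity of $\Sigma_Y$: provided $|\lambda_1|K_1 + |\lambda_2|K_2 > 0$, the normalized combination
\[
\sigma_Y \;=\; \frac{|\lambda_1|K_1\,\sigma_Y^{(1)} + |\lambda_2|K_2\,\sigma_Y^{(2)}}{|\lambda_1|K_1 + |\lambda_2|K_2}
\]
lies in $\Sigma_Y$, and rearranging gives $|\lambda_1 f_1 + \lambda_2 f_2|_*\sigma_X \leq (|\lambda_1|K_1 + |\lambda_2|K_2)\,\sigma_Y$, which is exactly the required bound. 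The degenerate case $\lambda_1 = \lambda_2 = 0$ is disposed of by taking any element of $\Sigma_Y$ (both sides vanish).

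There is no real obstacle here, as the proposition is essentially formal. The one conceptual point worth flagging is that convexity of $\Sigma_Y$ is used at precisely one place: to collapse the two witnessing measures $\sigma_Y^{(1)}, \sigma_Y^{(2)}$ into a single member of $\Sigma_Y$. Without that hypothesis, the argument would only produce a domination by a nonnegative combination of elements of $\Sigma_Y$, which does not match the form demanded by the definition of semirandomness.
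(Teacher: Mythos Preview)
Your proof is correct and is precisely the natural argument; the paper itself omits the proof entirely, declaring the proposition elementary, so there is nothing to compare against beyond noting that your unwinding of the definitions is exactly what the authors had in mind. Your remark that the conclusion should read $|\lambda_1|K_1 + |\lambda_2|K_2$ rather than $\lambda_1 K_1 + \lambda_2 K_2$ is well taken.
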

	\begin{proposition}
		Let $X,Y$ and $Z$ denote three spaces with classes of measures $\Sigma_X,\Sigma_Y,\Sigma_Z$ respectively. If $f:X\rightarrow\mathbb RY$ is $K$-semirandom with respect to $\Sigma_X$ and $\Sigma_Y$, and $g:Y\rightarrow\mathbb RZ$ is $L$-semirandom with respect to $\Sigma_Y$ and $\Sigma_Z$. Then $g\circ f:X\rightarrow\mathbb RZ$ is $KL$-semirandom with respect to $\Sigma_X$ and $\Sigma_Z$.
	\end{proposition}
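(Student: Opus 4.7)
The plan is to expand the composition coefficient by coefficient and then chain the two hypothesized semirandom bounds, with the triangle inequality serving as the only nontrivial step. After extending $g$ linearly to a map $\mathbb{R}Y\to\mathbb{R}Z$, I would write $(g\circ f)(x)=\sum_{z}\bigl(\sum_y f_x(y)\,g_y(z)\bigr)z$ and observe that for any $\sigma_X\in\Sigma_X$ and any measurable $V\subset Z$, the triangle inequality together with Fubini yields
\[
|g\circ f|_*\sigma_X(V)\leq\int_X\sum_y|f_x(y)|\sum_{z\in V}|g_y(z)|\,d\sigma_X(x)=\int_Y\Bigl(\sum_{z\in V}|g_y(z)|\Bigr)\,d(|f|_*\sigma_X)(y).
\]

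With that identification in hand, the proof is a two-step chain. First, I would apply the $K$-semirandomness of $f$ to $\sigma_X$ to produce $\sigma_Y\in\Sigma_Y$ with $|f|_*\sigma_X\leq K\sigma_Y$; substituting turns the right hand side into $K\cdot|g|_*\sigma_Y(V)$. Second, I would apply the $L$-semirandomness of $g$ to this $\sigma_Y$ to produce $\sigma_Z\in\Sigma_Z$ with $|g|_*\sigma_Y\leq L\sigma_Z$; composing the two estimates yields $|g\circ f|_*\sigma_X(V)\leq KL\,\sigma_Z(V)$, which is precisely the defining inequality for $g\circ f$ being $KL$-semirandom with respect to $\Sigma_X$ and $\Sigma_Z$.

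There is essentially no real obstacle here: the argument is a bookkeeping exercise whose only analytic input is the triangle inequality on the coefficients $f_x(y)$. In contrast with the previous proposition on linear combinations, no convexity of $\Sigma_Y$ or $\Sigma_Z$ is required, since at each stage one only needs to pick a single witness measure rather than to average two. The one place to be mildly careful is to verify that replacing $(g\circ f)_x(z)=\sum_y f_x(y)g_y(z)$ by the pointwise bound $\sum_y|f_x(y)||g_y(z)|$ only loses a factor the triangle inequality controls, so that the subsequent substitutions truly respect the ordering of measures.
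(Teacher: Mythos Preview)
Your argument is correct and is exactly the elementary verification one would expect. The paper does not actually supply a proof of this proposition: it simply declares the two propositions following the definition of semirandom maps to be ``elementary'' and moves on. Your write-up fills in precisely that omitted bookkeeping, and your remark that no convexity of $\Sigma_Y$ is needed here (in contrast to the linear-combination proposition) is also accurate.
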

	\begin{notation}[Partial Map]
		The maps in the subsequent sections are established via explicit construction. We make the following convention, which will be used frequently. Suppose $X,Y$ are spaces with measure classes $\Sigma_X,\Sigma_Y$ respectively. Consider a statement of the form: "For any element $x\in X$ satisfying condition A, there exists an element $f(x)\in\mathbb RY$ satisfying condition B." We say that this defines a partial map $f$ from $X$ to $\mathbb RY$, that is, the map is defined on the subset of $X$ where condition A holds. Any such partial map can be extended to a map defined on $X$ by setting the value zero for any elements that do not satisfy condition A. We define the semirandom norm of the partial map to be equal to the semirandom norm of the extended map.
	\end{notation}
	\subsection{Natural measure classes}
	In this paper, we consider the following spaces and their measure classes:
	\begin{enumerate}
		\item The space $\{1\}$ with the measure class containing the single measure $\sigma_1(1)=1$.
		\item The space of all geodesic arcs $Conn$ with the measure class containing the single measure $\sigma_C$ which is defined by setting
	\[
	\sigma_C(A)=e^{-\len(A)}.
	\]
	 Suppose $u,v$ are unit vectors.  We consider $Conn_{\epsilon,R}(u,v)$ as a subset of $Conn$ with the restricted measure $\sigma_C$.
	 \item By $G$ we denote the fundamental group of $M$ with base point $*$. The group element $A$ is identified with the geodesic arc homotopic to $A$ fixing the end points $*$. We consider the fundamental group $G$ with measure class containing the single measure $\sigma_G$ which is defined by setting 
	\[
	\sigma_G(A)=e^{-\len(A)}.\]
	 	\item The space of good curves $\Gamma_{\epsilon,R}$ with the measure class containing the single measure $\sigma_\Gamma$ which is defined by setting 
				\[\sigma_\Gamma(\gamma)=Re^{-2R}.\]
		\item The space of good pants $\PPi_{\epsilon,R}$ with the measure class containing the single measure $\sigma_{\PPi}$ which is defined by setting
		\[
		\sigma_{\PPi}(\Pi)=e^{-3R}.\]
		
	\end{enumerate}
		Note that the randomness condition from Good Pants Homology Theorem is equivalent to the statement that the map 
		\[\Phi:\Gamma_{\epsilon,R}\rightarrow\mathbb Q\PPi_{\epsilon,R}\]
		 is $(R\epsilon^{-1})^{m}$-semirandom with respect to the measure classes $\sigma_\Gamma$ and $\sigma_{\PPi}$.

		 \subsection{Standard maps are semirandom}
		   In this subsection, we estimate the semirandom norms of some standard maps.
		\begin{proposition}\label{forgetful}
			There exists a constant $C=C(M)$ such that the following holds. Let $u,v$ be two unit vectors.  For any $R>0$ the forgetful map 
			\[
			ft:Conn_{1,R}(u,v)\rightarrow\{1\}
			\]
			mapping each arc to the element $1$ is $C$-semirandom.
		\end{proposition}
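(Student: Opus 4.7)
The plan is to compute $|ft|_*\sigma_C$ directly and bound it by a constant multiple of $\sigma_1$. Since $\{1\}$ is a single point with $\sigma_1(\{1\})=1$, the definition of the pushforward unwinds to
\[
|ft|_*\sigma_C(\{1\})=\sum_{A\in Conn_{1,R}(u,v)}e^{-\len(A)},
\]
and this sum is exactly the quantity one needs to bound by a constant depending only on $M$.

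Each arc $A\in Conn_{1,R}(u,v)$ satisfies $|\len(A)-R|\leq 1$, so $e^{-\len(A)}\leq e\cdot e^{-R}$, and hence the real work is the counting estimate $\#Conn_{1,R}(u,v)\leq C_1(M)\,e^R$. This is the step I expect to be the main obstacle, though it is by now standard. The most direct route is to view the geodesic arcs from $\mathrm{base}(u)$ to $\mathrm{base}(v)$ of length in $[R-1,R+1]$ as lifts of elements in a coset of $\pi_1(M)$ whose geodesic representatives have length in a bounded window around $R$, and then appeal to the usual exponential volume growth of orbits in the universal cover. Alternatively, one can argue entirely with Corollary \ref{testimate}: cover $Conn_{1,R}(u,v)$ by orthogeodesic boxes $Conn_{A_\theta,B_\psi}(L,L+\delta^2)$ where $\delta=e^{-q_0R}$, $A_\theta,B_\psi$ are perpendicular segments of length $\delta^2$ based at $\mathrm{base}(u),\mathrm{base}(v)$ and indexed by $\delta^2$-nets in the unit-angle balls around $u,v$, and $L$ ranges over a $\delta^2$-partition of $[R-1,R+1]$; each box contributes $O(e^R\delta^6)$ arcs while there are $O(\delta^{-6})$ boxes, producing the same $O(e^R)$ total.

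Combining these two ingredients gives $|ft|_*\sigma_C(\{1\})\leq eC_1(M)$, so $ft$ is $C$-semirandom for $C=eC_1(M)$. A mildly subtle point is ensuring the counting estimate is valid for all $R>0$: for $R\leq R_0(M)$ a separate elementary bound on $\#Conn_{1,R}(u,v)$, using the discreteness of $\pi_1(M)$ together with the compactness of the unit tangent bundle, furnishes a uniform bound, and this regime merges with the large-$R$ estimate to yield a single constant $C(M)$ valid for every $R>0$.
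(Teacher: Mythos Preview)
Your proposal is correct and follows essentially the same approach as the paper: compute $|ft|_*\sigma_C(\{1\})=\sum_{A}e^{-\len(A)}\leq e\cdot e^{-R}\cdot\#Conn_{1,R}(u,v)$ and bound the count by $C(M)e^R$. The paper simply cites Margulis's classical orbit-counting result for the latter step, whereas you sketch two ways to obtain it (orbit growth in the universal cover, or tiling by Connecting-Principle boxes) and are more explicit about the small-$R$ regime; but the structure of the argument is identical.
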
    
		\begin{proof}
			By the classical counting results \cite{Margulis}, there exists a constant $C=C(M)$ such that for any $u,v$ we have 
			\[
			\#Conn_{1,R}(u,v)\leq Ce^{R-1}
			\]
			Thus, we have the following estimate
			\begin{equation*}
			\begin{aligned}
				|ft|_*\sigma_C(1)=\sum e^{-\len(A)}&\leq\#Conn_{\epsilon,R}(u,v)e^{-R+1}\leq C=C\sigma_1(1)
			\end{aligned}
			\end{equation*}
			where the summation is taken over all $A\in Conn_{1,R}(u,v)$.
		\end{proof}
	\begin{proposition}\label{boundarysr}
	 	The boundary map 
	 	\[\partial :\PPi_{1,R}\rightarrow\mathbb Z\Gamma_{1,R}\] is $C$-semirandom map for some constant $C=C(M)$.
	 \end{proposition}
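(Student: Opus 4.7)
My plan is to reduce the proposition to the counting estimate on pants with a prescribed cuff already provided by Theorem \ref{equidistribution}. Since the measure class on $\PPi_{1,R}$ contains only $\sigma_{\PPi}$ with $\sigma_{\PPi}(\Pi)=e^{-3R}$, and the measure class on $\Gamma_{1,R}$ contains only $\sigma_{\Gamma}$ with $\sigma_{\Gamma}(\gamma)=Re^{-2R}$, the $C$-semirandom condition is equivalent to the pointwise inequality
\[
|\partial|_{*}\sigma_{\PPi}(\{\gamma\})\leq C\,\sigma_{\Gamma}(\{\gamma\})\qquad\text{for every }\gamma\in\Gamma_{1,R}.
\]

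My first step is to unfold the left-hand side using the definition of $|\cdot|_{*}$ from Subsection 4.1: it equals $e^{-3R}\sum_{\Pi\in\PPi_{1,R}}|\partial_{\Pi}(\gamma)|$, where $\partial_{\Pi}(\gamma)$ denotes the coefficient of $\gamma$ in $\partial\Pi$. Because $\partial\Pi$ is the sum of exactly three cuffs (with $\overline{\gamma}$ identified with $-\gamma$), each $|\partial_{\Pi}(\gamma)|$ is at most $3$ and is nonzero only when $\gamma$ appears, in some orientation, as a cuff of $\Pi$. Hence the sum is bounded above by $3(K_{\gamma}+K_{\overline{\gamma}})$, where $K_{\gamma}$ is the number of pants in $\PPi_{1,R}$ carrying the oriented curve $\gamma$ as a boundary.

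My second step is to quote the counting bound $K_{\gamma},K_{\overline{\gamma}}\asymp Re^{R}$, which is item (1) of Theorem \ref{equidistribution} specialised to $\epsilon=1$. Combining the two steps,
\[
|\partial|_{*}\sigma_{\PPi}(\{\gamma\})\leq 3(K_{\gamma}+K_{\overline{\gamma}})\,e^{-3R}\leq C_{1}Re^{-2R}=C_{1}\sigma_{\Gamma}(\{\gamma\}),
\]
giving the desired $C$-semirandom inequality with $C=C_{1}$.

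The only minor caveat is that Theorem \ref{equidistribution} is stated for sufficiently large $R$; for smaller $R$ the sets $\Gamma_{1,R}$ and $\PPi_{1,R}$ have cardinality bounded in terms of $M$ alone, so the finitely many remaining cases can be absorbed by enlarging $C$. There is no real obstacle here, since the substantive counting work has already been carried out in Subsection 2.3 via the third-connection construction and the Connecting Principle; this proposition is essentially a direct corollary of the first item of Theorem \ref{equidistribution}, together with the observation that the natural measures on $\PPi_{1,R}$ and $\Gamma_{1,R}$ were chosen precisely to match the growth rates $\#\PPi_{1,R}\asymp e^{3R}$ and $\#\Gamma_{1,R}\asymp R^{-1}e^{2R}$ appearing in the prime geodesic theorem and its pants analogue.
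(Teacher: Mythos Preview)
Your proposal is correct and follows essentially the same approach as the paper: both reduce to the pointwise inequality $|\partial|_*\sigma_{\PPi}(\gamma)\leq C\sigma_\Gamma(\gamma)$ and invoke item (1) of Theorem \ref{equidistribution} to bound the number of pants having $\gamma$ as a cuff by a constant times $Re^R$. You are slightly more careful (tracking the factor $3$, both orientations $\gamma,\overline\gamma$, and the caveat about small $R$), but the argument is the same.
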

	 \begin{proof}
	 	Given $\gamma\in\Gamma_{1,R}$. According to the first property in Theorem \ref{equidistribution}, there are at most $CRe^R$ pair of pants that has $\gamma$ as a boundary for some constant $C=C(M)$. Thus, we have the following estimate
	 	\[
	 	|\partial |_*\sigma_{\PPi}(\gamma)=\sum_{\Pi}\sigma_{\PPi}(\Pi)\leq\frac{CRe^R}{e^{3R}}=C\sigma_\Gamma(\gamma)
	 	\]
	 	where the summation over all pair of pants that has a boundary $\gamma$. This implies that the boundary map is a $C$-semirandom map.
	 \end{proof} 
	 Recall that for any closed curve $\gamma$ and any third connection $\eta\in Conn_{\gamma}$, by $ Thd(\gamma,\eta)$ we denote the pair of pants constructed by the third connection construction. We consider the subset of $\Gamma\times Conn$ 
	\[\mathcal F\Gamma_{1,R}=\{(\gamma,\eta): \gamma\in\Gamma_{1,R},\eta\in Conn_\gamma, Thd(\gamma,\eta)\in\PPi_{1,R}\}\]
	equipped with the restricted measure $\sigma_\Gamma\times \sigma_C$.
	 \begin{proposition}\label{thirdsr}
	 	The third connection map 
	 	\[Thd:\mathcal F\Gamma_{1,R}\rightarrow\PPi_{1,R}\] is $CR$-semirandom with respect to the measure classes $\sigma_\Gamma\times\sigma_C$ and $\sigma_{\PPi}$ for some universal constant $C$.
	 \end{proposition}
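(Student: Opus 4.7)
The plan is to bound $|Thd|_{*}(\sigma_\Gamma \times \sigma_C)(\Pi)$ directly for each $\Pi \in \PPi_{1,R}$ and show it is at most $CR \cdot \sigma_\PPi(\Pi) = CR \cdot e^{-3R}$ for some $C = C(M)$. Since each pair $(\gamma,\eta)$ in the domain contributes mass $\sigma_\Gamma(\gamma) \cdot \sigma_C(\eta) = Re^{-2R} \cdot e^{-\len(\eta)}$, it suffices to enumerate $Thd^{-1}(\Pi)$ and estimate $\len(\eta)$ on it.

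First I would enumerate $Thd^{-1}(\Pi)$. Since $Thd_\gamma : Conn_\gamma \to \PPi_\gamma$ is a bijection (as recalled just before the statement), a preimage pair $(\gamma,\eta)$ is determined by the choice of one of the three oriented cuffs $\gamma$ of $\Pi$, with $\eta$ then the unique separating orthogeodesic in $\Pi$ joining $\gamma$ to itself. Therefore $\#Thd^{-1}(\Pi) \leq 3$.

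Next I would pin down $\len(\eta)$. The feet of $\eta$ divide $\gamma$ into two subarcs $\sigma_1,\sigma_2$, and the hyperbolic trigonometry identity used in defining the region $\mathcal{R}(\gamma,I)$ in the proof of Theorem \ref{equidistribution} gives, for $i = 1,2$,
\begin{equation*}
\cosh\bigl(\mathbf{hl}(C_i)\bigr) = \sinh\bigl(\len(\sigma_i)/2\bigr)\sinh\bigl(\len(\eta)/2\bigr),
\end{equation*}
where $C_1,C_2$ are the other two cuffs of $\Pi$. Since all three cuffs lie in $\Gamma_{1,R}$, $\mathbf{hl}(C_i) = R + O(1)$; dividing the two equations and using $\len(\sigma_1) + \len(\sigma_2) = \len(\gamma) = 2R + O(1)$ forces $\len(\sigma_i) = R + O(1)$, and substituting back gives $\len(\eta) = R + O(1)$.

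Combining the two steps,
\begin{equation*}
|Thd|_{*}(\sigma_\Gamma \times \sigma_C)(\Pi) \leq 3 \cdot Re^{-2R} \cdot e^{-R + O(1)} = O(R) \cdot e^{-3R} = O(R) \cdot \sigma_\PPi(\Pi),
\end{equation*}
yielding the claimed $CR$-semirandom bound. There is no genuine obstacle here: the only calculation with any content is the hyperbolic trigonometry estimate in the second step, which is essentially the same identity already invoked in the proof of Theorem \ref{equidistribution}.
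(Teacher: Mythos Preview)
Your proof is correct and follows essentially the same approach as the paper: enumerate the (at most three) preimages of $\Pi$ corresponding to the three cuffs, use the lower bound $\len(\eta)\geq R-C_1$ for a universal $C_1$, and sum the contributions to get $|Thd|_*(\sigma_\Gamma\times\sigma_C)(\Pi)\leq 3e^{C_1}R\,\sigma_{\PPi}(\Pi)$. The only difference is that you spell out the hyperbolic trigonometry justifying the length estimate on $\eta$, whereas the paper simply asserts it; note also that the resulting constant is universal (as the proposition requires), not merely $C(M)$ as you wrote at the outset.
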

	 \begin{proof}
	 	For any $\Pi\in\PPi_{1,R}$, let $\gamma_1,\gamma_2,\gamma_3$ be three boundaries of $\Pi$ and let $\eta_1,\eta_2,\eta_3$ be the geodesic third connection respectively. Since $\len(\eta_i)\geq R-C_1$ for some universal constant $C_1$, the following holds
	 	\[
	 	|Thd|_*\sigma_\Gamma\times\sigma_C(\Pi)=\sum_i\sigma_{\Gamma}(\gamma_i)\sigma_C(\eta_i)\leq 3e^{C_1}Re^{-3R}=3e^{C_1}R\text{ }\sigma_{\PPi}(\Pi)
	 	\]
	 	This implies that the third connection map is $3e^{C_1}R$-semirandom.
	 \end{proof}

		\subsection{The randomization method} 
		In this subsection, we develop the method of randomization. \par 
	 Suppose $X,Y,Z$ are spaces with measure classes $\Sigma_X,\Sigma_Y,\Sigma_Z$ respectively. Consider a map $f^*:X\times Y\rightarrow \mathbb RZ$ where $X\times Y$ is endowed with the product measure classes $\Sigma_X\times\Sigma_Y$. Given an element $\underline B_Y\in\mathbb R Y$. We define the ramdomization of $f^*$ with respect to $\underline B_Y$ as the map $f:X\rightarrow\mathbb RZ$ given by$f(x)=f^*(x,\underline B_Y)$. The semirandom norm of the randomized map $f$ can be estimated by 
	\[
	||f||_{s.r.}\leq ||f^*||_{s.r.}\cdot||\underline B_Y||_{s.r.},
	\]
	where the semirandom norm of an element is defined as follows.
	 \begin{definition}[Semirandom Norm of Element]
	  Let $Y$ be a space with measure classes $\Sigma_Y$. An element $B\in\mathbb R Y$ is called $K$-semirandom if the map $
	 \{1\}\rightarrow\mathbb RY$
	 mapping $1$ to the element $B$ is $K$-semirandom with respect to the measure classes $\sigma_1$ and $\Sigma_Y$. By $||B||_{s.r.}$ we denote the semirandom norm of this map.
	 \end{definition} 
	 In particular, when $f:X\times Y\rightarrow \mathbb Z Z$ is an integer-valued map and $\underline B_Y\in \frac{\mathbb Z}{N} Y$ has an integer denominator $N$. The coefficients of the randomized map are effectively controlled by
	 \[
	 f:X\rightarrow \frac{\mathbb Z}{N}Z.
	 \] The following proposition establishes the existence of a random element, whose coefficients have effectively controlled denominators.  
	 
	\begin{lemma}[Random Element]\label{randomization}
		Let $X=Conn_{\epsilon,L}(u,v)$. Suppose  $2R\geq L\geq L(\epsilon)$, where 
	\[L(\epsilon)=\max\{-q_0^{-1}\log\epsilon,-\log\epsilon,L_0\}\]
	is the constant defined on the Connection Lemma. Then there exists an element $\underline B_X\in\frac{\mathbb Z^+}{\lfloor e^{2R}\rfloor}X$ such that
	  \[
	  \sum_{x\in X}\underline B_X(x)=1,
	  \] 
	  and $||\underline B_X||_{s.r.}\leq C\epsilon^{-3}$, for some constant $C$ depending only on $M$. Such an element is called a $(\epsilon,R)$-effectively random element in $X$.
	\end{lemma}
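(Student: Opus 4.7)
The plan is to construct $\underline{B}_X$ by distributing total mass $1$ over $X = Conn_{\epsilon,L}(u,v)$ as uniformly as the fixed denominator $N := \lfloor e^{2R}\rfloor$ allows, and then to use the Connection Lemma to control the resulting coefficients. Set $\mathcal{M} = \#X$. Since $L \geq L(\epsilon)$, the Connection Lemma yields $\mathcal{M} \geq C_0 e^L \epsilon^3$ for some $C_0 = C_0(M)$. Dividing with remainder, I would write $N = q\mathcal{M} + r$ with $0 \leq r < \mathcal{M}$, pick an arbitrary subset $X_0 \subset X$ of cardinality $r$, and set $\underline{B}_X(x) = (q+1)/N$ for $x \in X_0$ and $\underline{B}_X(x) = q/N$ otherwise. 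By construction $\underline{B}_X \in \frac{\mathbb{Z}^+}{N} X$ and $\sum_x \underline{B}_X(x) = 1$.

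Next, I would deduce the semirandom norm bound from an elementary pointwise estimate on the coefficients. Every coefficient satisfies $\underline{B}_X(x) \leq (q+1)/N \leq 1/\mathcal{M} + 1/N$. The Connection Lemma gives $1/\mathcal{M} \leq C_0^{-1} e^{-L} \epsilon^{-3}$, while the hypothesis $L \leq 2R$ (together with $R$ large enough that $N \geq e^{2R}/2$) gives $1/N \leq 2 e^{-2R} \leq 2 e^{-L}$. Combining these, $\underline{B}_X(x) \leq C_1 e^{-L} \epsilon^{-3}$. Since $\len(x) \in [L-\epsilon, L+\epsilon]$ for every $x \in X$ and $\epsilon \leq 1$, one has $\sigma_C(x) = e^{-\len(x)} \geq e^{-1} e^{-L}$, so $\underline{B}_X(x) \leq C_2 \epsilon^{-3} \sigma_C(x)$ pointwise. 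Unwinding the definition of the semirandom norm of an element, this is exactly the assertion $|\underline{B}_X|_* \sigma_1 \leq C_2 \epsilon^{-3} \cdot \sigma_C|_X$, so $\|\underline{B}_X\|_{s.r.} \leq C_2 \epsilon^{-3}$.

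There is no substantive obstacle beyond ensuring that the quantization error $1/N$ is dominated by the natural scale $1/\mathcal{M}$, which is precisely where the hypothesis $L \leq 2R$ is used. In essence, the lemma repackages the lower bound $\#X \gtrsim e^L \epsilon^3$ from the Connection Lemma as a semirandom statement, with a denominator compatible with the integer-coefficient bookkeeping needed elsewhere in the paper.
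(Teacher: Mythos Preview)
Your proposal is correct and follows essentially the same approach as the paper: construct an almost-uniform element with denominator $\lfloor e^{2R}\rfloor$, bound each coefficient by $1/\#X + 1/\lfloor e^{2R}\rfloor$, and then invoke the Connection Lemma lower bound $\#X \gtrsim e^L\epsilon^3$ to finish. The only cosmetic difference is that the paper routes the $1/\lfloor e^{2R}\rfloor$ term through Margulis's upper bound $\#X \leq C_1 e^L$ to compare it with $1/\#X$, whereas you bound it directly by $2e^{-L}$ using $L\leq 2R$; your version is in fact slightly cleaner and avoids the extra citation.
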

	 			
	\begin{proof}
		Let $\underline B_X\in\frac{\mathbb Z^+}{\lfloor e^{2R}\rfloor}X$ be an element almost averaging the weight such that 
		\[
		\sum_{x\in X}\underline B_X(x)=1,
		\]
		and 
		\begin{equation*}
			\begin{aligned}
				\left|\underline B_X(x)-\frac{1}{\#X}\right|\leq \frac{1}{\lfloor e^{2R}\rfloor}&\text{. Thus, }&\left|\underline B_X(x)\right|\leq \frac{1}{\# X}+\frac{1}{\lfloor e^{2R}\rfloor}.
			\end{aligned}
		\end{equation*}
		By the classical counting result \cite{Margulis}, there are at most $C_1e^L$ many geodesic arcs in $Conn_{\epsilon,L}(u,v)$ for some constant $C_1=C_1(M)$. Thus $C_1^{-1}\#X\leq e^L\leq e^{2R}$. Combining this with the above estimate, we get
		\[
		\left|\underline B_X(x)\right|\leq \frac{1}{\#X}+\frac{1}{C_1^{-1}\#X}\leq  \frac{1+C_1}{C_2}\epsilon^{-3}e^{-L},
		\]
		where $C_2=C_2(M)$ is the constant from the Connection Lemma. Let $C=\frac{1+C_1}{C_2}$. By definition $||\underline B_X||_{s.r.}\leq C\epsilon^{-3}$. This completes the proof.
	\end{proof}

	\section{The replacement map and the itemization theorems} 
	In this section, we establish the most technical part in the proof of the Good Pants Homology Theorem, which is structurally modeled on (but simplified) the corresponding constructions in \cite[Section 5-7]{KM-Correction}.\par 
	We define the replacement map in this section. In brief, letting $u\in T^1M$ be a unit vector in $M$, the replacement map $\mathcal R$ maps a suitable element $A\in Conn_{\epsilon^2,<\infty}(-u,u)$ to a multi-curve
	\[
		\mathcal R(A)=\frac 12\left([A\underline B_A]-[\bar A\underline B_A]\right),
		\]
	where $\underline B_A$ is a suitable random element in $Conn_{\epsilon^2,<\infty}(u,-u)$, such that $\mathcal R(A)$ is a good multi-curve and has the same homology class as $A$. Similarly, we can replace $B\in Conn_{\epsilon^2,<\infty}(u,-u)$ with a multi-curve $\mathcal R(B)$.\par 
	Next, in the Two-Part Itemization Theorem for any suitable elements $A\in Conn_{\epsilon^2,<\infty}(-u,u)$ and $B\in Conn_{\epsilon
	^2,<\infty}(u,-u)$ such that $[AB]$ is a good curve, we construct a good multi-pants $Item_2$ such that
	\begin{equation*}
			\begin{aligned}
				\text{ }[AB]=\partial Item_2(A,B)+\mathcal R(A)+\mathcal R(B).
			\end{aligned}
		\end{equation*}
	And we want to go one step further and prove that for any suitable $A_1,A_2,B_1,B_2$, there exists a good multi-pants $Item_4$ such that
		\begin{equation*}
		\begin{aligned}
			\text{ }[A_1B_1A_2B_2]=\partial  &Item_4(A_1,A_2,B_1,B_2)+\\
			&\mathcal{R}(A_1)+\mathcal{R}(A_2)+\mathcal{R}(B_1)+\mathcal{R}(B_2),
		\end{aligned}
	\end{equation*}
	which is established in the Four-Part Itemization Theorem.
	
	\subsection{Narrow connections and the replacement map}
	In this subsection, we construct the replacement map. The replacement map is defined on the set of narrow connections. We first introduce the notion of narrow connection.
	\begin{definition}[Narrow connection set]
		Given $\epsilon,R>0$. Let $u$ be a unit vector on $M$. We define
		\[
		LN_{\epsilon,R}(u)=\{A\in Conn_{\epsilon^2,<\infty}(-u,u):2R-L(\epsilon^2)\geq \len(A)\geq L(\epsilon^2)\},
		\]
		where $L(\epsilon^2)$ is the constant in the Random Element Lemma. The set $LN_{\epsilon,R}(u)$ is called the left narrow connection set with respect to $u$. A connection $A\in LN_{\epsilon,R}(u)$ is said to be a left narrow geodesic arc with respect to $u$. Similarly, we define
		\[
		RN_{\epsilon,R}(u)=\{B\in Conn_{\epsilon^2,<\infty}(u,-u):2R-L(\epsilon^2)\geq \len(B)\geq L(\epsilon^2)\},
		\]
		called the right narrow connection set with respect to $u$.
	\end{definition}
	 We are now going to construct the replacement map $\mathcal R$. For any $A\in LN_{\epsilon,R}(u)$, let 
	\[
		\mathcal F(A)=Conn_{\epsilon^2,2R-\len(A)}(u,-u),\]
	and let $\underline B_A$ be a $(\epsilon^2,R)$-effectively random element in $\mathcal F(A)$, whose existence is guaranteed by the length assumption of $A$ and the Random Element Lemma. Similarly, for any $B\in RN_{\epsilon,R}(u)$, let 
	\[
		\mathcal F(B)=Conn_{\epsilon^2,2R-\len(B)}(-u,u),\]
	and let $\underline A_B$ be a $(\epsilon^2,R)$-effectively random element in $\mathcal F(B)$. Note that 
	\begin{equation*}
		\begin{aligned}
			\mathcal F(A)=\mathcal F(\overline A)&\text{, and }&\mathcal F(B)=\mathcal F(\bar B).
		\end{aligned}
	\end{equation*}
	Therefore, we can take $\underline B_A=\underline B_{\overline A}$ and $\underline A_B=\underline A_{\bar B}$.
	\begin{definition}[Replacement map]
		 Given constants $\epsilon,R>0$. For any $A\in LN_{\epsilon,R}(u)$, we define 
		 \[
		\mathcal R(A)=\frac 12\left([A\underline B_A]-[\bar A\underline B_A]\right).
		\]
		Similarly, for any $B\in RN_{\epsilon,R}(u)$ we define 
		\[\mathcal R(B)=\frac 12\left([\underline A_BB]-[\underline A_B\bar B]\right).\] 
	\end{definition}
	 \begin{proposition}\label{replace-inverse}
	 	There exist constants $\epsilon_1,R_1>0$ only depending on $M$ such that for any $R\geq R_1$ and $\epsilon_1\geq \epsilon$ the following holds. For any $A\in LN_{\epsilon,R}(u)$, the multi-curve 
	 	\[
	 	\mathcal R(A)\in\frac{\mathbb Z\Gamma_{\epsilon,R}}{2\lfloor e^{2R}\rfloor},
	 	\]
	 	and 
	 	\[
	 	\mathcal R(A)=-\mathcal R(\bar A).
	 	\]
	 \end{proposition}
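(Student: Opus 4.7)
The plan is to verify both claims by a careful but routine application of the Chain Lemma together with the definitions. The first claim reduces to checking that every closed curve that appears in the formal sum $\mathcal{R}(A)$ belongs to $\Gamma_{\epsilon,R}$ and that the coefficients have denominator dividing $2\lfloor e^{2R}\rfloor$. The second claim is essentially bookkeeping: observing that the left-narrow connection set is invariant under reversal and that $\mathcal{F}(A)=\mathcal{F}(\bar A)$.

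First, I would observe that since $\bar A$ has the same length as $A$ and its initial/terminal vectors are $-t(A)$ and $-i(A)$, which are within $\epsilon^2$ of $-u$ and $u$ respectively, we have $\bar A\in LN_{\epsilon,R}(u)$ and $\mathcal{F}(\bar A)=Conn_{\epsilon^2,2R-\len(A)}(u,-u)=\mathcal{F}(A)$. By the convention fixed right before the definition of the replacement map, we may choose $\underline B_{\bar A}=\underline B_A$, so that
\[
\mathcal R(\bar A)=\tfrac12\bigl([\bar A\,\underline B_A]-[A\,\underline B_A]\bigr)=-\mathcal R(A),
\]
which settles the second claim.

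For the first claim, write $\underline B_A=\sum_i c_i B_i$ with $c_i\in\frac{\mathbb Z^+}{\lfloor e^{2R}\rfloor}$ and each $B_i\in\mathcal F(A)$. Then
\[
\mathcal R(A)=\tfrac12\sum_i c_i\bigl([AB_i]-[\bar A B_i]\bigr),
\]
so the coefficient in front of each closed geodesic lies in $\frac{\mathbb Z}{2\lfloor e^{2R}\rfloor}$. It remains to show that each $[AB_i]$ and $[\bar A B_i]$ belongs to $\Gamma_{\epsilon,R}$. For $[AB_i]$: at the junction between $A$ and $B_i$ the bending angle is at most $\Theta(t(A),u)+\Theta(u,i(B_i))\le 2\epsilon^2$, and at the closing junction between $B_i$ and $A$ the bending is at most $\Theta(t(B_i),-u)+\Theta(-u,i(A))\le 2\epsilon^2$. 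After enlarging $L_0$ in the Connection Lemma if necessary so that $L(\epsilon^2)\geq Q$ (the universal constant from the Chain Lemma), both $A$ and $B_i$ have length at least $Q$, hence the closed-curve version of the Chain Lemma applies with $n=2$ and $\delta=2\epsilon^2$ to give
\[
\len\bigl([AB_i]\bigr)=\len(A)+\len(B_i)+O(\epsilon^2).
\]
Since $B_i\in Conn_{\epsilon^2,2R-\len(A)}(u,-u)$, we have $\len(A)+\len(B_i)\in[2R-\epsilon^2,2R+\epsilon^2]$, so $\bigl|\mathbf{hl}([AB_i])-R\bigr|=O(\epsilon^2)\leq\epsilon$ once $\epsilon\leq\epsilon_1$ is small enough (depending only on the universal constant in the error term of the Chain Lemma). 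The same estimate applies to $[\bar A B_i]$ verbatim, since $\bar A$ has the same length as $A$ and the bending angles at both junctions are again $O(\epsilon^2)$.

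The only step that requires attention is ensuring that the Chain Lemma is indeed applicable, which forces us to take $R_1$ large enough that $L(\epsilon^2)\geq Q$ for all $\epsilon\leq\epsilon_1$, and then to choose $\epsilon_1$ small enough that the $O(\epsilon^2)$ error from the Chain Lemma is absorbed by the $\epsilon$-slack in the definition of $\Gamma_{\epsilon,R}$. Both conditions depend only on universal constants and on $M$ (via the constant $L_0$ from the Connection Lemma), so the constants $\epsilon_1,R_1$ can be chosen as required.
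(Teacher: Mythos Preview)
Your proof is correct and follows essentially the same approach as the paper: the paper's proof invokes the convention $\underline B_A=\underline B_{\bar A}$ for the antisymmetry, observes that the coefficients lie in $(2\lfloor e^{2R}\rfloor)^{-1}\mathbb Z$, and then defers to the Chain Lemma for the statement that $[AB]\in\Gamma_{\epsilon,R}$---you have simply spelled out these steps in more detail. One minor remark: the condition $L(\epsilon^2)\geq Q$ does not depend on $R$, so it is secured by taking $\epsilon_1$ small (or $L_0\geq Q$) rather than by enlarging $R_1$; but this does not affect the argument.
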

	 \begin{proof}
	 	The coefficients of $\mathcal R(A)$ belong to $(2\lfloor e^{2R}\rfloor)^{-1}\mathbb Z$, because the coeffecients of $\underline B_A$ belong to $(\lfloor e^{2R}\rfloor)^{-1}\mathbb Z$. Since $\underline B_A=\underline B_{\bar A}$, we have
	 	\begin{equation*}
	 		\begin{aligned}
	 			\mathcal R(A)=\frac 12\left([A\underline B_{\bar A}]-[\bar A\underline B_{\bar A}]\right)=-\frac 12\left([\bar A\underline B_{\bar A}]-[A\underline B_{\bar A}]\right)=-\mathcal R(\bar A).
	 		\end{aligned}
	 	\end{equation*}
	 	It suffices to prove that for any $B\in \mathcal F(A)$ the curve $[AB]\in\Gamma_{\epsilon,R}$. This is then established by the Chain Lemma. 
	 \end{proof}
	The following proposition is a preliminary to the lemma in the subsequent subsection. It also implies the semirandomness of the replacement map, although this fact is not needed for our main results. We prove the proposition here and omit the verification of semirandomness.
	\begin{proposition}\label{8}
			There exists a constant $C=C(M)$ such that the following holds. For any closed geodesic curve $\gamma\in\Gamma_{1,R}$, there exist at most $CR^2$ pairs of elements\[(A,B)\in Conn_{1,<\infty}(-u,u)\times Conn_{1,<\infty}(u,-u)\]
		 such that $\gamma=[AB]$.
		\end{proposition}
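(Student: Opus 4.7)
The plan is to translate the counting of pairs $(A,B)$ with $[AB]=\gamma$ into a count of lifts of the common basepoint $p$ of $u,-u$ that lie in a bounded tubular neighborhood of a fixed lift of $\gamma$ in the universal cover $\HH$. For any admissible pair, the closed piecewise geodesic $AB$ has two bending points, both at $p$, each with bending angle at most $2$: at the $A\to B$ junction both $t(A)$ and $i(B)$ lie within angle $1$ of $u$, and at the $B\to A$ junction both $t(B)$ and $i(A)$ lie within angle $1$ of $-u$. Since $2<\pi$, a direct hyperbolic-trigonometry argument, extending Lemma \ref{bounded-ineff} to bending angles bounded strictly away from $\pi$, yields $I(AB)\le D$ for some $D=D(M)$. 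By Lemma \ref{bounded distance}, the piecewise geodesic $AB$ lies within distance $D':=D/2+\log 2$ of the closed geodesic $\gamma$.

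I would next pass to $\HH$ and fix a lift $\tilde\gamma$ of $\gamma$ whose stabilizer is generated by a deck element $g$ of translation length $\len(\gamma)\asymp 2R$. After lifting the loop $AB$ so that it stays $D'$-close to $\tilde\gamma$, the pair $(A,B)$ produces a piecewise geodesic $\tilde p_0\to\tilde p_1\to g\tilde p_0$ whose three vertices are lifts of $p$, all within distance $D'$ of $\tilde\gamma$. Two admissible lifts of the same loop $AB$ differ by simultaneous application of a power of $g$ to $(\tilde p_0,\tilde p_1)$, so admissible pairs $(A,B)$ biject with $\langle g\rangle$-orbits of such pairs $(\tilde p_0,\tilde p_1)$.

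I would then count orbits by choosing the representative with $\tilde p_0$ in a fundamental strip for $\langle g\rangle$ along $\tilde\gamma$ of length $\asymp 2R$. Because $M$ is closed, the orbit of $p$ in $\HH$ has bounded density controlled by the injectivity radius of $M$, and a standard hyperbolic volume estimate shows that the number of $p$-lifts within distance $D'$ of $\tilde\gamma$ in a strip of length $2R$ is at most $C_1 R$ for some $C_1=C_1(M)$; this gives $O(R)$ choices for $\tilde p_0$. Once $\tilde p_0$ is fixed, the middle vertex $\tilde p_1$ is a $p$-lift within distance $D'$ of $\tilde\gamma$ whose projection onto $\tilde\gamma$ is confined (by the bounded inefficiency of each of the two sub-arcs $\tilde p_0\tilde p_1$ and $\tilde p_1 g\tilde p_0$) to the interval between the projections of $\tilde p_0$ and $g\tilde p_0$ up to a constant error, producing another $O(R)$ choices. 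The product is the desired bound $CR^2$. The one technical point that demands care is the extension of Lemma \ref{bounded-ineff} to bending angles as large as $2$; but its proof only uses that the angles stay strictly below $\pi$, so the extension is routine and the resulting constant $D$ depends only on $M$.
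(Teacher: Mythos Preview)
Your proposal is correct and follows essentially the same route as the paper: bound the inefficiency of the piecewise geodesic $AB$, invoke Lemma~\ref{bounded distance} to confine the two corner points to a tube of radius $O(1)$ around $\gamma$, pass to the cyclic cover (equivalently, the universal cover modulo $\langle g\rangle$), and count lifts of the basepoint in a tube of volume $\asymp R$ to get $O(R)\times O(R)=O(R^2)$ pairs. The paper works directly in the annular cover $\mathbb T_\gamma$ and appeals to a volume/injectivity-radius packing count rather than your fundamental-strip argument, but the two are equivalent; your write-up is in fact more careful than the paper's in explicitly noting that Lemma~\ref{bounded-ineff} must be extended from bending angles $\le 1.1$ to $\le 2$, a point the paper glosses over.
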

		\begin{proof}
		For any curves $\gamma\in \Gamma_{\epsilon,R}$, let $g_\gamma\in\pi_1(M)$ be a group element homotopy to $\gamma$. Let $\mathbb T_\gamma\rightarrow M$ be a covering with respect to the subgroup $\langle g_\gamma\rangle<\pi_1(M)$ such that any piecewise geodesic curve $AB$ satisfying $[AB]=\gamma$ can be lifted to the cover $\mathbb T_\gamma$. Vice versa, the pair of arcs $(A,B)$ is fully determined by two corner points of the lifted piecewise geodesic in $\mathbb T_\gamma$, denoted by $c_1,c_2$. Let $\gamma\subset\mathbb T_\gamma$ be the lifting of $\gamma$ itself. According to the Lemma \ref{bounded distance}, the two corner points lie in the distance $K$ neighborhood of $\gamma$ for some universal constant $K$. We denote the neighborhood by $\mathbb T_{\gamma,K}$. By applying Fubini Theorem, we obtain
		\[
		Vol(\mathbb T_{\gamma,K})=C(K)\len(\gamma)
		\] 
		Let $(A^\prime,B^\prime)$ be the other pair of geodesic arcs satisfying $[A^\prime B^\prime]=\gamma$. We denote two lifted corner points of $A^\prime B^\prime$ by $c_1^\prime,c_2^\prime$. Assume $c_i$ and $c_i^\prime$ are $I/2$-closed for any $i=1,2$, where $I$ is the injective radius of $M$. Then we have $A^\prime=A$ and $B^\prime=B$. Therefore, we have
		\[
		\#\{(A,B):\gamma=[AB]\}\leq\left(\frac{Vol(\mathbb T_{\gamma,K})}{Vol(\text{radius $I$ ball})}\right)^2=CR^2
		\]
		for some $C=C(M)$. This completes the proof.
		\end{proof}
	\par
	
		\subsection{The Square Lemma}
	This subsection contains a technical lemma, which solves a special case of the Good Pants Homology Theorem, called the Square Lemma.\par 
	 Before stating the Square Lemma, we recall the $\boxtimes$ operator in measure theory. Let $X_1$ be a space equipped with a measure $\sigma_1$ and $X_2$ be a space equipped with a measure $\sigma_2$. By $\sigma_1\boxtimes\sigma_2$, we denote the measure class on $X_1\times X_2$ containing all measures $\sigma$ satisfies that $(\pi_i)_*\sigma\leq \sigma_i$ for any $i=1,2$ where $\pi_i$ is the projection map from $X_1\times X_2$ to $X_i$.
	\begin{lemma}[Square Lemma]\label{square}
	There exist constants $\epsilon_1,R_1,q_1>0$ depending only on $M$ such that for any $R\geq R_1$ and $\epsilon_1 \geq\epsilon\geq e^{-q_1R}$ the following holds. Let $u$ be a unit vector in $M$.  For any 
	\[
	A_1,A_2\in Conn_{\epsilon^2,L_a}(-u,u)
	\]
	\[
	B_1,B_2\in Conn_{\epsilon^2,L_b}(u,-u)
	\]
	satisfying $L_a+L_b=2R$ and $L_a,L_b\geq 2L(\epsilon^2)$, there exists a multi-pants 
	\[
	Sq(A_1,A_2,B_1,B_2)\in\frac{\mathbb Z}{\lfloor e^{2R}\rfloor^4}\PPi_{\epsilon,R}
	\]
	 such that 
	\[
	\partial Sq(A_1,A_2,B_1,B_2)=\sum_{i,j=1,2}(-1)^{i+j}[A_iB_j],
	\]
	and the partial map $Sq$ is $(R\epsilon^{-1})^{24}$-semirandom with respect to the measure classes $\sigma_C^{\boxtimes 2}(A_1,A_2)\times\sigma_C^{\boxtimes 2}(B_1,B_2)$ and $\sigma_{\PPi}$.
	\end{lemma}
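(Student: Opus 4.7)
The plan is to build $Sq$ via a telescoping of two simpler multi-pants. Observe that
\begin{equation*}
\sum_{i,j=1,2}(-1)^{i+j}[A_iB_j] = \big([A_1B_1]-[A_2B_1]\big) - \big([A_1B_2]-[A_2B_2]\big),
\end{equation*}
so it suffices to construct, for each $B\in\{B_1,B_2\}$, a good multi-pants $\Pi(A_1,A_2,B)\in\frac{\mathbb Z}{\lfloor e^{2R}\rfloor^2}\PPi_{\epsilon,R}$ whose boundary has the form $[A_1B]-[A_2B]+\delta(A_1,A_2)$, where $\delta(A_1,A_2)$ is a good multi-curve depending only on $A_1,A_2$. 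Setting $Sq(A_1,A_2,B_1,B_2):=\Pi(A_1,A_2,B_2)-\Pi(A_1,A_2,B_1)$ then yields the prescribed boundary with coefficients in $\frac{\mathbb Z}{\lfloor e^{2R}\rfloor^4}$, since the $\delta$-contributions cancel.

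To construct $\Pi(A_1,A_2,B)$ I would use the $\theta$-graph construction applied to three arcs at the base point $p$ of $u$, two of which are derived from $A_1,A_2$ and one from $\bar B$, modified by inserting $(\epsilon^2,\cdot)$-effectively random arcs from Lemma \ref{randomization}. In its naive form this would give pants with cuffs $[A_1\bar A_2],[A_2B],[A_1B]^{-1}$, directly realizing $\delta=[A_1\bar A_2]$; but this fails because the initial tangents $i(A_1),i(A_2)\approx -u$ are not cyclically separated and because the loop $A_1\bar A_2$ has a cusp at $p$ of angle $\approx\pi$, with geodesic length nowhere near $2R$. The random auxiliary arcs repair both defects simultaneously: they spread the tangents at the $\theta$-graph vertices into generic cyclic position and extend the loops so that every cuff has length close to $2R$. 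The random arcs are chosen to depend only on $(A_1,A_2)$, ensuring that $\delta(A_1,A_2)$ remains $B$-independent. The Chain Lemma and Right Angle Chain Lemma then verify that every resulting cuff belongs to $\Gamma_{\epsilon,R}$ and every pants to $\PPi_{\epsilon,R}$.

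The randomness norm and coefficient denominator are controlled by composing standard estimates. Each application of the Random Element Lemma contributes a factor $O(\epsilon^{-3})$ to the semirandom norm and a factor $\lfloor e^{2R}\rfloor$ to the denominator; the third-connection map and the boundary map contribute factors polynomial in $R$ via Propositions \ref{thirdsr} and \ref{boundarysr}; the forgetful map contributes $O(1)$ by Proposition \ref{forgetful}. A loose accounting over the construction yields the stated semirandom norm $(R\epsilon^{-1})^{24}$ with respect to the product class $\sigma_C^{\boxtimes 2}(A_1,A_2)\times\sigma_C^{\boxtimes 2}(B_1,B_2)$ and denominator $\lfloor e^{2R}\rfloor^4$. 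The main obstacle is the coordinated geometric design: arranging the random auxiliary arcs so that every cuff of every pants is good, that $\delta(A_1,A_2)$ is genuinely uncoupled from $B$ so the telescoping is exact, and that the semirandomness estimates hold in the full product measure class rather than merely its marginals.
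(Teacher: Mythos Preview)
Your telescoping scheme is a genuine organisational alternative to the paper's, but the hard step—the actual geometric construction of $\Pi(A_1,A_2,B)$—is precisely what you leave as a sketch, and the single-$\theta$-graph picture you describe does not obviously close.

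The paper proceeds symmetrically: for a single pair $(A,B)$ it builds $P^*(A,B)$ with
\[
\partial P^*(A,B)=[AB]-S^*_a(A)-S^*_b(B),
\]
where $S^*_a$ depends only on $A$ and $S^*_b$ only on $B$, and then sets $Sq=\sum_{i,j}(-1)^{i+j}P^*(A_i,B_j)$ so that both residual families cancel. The construction of $P^*$ is two-stage. First a third connection $Z_a\overline{Z_b}$, with $Z_a$ attached at the midpoint of $A$ and $Z_b$ at the midpoint of $B$, cuts $[AB]$ into two mixed cuffs $\eta_1,\eta_2$; then two further third connections $W_1,W_2$ split each $\eta_k$ into one $A$-only curve and one $B$-only curve. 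Four random arcs are used, which is exactly what produces the denominator $\lfloor e^{2R}\rfloor^4$ and (at the $\epsilon^2$ level) the factor $\epsilon^{-24}$ in the semirandom bound.

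Your asymmetric version needs $\Pi(A_1,A_2,B)$ to carry \emph{exactly} $[A_1B]$ and $-[A_2B]$ in its boundary while the residual $\delta$ is independent of $B$. The difficulty is that these two requirements pull against each other. If you insert random arcs to separate the nearly coincident tangents $i(A_1)\approx i(A_2)\approx -u$ (so that the $\theta$-graph is nondegenerate), the three cuffs of the resulting pants are no longer $[A_1B]$ and $\overline{[A_2B]}$ but perturbed versions of them; if instead you leave $A_1,A_2,\bar B$ unmodified, the graph is degenerate, as you note. Recovering the exact cuffs would require further pants that undo the modification on two of the three sides, and arranging all of this with auxiliary arcs chosen independently of $B$ leads back to a two-stage splitting of essentially the same shape and complexity as the paper's $P^*$. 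In particular the suggestion that two random arcs suffice for $\Pi$ looks optimistic; with only two $\epsilon^2$-random elements the randomization alone contributes at most $\epsilon^{-12}$, well short of the $(R\epsilon^{-1})^{24}$ budget, and the denominator of $Sq=\Pi(\cdot,B_2)-\Pi(\cdot,B_1)$ would then be $\lfloor e^{2R}\rfloor^2$ rather than $\lfloor e^{2R}\rfloor^4$.
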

	\begin{proof}
		 For any 
		 \begin{equation*}
		 	\begin{aligned}
		 		A\in Conn_{\epsilon^2,L_a}(-u,u)&\text{, and }&B\in Conn_{\epsilon^2,L_b}(u,-u),
		 	\end{aligned}
		 \end{equation*}
		\textbf{Claim:} There exists a multi-pants depending only on $A$ and $B$
		\[P^*=P^*(A,B)\in \frac{\mathbb Z}{\lfloor e^{2R}\rfloor^4}\PPi_{\epsilon,R},\] 
		a multi-curve $S^*_a$ depending only on $A$, and a multi-curve $S^*_b$ depending only on $B$ such that
		 \begin{equation}\label{E1.7}
		 	\partial  P^*=[AB]-S^*_a-S^*_b.
		 \end{equation}Suppose $A_1,A_2$ and $B_1,B_2$ satisfy the condition in the lemma. Let
	\[
	Sq(A_1,A_2,B_1,B_2)=\sum_{i,j}(-1)^{i+j}P^*(A_i,B_j)
	\]
	We apply the equation (\ref{E1.7}) to get that the multi-pants $Sq$ satisfies the needed property.
	\begin{proof}[\textbf{Proof of the claim}]
		 Given $A,B$. We are going to construct $P^*,S_a^*$ and $S_b^*$. Let $A^-,A^+$ be two segments of $A$ minus the midpoint of $A$ such that $A=A^-A^+$. Let $B^-,B^+$ be two segments of $B$ minus the midpoint of $B$ such that $B=B^-B^+$. Let $v_0$ be an auxiliary unit vector. We denote
		\begin{equation*}
		\begin{aligned}
			\mathcal C(A)=& Conn_{\epsilon^2,L_b/2+\log 2}\left(\sqrt{-1}t(A^-),v_0\right)\\
			\mathcal C(B)=& Conn_{\epsilon^2,L_a/2+\log 2}\left(\sqrt{-1}t(B^-),-v_0\right)\\
			\mathcal W_1=& Conn_{\epsilon^2,R+2\log 2}\left(-\sqrt{-1}u,\sqrt{-1}v_0\right)\\
			\mathcal W_2=& Conn_{\epsilon^2,R+2\log 2}\left(\sqrt{-1}u,-\sqrt{-1}v_0\right)
		\end{aligned}
		\end{equation*}
		 For any $Z_a\in\mathcal C(A)$ and $Z_b\in\mathcal C(B)$, the piecewise geodesic arc $Z_a\overline Z_b$ is homotopic to a third connection of the curve $AB$. By $P_1(A,B,Z_a,Z_b)$ we denote the pair of pants constructed by the third connection construction, such that the other two boundaries of $P_1$ are
		\begin{equation*}
			\begin{aligned}
				\eta_1=[B^+A^-Z_a\overline Z_b]&\text{, and }&\eta_2=[Z_b\overline Z_aA^+B^-]
			\end{aligned}
		\end{equation*}
		Meanwhile, for any $W_1\in\mathcal W_1$ and $W_2\in\mathcal W_2$, let $P_2(A,B,Z_a,Z_b,W_1,W_2)$ be the summation of two pairs of pants constructed by adding the third connection $W_1$ to $\eta_1$ and adding the third connection $W_2$ to $\eta_2$ respectively. By adding the third connection $W_1$ to the curve $\eta_1$, we get a pair of pants with two other boundaries
	\begin{equation*}
		\begin{aligned}
			\eta_3=\left[A^-Z_a\overline{W_1}\right]&\text{, and }&\eta_4=\left[W_1\overline{Z_b} B^+\right]
		\end{aligned}
	\end{equation*}
	By adding the third connection $W_2$ to the curve $\eta_2$, we get a pair of pants with two other boundaries
	\begin{equation*}
		\begin{aligned}
			\eta_5=\left[\overline{Z_a} A^+W_2\right]&\text{, and }&\eta_6=\left[\overline{W_2}B^-Z_b\right]
		\end{aligned}
	\end{equation*}
	Let $S_a=\eta_3+\eta_5$, $S_b=\eta_4+\eta_6$, and let $P=P_1+P_2$. Therefore, for any $Z_a,Z_b,W_1,W_2$, we obtain $
	\partial  P=[AB]-S_a-S_b$. The lengths of geodesic curves $\eta_i$ can be estimated by the Right Angle Chain Lemma. There exists a universal constant $\epsilon_0$ such that if $\epsilon_0\geq \epsilon$ then $\eta_i\in\Gamma_{\epsilon,R}$ for any $i=1,\dots,6$. This implies that $P\in\mathbb Z\PPi_{\epsilon,R}$.
	\par Since the assumptions about $L_a,L_b$ and $\epsilon,R$,  the sets $\mathcal C(A),\mathcal C(B),\mathcal W_1,$ $\mathcal W_2$ satisfy the condition from the Random Element Lemma. Let $\underline Z_a,\underline Z_b,\underline W_1,\underline W_2$ be a $(\epsilon^2,R)$-effectively random element in the corresponding connection sets, respectively. We define 
	\[
	P^*(A,B)=P(A,B,\underline Z_a,\underline Z_b,\underline W_1,\underline W_2)\in\frac{\mathbb Z}{\lfloor e^{2R}\rfloor^4}\PPi_{C\epsilon^2,R},
	\]
	$S_a^*(A)=S_a(A,\underline Z_a,\underline W_1,\underline W_2)$, and $S_b^*(B)=S_b(B,\underline Z_b,\underline W_1,\underline W_2)$. 
	\end{proof}
	\textbf{Estimate the semirandom norm of $Sq$.} 
	Let $\mathcal C(A),\mathcal C(B)$ and $\mathcal W_1,\mathcal W_2$ be the measure spaces equipped with the measure $\sigma_C$. By definition, for any $\Pi\in\PPi_{1,R}$ we have the following estimates
	\begin{equation*}
		\begin{aligned}
			|P_1|_*(\sigma_C^{\times 4})(\Pi)= \sum_{A,B,Z_a,Z_b}e^{-\len(A)-\len(B)-\len(Z_a)-\len(Z_b)}\leq C^\prime\sum_{A,B,Z_a,Z_b} e^{-3R}
		\end{aligned}
	\end{equation*}
	where the summation is taken over all elements $(A,B,Z_a,Z_b)$ satisfying 
	\[P_1(A,B,Z_a,Z_b)=\Pi,\] and $C^\prime$ is some universal constant.  According to Proposition \ref{8}, there exists a constant $C^{\prime\prime}=C^{\prime\prime}(M)$ such that there are at most $C^{\prime\prime}R^2$ many pairs of elements $(A,B)$ such that the curve $[AB]$ is a boundary of the pair of pants $\Pi$. This further implies that there are at most $C^{\prime\prime}R^2$ four-tuples of elements $(A,B,Z_a,Z_b)$ satisfying $P_1(A,B,Z_a,Z_b)=\Pi$, since the third connection map is injective. Therefore, letting $C_1=C^\prime C^{\prime\prime}$, the map $P_1$ is $C_1R^2$-semirandom. 
	\par The pants $P_2$ is constructed by first taking the boundary of $P_1$, then adding the third connections. Therefore, the semirandom norm of $P_2$ is bounded by the semirandom norm of the map $P_1$, the boundary map, and the third connection map. According to Proposition \ref{thirdsr} and Proposition \ref{boundarysr}, we get that the map $P$ is $C_2R^3$-semirandom for some constant $C_2=C_2(M)$. According to the Random Element Lemma, the semirandom norms of the random elements is bounded by $\epsilon^{-6}$ such that the map $P^*$ is a $C_3R^3\epsilon^{-24}$-semirandom map for some cosntant $C_3=C_3(M)$. 
	\par 
	The map $Sq$ is the composition of the map
	 \[(A_1,A_2,B_1,B_2)\mapsto\sum_{ij}(-1)^{i+j}(A_i,B_j)\]
	  and the map $P^*$. Since the former map is $4$-semirandom with respect to the measure classes $\sigma_C^{\boxtimes 2}\times\sigma_C^{\boxtimes 2}$ and $\sigma_C\times\sigma_C$, we obtain that the map $Sq$ is $4C_3R^3\epsilon^{-24}$-semirandom. If $R\geq 4C_2$, then the map $Sq$ is $R^4\epsilon^{-24}$-semirandom. 
	\end{proof}
		\subsection{The Two-Part Itemization Theorem}

		\begin{theorem}[Two-Part Itemization Theorem]
	There exist constants $\epsilon_1,R_1,q_1>0$ depending only on $M$ such that for any $R\geq R_1$ and $\epsilon_1\geq\epsilon\geq e^{-q_1R}$ the following holds. Suppose $A\in LN_{\epsilon,R}(u)$ and $B\in RN_{\epsilon,R}(u)$ such that  $[AB]\in\Gamma_{\epsilon,R}$. Then there exists a multi-pants
		\[
		Item_2(A,B)\in\frac{\mathbb Z\PPi_{\epsilon,R}}{2\lfloor\exp(2R)\rfloor^5}
		\]
		such that the following holds
		\begin{equation*}
			\begin{aligned}
				\text{ }[AB]=\partial Item_2(A,B)+\mathcal R(A)+\mathcal R(B),
			\end{aligned}
		\end{equation*}
		and the partial map $Item_2$ is $(R\epsilon^{-1})^{25}$-semirandom with respect to the measure classes $\sigma_C^2$ and $\sigma_{\PPi}$ for some $m=m(M)>0$.
	\end{theorem}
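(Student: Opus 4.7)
The plan is to express $[AB]-\mathcal R(A)-\mathcal R(B)$ as the boundary of a half-integer combination of two applications of the Square Lemma. First, I set up notation: let $\underline B_A\in\mathcal F(A)$ and $\underline A_B\in\mathcal F(B)$ be the $(\epsilon^2,R)$-effectively random elements used to define $\mathcal R(A)$ and $\mathcal R(B)$. Since reversing an oriented arc flips both endpoint vectors, $\bar A\in Conn_{\epsilon^2,<\infty}(-u,u)$ with $\len(\bar A)=\len(A)$, and similarly $\bar B\in Conn_{\epsilon^2,<\infty}(u,-u)$ with $\len(\bar B)=\len(B)$. Goodness of $[AB]$ together with the narrow-angle bounds at the bases of $\pm u$ pins $\len(A)+\len(B)$ down closely enough to $2R$ to invoke the Square Lemma with $L_a:=\len(A)$ and $L_b:=2R-L_a$ for both applications.

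Next, I would form the multi-pants
\[
Sq_1:=Sq(A,\bar A,\underline B_A,B),\qquad Sq_2:=Sq(\bar A,\underline A_B,B,\bar B),
\]
and compute their boundaries via $\partial Sq(A_1,A_2,B_1,B_2)=\sum_{i,j}(-1)^{i+j}[A_iB_j]$:
\begin{equation*}
\partial Sq_1=[A\underline B_A]-[AB]-[\bar A\underline B_A]+[\bar A B]=2\mathcal R(A)+[\bar A B]-[AB],
\end{equation*}
\begin{equation*}
\partial Sq_2=[\bar A B]-[\bar A\bar B]-[\underline A_B B]+[\underline A_B\bar B]=[\bar A B]+[AB]-2\mathcal R(B),
\end{equation*}
where in the second line I use that $\bar A\bar B$ is freely homotopic to $\bar B\bar A=\overline{AB}$, so $[\bar A\bar B]=-[AB]$ under the identification $\bar\gamma=-\gamma$ in $\mathbb Z\Gamma$. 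Setting $Item_2(A,B):=\tfrac 12(Sq_2-Sq_1)$ then yields $\partial Item_2(A,B)=[AB]-\mathcal R(A)-\mathcal R(B)$, which is the required identity.

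For the coefficient bound, writing $Sq_i=\sum_X\underline B_A(X)\,Sq(\cdots,X,\cdots)$ (and analogously for $Sq_2$) combines the denominator $\lfloor e^{2R}\rfloor$ from the random element with the $\lfloor e^{2R}\rfloor^4$ denominator from the Square Lemma, yielding $\lfloor e^{2R}\rfloor^5$; folding in the factor $\tfrac 12$ produces the stated $2\lfloor e^{2R}\rfloor^5$. For the semirandom bound, $Sq$ is $(R\epsilon^{-1})^{24}$-semirandom, the reversals $A\mapsto\bar A$ and $B\mapsto\bar B$ are $\sigma_C$-preserving involutions and hence costless, and randomizing one slot against $\underline B_A$ or $\underline A_B$ (each of semirandom norm $\lesssim\epsilon^{-6}$) contributes an extra $\epsilon^{-6}$ factor via the randomization method. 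Combining gives $\lVert Item_2\rVert_{s.r.}\lesssim R^{24}\epsilon^{-30}$, which absorbs into the claimed $(R\epsilon^{-1})^m$ bound for a suitable universal $m$.

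The main technical obstacle is matching the exact length tolerance required by the Square Lemma: the assumption $[AB]\in\Gamma_{\epsilon,R}$ combined with the narrow-angle junctions yields $\len(A)+\len(B)=2R+O(\epsilon)$, which is coarser than the $\epsilon^2$ tolerance built into $Conn_{\epsilon^2,L}$. Closing this gap requires the sharper observation that two long geodesic arcs meeting at angle $O(\epsilon^2)$ have inefficiency $O(\epsilon^4)$, so that after choosing $L_a:=\len(A)$ and $L_b:=2R-\len(A)$, the pair $(\underline B_A,B)$ genuinely lies in $Conn_{\epsilon^2,L_b}(u,-u)^2$; this is the step where $\epsilon_1$ must be taken sufficiently small for the construction to go through. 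Beyond this, tracking the measure classes through the randomization is routine, since reversal preserves $\sigma_C$ and the pushforward of $\sigma_C^{\boxtimes 2}$ along $(A,B)\mapsto(A,\bar A,X,B)$ (with $X$ carrying $\sigma_C$ on the fiber) lies in $\sigma_C^{\boxtimes 4}$ up to a factor absorbed into the final bound.
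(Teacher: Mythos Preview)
Your proposal is correct and essentially identical to the paper's proof: the paper also takes a half-sum of two applications of the Square Lemma, namely $Item_2(A,B)=\tfrac12 Sq(A,\bar A,B,\underline B_A)+\tfrac12 Sq(\bar A,\underline A_B,B,\bar B)$, which coincides with your $\tfrac12(Sq_2-Sq_1)$ once one notes that swapping the two $B$-slots negates $Sq$. Your treatment of the denominator and semirandom bounds matches the paper's reasoning, and the length-tolerance issue you flag (whether $B$ lands in $Conn_{\epsilon^2,2R-\len(A)}$) is a point the paper also passes over without comment.
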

	\begin{proof}
		Since $\len(A),\len(B)\geq L(\epsilon^2)$ and $\len(A)+\len(B)\approx 2R$, the replacement map is well-defined on the elements $A$ and $B$. By applying the Square Lemma, we obtain the following two equations
		\begin{equation*}
			\begin{aligned}
				\partial Sq(A,\bar A,B,\underline B_A)&=\left([AB]-[\bar AB]\right)-\left([A\underline B_A]-[\bar A\underline B_A]\right)\\
				\partial Sq(\bar A,\underline A_B,B,\bar B)&=\left([\bar AB]-[\bar A\bar B]\right)-\left([\underline A_BB]-[\underline A_B\bar B]\right)
			\end{aligned}
		\end{equation*}
		Let
		\[Item_2(A,B)=\frac 12Sq(A,\bar A,B,\underline B_A)+\frac 12Sq(\bar A,\underline A_B,B,\bar B).\] 
		Combining the above two equations, we obtain
		\[
		\partial Item_2(A,B)=[AB]-\mathcal R(A)-\mathcal R(B).
		\]
		The semirandomness of the partial map $Item_2$ follows from the semirandomness of $Sq$.
	\end{proof}

		\subsection{The Exchange Lemma}
		In this subsection, we prove the Exchange Lemma, which is a preliminary lemma to the Four-Part Itemization Theorem. The Exchange Lemma is proved by iteratively using the Square Lemma.
		\begin{lemma}[Exchange Lemma]
			There exist constants $\epsilon_1,R_1,q_1>0$ depending only on $M$ such that for any $R\geq R_1$ and $\epsilon_1\geq\epsilon\geq e^{-q_1R}$ the following holds. Suppose $A_1,A_2\in LN_{\epsilon,R}(u)$ and $B_1,B_2\in RN_{\epsilon,R}(u)$ such that $[A_1B_1A_2B_2]\in\Gamma_{\epsilon,R}$. Then there exists a multi-pants
			\[
			Exch(A_1,A_2,B_1,B_2)\in \frac{\mathbb Z\PPi_{\epsilon,R}}{2\lfloor e^{2R}\rfloor^4}
			\]
			such that 
			\[
			\partial  Exch=[A_1B_1A_2B_2]-[A_1B_2A_2B_1],
			\]
			and the partial map $Exch$ is $(R\epsilon^{-1})^{26}$-semirandom with respect to the measure classes $\sigma_C^{\times 4}$ and $\sigma_{\PPi}$.
		\end{lemma}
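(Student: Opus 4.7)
My plan is to realize the desired difference as the boundary of a single Square Lemma application, with the two spurious cross-terms cancelling via cyclic invariance of oriented closed curves. The length-matching hypotheses of the Square Lemma will then be enforced by a bounded number of auxiliary Square Lemma applications with random connections produced by the Random Element Lemma.

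\textbf{Main identity.} Form the geodesic arcs
\[
Y_1=[\cdot B_1A_2B_2\cdot],\qquad Y_2=[\cdot B_2A_2B_1\cdot],
\]
both in $Conn(u,-u)$, obtained by straightening piecewise geodesics of identical total length $\len(B_1)+\len(A_2)+\len(B_2)$. Applying the Square Lemma to $(A_1,A_2,Y_1,Y_2)$ gives
\[
\partial Sq(A_1,A_2,Y_1,Y_2)=[A_1B_1A_2B_2]-[A_1B_2A_2B_1]-[A_2B_1A_2B_2]+[A_2B_2A_2B_1].
\]
The last two terms coincide in $\mathbb Z\Gamma$: cyclically shifting the word $A_2B_1A_2B_2$ by two letters yields $A_2B_2A_2B_1$, and cyclic shifts preserve the oriented free homotopy class of a closed curve. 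Hence they cancel, and the boundary equals $[A_1B_1A_2B_2]-[A_1B_2A_2B_1]$ as required.

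\textbf{Length alignment.} The Square Lemma imposes $\len(A_1)\approx\len(A_2)=:L_a$, $\len(Y_1)\approx\len(Y_2)=:L_b$, and $L_a+L_b=2R$. The pair $(Y_1,Y_2)$ satisfies its constraint automatically by symmetric inefficiency estimates from the Chain Lemma, and the condition $L_a+L_b=2R$ follows from $[A_1B_1A_2B_2]\in\Gamma_{\epsilon,R}$ together with length accounting. When $\len(A_1)\neq\len(A_2)$, I iterate: introduce an $(\epsilon^2,R)$-effectively random auxiliary $\underline Z\in Conn_{\epsilon^2,L^*}(-u,u)$ at a common intermediate length, apply the Square Lemma twice more to exchange each $A_i$ against $\underline Z$ through a matched length, and telescope. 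The cyclic cancellation is preserved since it depends only on the structural form of the quadruple, not the specific arcs. The denominator stays bounded by $2\lfloor e^{2R}\rfloor^4$ because only one extra Random Element Lemma call is required.

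\textbf{Main obstacle and semirandom bound.} The principal difficulty is arranging the length-alignment iteration so that the cyclic cancellation of cross-terms is not spoiled, which constrains the choice of auxiliary connection and its orientation; a wrong choice would introduce a four-letter word that is not a cyclic shift of its partner. For the semirandom norm, the base Square Lemma is $(R\epsilon^{-1})^{24}$-semirandom. The deterministic rearrangement $(A_1,A_2,B_1,B_2)\mapsto(A_1,A_2,Y_1,Y_2)$ contributes at most a factor $R\epsilon^{-1}$ via the bounded fibre count of Proposition \ref{8}, and the length-alignment auxiliary adds another $R\epsilon^{-1}$ via the Random Element Lemma, yielding the claimed bound $(R\epsilon^{-1})^{26}$.
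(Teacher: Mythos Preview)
Your main identity is correct and elegant: with $Y_1=[\cdot B_1A_2B_2\cdot]$ and $Y_2=[\cdot B_2A_2B_1\cdot]$, the cross terms $[A_2B_1A_2B_2]$ and $[A_2B_2A_2B_1]$ are indeed cyclic shifts of the same word, hence equal as oriented closed geodesics, and cancel in $\partial Sq(A_1,A_2,Y_1,Y_2)$. The gap is the length-alignment step, and it is fatal in the form you propose.

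The Square Lemma requires both $A$-arguments to lie in $Conn_{\epsilon^2,L_a}(-u,u)$ for the \emph{same} $L_a$, i.e.\ $|\len(A_1)-\len(A_2)|\le 2\epsilon^2$. Under the hypotheses of the Exchange Lemma the lengths of $A_1,A_2$ may differ by order $R$, so a single auxiliary $\underline Z$ and ``twice more'' applications cannot bridge the gap: each Square step tolerates only a $2\epsilon^2$ discrepancy, so any such scheme needs $\asymp R\epsilon^{-2}$ steps, not $O(1)$. Worse, your cyclic cancellation depends on the second $A$-slot of $Sq$ being the \emph{same} arc $A_2$ that sits inside $Y_1,Y_2$. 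As soon as you put any $Z\neq A_2$ in that slot, the cross terms become $[ZB_1A_2B_2]-[ZB_2A_2B_1]$, which are not cyclic shifts of one another; they do not cancel, and you are left with an expression of exactly the original form with $Z$ in place of $A_1$. There is therefore no telescope to build in the $A$-variable, and the claimed denominator and semirandom bounds are unsupported.

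The paper avoids this by never aligning $A_1$ with $A_2$. It keeps $A_1,A_2$ fixed, introduces the bracket $\{C,D\}=[A_1CA_2D]-[A_1DA_2C]$, and uses two Square applications with the symmetric first pair $(A_1CA_2,\,A_2CA_1)$ (which automatically share length) to show $\{C_1,D_1\}-\{C_2,D_2\}$ is a boundary whenever $(C_1,D_1)$ and $(C_2,D_2)$ are length-adjacent. One then interpolates $(B_1,B_2)$ to $(B_2,B_1)$ through $n=\lfloor R\epsilon^{-2}\rfloor$ intermediate pairs of right narrow connections and telescopes; this chain of length $\asymp R\epsilon^{-2}$ is exactly what produces the extra factor over the Square Lemma's $(R\epsilon^{-1})^{24}$ to reach $(R\epsilon^{-1})^{26}$.
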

		\begin{proof}
	 	 We first fix some notations. Given $A_1,A_2\in LN_{\epsilon,R}(u)$. For any two geodesic arcs $C,D\in  RN_{\epsilon,R}(u)$,  we denote 
		\[\{C,D\}=[A_1CA_2D]-[A_1DA_2C].\]
		Two pairs of right narrow connections $(C_1,D_1)$ and $(C_2,D_2)$ are called adjacent if for any $i\in\{1,2\}$ and $j\in\{1,2\}$ the following holds
		\[
		[A_1C_iA_2D_j],[A_1D_jA_2C_i]\text{ }\in \Gamma_{\epsilon,R}.
		\]
		Suppose $(C_1,D_1),(C_2,D_2)$ are adjacent. Then, by applying the Square Lemma, we get the following two equations
		\begin{equation*}
			\begin{aligned}
				\partial Sq(A_1C_1A_2,A_2C_1A_1,D_1,D_2)&=\{C_1,D_1\}-\{C_1,D_2\}\\
				\partial Sq(A_2D_2A_1,A_1D_2A_2,C_1,C_2)&=\{C_1,D_2\}-\{C_2,D_2\}
			\end{aligned}
		\end{equation*}
		Letting
		\begin{equation*}
			\begin{aligned}
				P(A_1,A_2,C_1,C_2,D_1,D_2)&=Sq(A_1C_1A_2,A_2C_1A_1,D_1,D_2)\\
				&+Sq(A_2D_2A_1,A_1D_2A_2,C_1,C_2),
			\end{aligned}
		\end{equation*}
		we then obtain 
		\begin{equation}\label{E1.8}
			\partial  P(A_1,A_2,C_1,C_2,D_1,D_2)=\{C_1,D_1\}-\{C_2,D_2\}.
		\end{equation}
		
		\textbf{The construction of $Exch$.} Suppose $A_1,A_2,B_1,B_2$ are elements that satisfy the conditions from the lemma. Denote $L_1=\len(B_1),L_2=\len(B_2)$. Let $n=\lfloor R\epsilon^{-2}\rfloor $ and $\delta=(L_1-L_2)/n$. For any $k=1,\dots,n-1$, we take a pair of geodesic arcs 
		\[
		(C_k,D_k)\in Conn_{\epsilon^2,L_1+k\delta}(u,-u)\times Conn_{\epsilon^2,L_2-k\delta}(u,-u),
		\] 
		and let $(C_0,D_0)=(B_1,B_2),(C_n,D_n)=(B_2,B_1)$. Therefore, for any $k=0,\cdots,n-1$, two pairs of geodesic arcs $(C_k,D_k)$ and $(C_{k+1},D_{k+1})$ are adjacent. Let
		\[
		Exch(A_1,A_2,B_1,B_2)=\frac 12\sum_{k=0}^{n-1}P(A_1,A_2,C_k,C_{k+1},D_k,D_{k+1}).
		\]
		By applying Equation (\ref{E1.8}), we obtain
		\begin{equation*}
			\begin{aligned}
				\partial Exch(A_1,A_2,B_1,B_2)&=\frac 12(\{C_0,D_0\}-\{C_n,D_n\})=\{B_1,B_2\}\\
				&=[A_1B_1A_2B_2]-[A_1B_2A_2B_1].
			\end{aligned}
		\end{equation*}
		Therefore, the multi-pants $Exch$ satisfies the needed property.\par 
		\textbf{Estimating the semirandom norm of $Exch$.}  
		According to the Square Lemma, the map $P$ is $(R\epsilon^{-1})^{24}$-semirandom with respect to the measure classes $\sigma_C^{\times 2}(A)\times \sigma_C^{\boxtimes 2}(C)\times \sigma_C^{\boxtimes 2}(D)$ and $\sigma_{\PPi}$. Meanwhile the map 
		\[
		(B_1,B_2)\mapsto (C_k,C_{k+1},D_k,D_{k+1})
		\]
		is $4$-semirandom with respect to the measure classes $\sigma_C^{\times 2}(B)$ and $\sigma_C^{\boxtimes 2}(C)\times \sigma_C^{\boxtimes 2}(D)$. Therefore, we have 
		\[
		||Exch||_{s.r.}\leq \sum_{k=0}^{n-1}||P(A_1,A_2,C_k,C_{k+1},D_k,D_{k+1})||_{s.r.}\leq 4n||P||_{s.r.}
		\]
		Since $n\approx R\epsilon^{-2}$, the map $Exch$ is $(R\epsilon^{-1})^{26}$-semirandom.
		\end{proof}

		 \subsection{The Four-Part Itemization Theorem}
		 \begin{theorem}[Four-Part Itemization Theorem]
	There exist constants $\epsilon_1,R_1,q_1>0$ depending only on $M$ such that for any $R\geq R_1$ and $\epsilon_1\geq\epsilon\geq e^{-q_1R}$ the following holds.  Suppose $A_1,A_2\in LN_{\epsilon,R}(u)$ and $B_1,B_2\in RN_{\epsilon,R}(u)$ such that $[A_1B_1A_2B_2]\in\Gamma_{\epsilon,R}$. Then there exists a multi-pants
		 \[Item_4(A_1,A_2,B_1,B_2)\in \frac{\mathbb Z\PPi_{\epsilon,R}}{2\lfloor\exp(2R)\rfloor^5}\] 
		  such that the following holds
	\begin{equation*}
		\begin{aligned}
			\text{ }[A_1B_1A_2B_2]=\partial  &Item_4(A_1,A_2,B_1,B_2)+\\
			&\mathcal{R}(A_1)+\mathcal{R}(A_2)+\mathcal{R}(B_1)+\mathcal{R}(B_2),
		\end{aligned}
	\end{equation*}
		and the partial map $Item_4$ is $(R\epsilon^{-1})^{30}$-semirandom with respect to the measure classes $\sigma_C^{\times 4}$ and $\sigma_{\PPi}$.
		
	\end{theorem}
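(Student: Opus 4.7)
The plan is to bootstrap the Four-Part Itemization from the Two-Part Itemization Theorem by viewing $\gamma = [A_1B_1A_2B_2]$ as a $2$-part good curve. Let $\tilde B_1 = [\cdot B_1 A_2 B_2 \cdot]$ denote the geodesic arc from the foot of $u$ to the foot of $-u$ homotopic to the piecewise concatenation $B_1 A_2 B_2$. Using the Chain Lemma together with the narrow-connection constraints $A_i \in LN_{\epsilon, R}(u)$, $B_j \in RN_{\epsilon, R}(u)$ and the hypothesis $\gamma \in \Gamma_{\epsilon, R}$, one verifies that $\tilde B_1 \in RN_{\epsilon, R}(u)$ and $[A_1 \tilde B_1] = \gamma$. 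Applying the Two-Part Itemization Theorem then gives
\[
\gamma = \partial Item_2(A_1, \tilde B_1) + \mathcal R(A_1) + \mathcal R(\tilde B_1),
\]
which extracts the desired $\mathcal R(A_1)$. The task is therefore reduced to constructing a good multi-pants $Q$ with
\[
\mathcal R(\tilde B_1) = \partial Q + \mathcal R(A_2) + \mathcal R(B_1) + \mathcal R(B_2),
\]
so that $Item_4 = Item_2(A_1, \tilde B_1) + Q$ satisfies the required identity.

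To construct $Q$, unfold $2\mathcal R(\tilde B_1) = [\underline A_{\tilde B_1} B_1 A_2 B_2] - [\underline A_{\tilde B_1} \bar B_2 \bar A_2 \bar B_1]$, and use a sequence of Square Lemma applications to rewrite each of these good $4$-part curves as a combination of the desired good $2$-part curves $[A_2 \underline B_{A_2}]$, $[\underline A_{B_1} B_1]$, $[\underline A_{B_2} B_2]$ (and their reversed-arc counterparts), modulo good-pants boundaries. A typical Square Lemma application takes geodesized concatenations as its $2$-part inputs, e.g.\ $Sq\big([\cdot \underline A_{\tilde B_1} B_1 A_2 \cdot], [\cdot \underline A_{\tilde B_1} B_1 \bar A_2 \cdot], B_2, \underline B_{A_2}\big)$, to exchange $A_2$ with $\bar A_2$ at the cost of a good-pants boundary; summing such contributions produces the $\mathcal R(A_2)$ term. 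Parallel applications extract $\mathcal R(B_1)$ and $\mathcal R(B_2)$.

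Crucially, all Square Lemma applications must reuse the effectively random elements $\underline A_{\tilde B_1}, \underline B_{A_1}, \underline B_{A_2}, \underline A_{B_1}, \underline A_{B_2}$ that already appear in the various $\mathcal R$ terms, rather than introducing fresh random elements. This keeps the denominator of $Item_4$ at $2\lfloor e^{2R}\rfloor^5$, matching Two-Part. At each Square Lemma application, the Chain Lemma and Right Angle Chain Lemma must be invoked to verify that the geodesized concatenations land in $LN_{\epsilon, R}(u)$ or $RN_{\epsilon, R}(u)$ with the right lengths, so that the resulting pair of pants is good.

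For the semirandom bound, the composition property of semirandom norms gives: the single Two-Part application contributes $(R\epsilon^{-1})^{25}$, and the collection of Square Lemma applications—together with the regrouping maps sending $(A_1,A_2,B_1,B_2)$ to the geodesized concatenations appearing as inputs—contributes an additional factor bounded by $(R\epsilon^{-1})^{5}$, yielding the claimed bound $(R\epsilon^{-1})^{30}$. The main obstacle is the explicit construction of the sequence of Square Lemma applications producing $Q$: one must verify that the intermediate good curves telescope cleanly to leave exactly $\mathcal R(A_2) + \mathcal R(B_1) + \mathcal R(B_2)$, and that goodness is preserved at each intermediate stage. This combinatorial cancellation, analogous to but more intricate than the telescoping in the Exchange Lemma, is the technical heart of the argument.
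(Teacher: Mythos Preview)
Your route diverges from the paper's, and the gap is precisely the part you flag as ``the technical heart'' without supplying it. The paper does not pass through the Two-Part Itemization at all. Instead it writes $2\,Item_4$ directly as a sum of four Square Lemma terms plus one Exchange Lemma term: each $Sq$ application flips one of $A_1,B_1,A_2,B_2$ to its bar (at the cost of the corresponding $2\mathcal R(\cdot)$), and after all four flips one is left with $[A_1B_1A_2B_2]-[\bar A_1\bar B_1\bar A_2\bar B_2]$. The crucial point is that $[\bar A_1\bar B_1\bar A_2\bar B_2]=-[A_1B_2A_2B_1]$, i.e.\ the fully-flipped curve is not $-[A_1B_1A_2B_2]$ but rather its $B_1\leftrightarrow B_2$ swap. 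The Exchange Lemma is exactly what trades $[A_1B_2A_2B_1]$ for $[A_1B_1A_2B_2]$ modulo a good-pants boundary, completing the telescoping.

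Your approach inherits the same obstruction. After applying $Item_2$ to $(A_1,\tilde B_1)$ you must decompose $2\mathcal R(\tilde B_1)=[\underline A_{\tilde B_1}B_1A_2B_2]-[\underline A_{\tilde B_1}\bar B_2\bar A_2\bar B_1]$. If you now run three $Sq$ applications to flip $B_1,A_2,B_2$ in the first curve, you land on $[\underline A_{\tilde B_1}\bar B_1\bar A_2\bar B_2]=-[\overline{\underline A_{\tilde B_1}}\,B_2A_2B_1]$, which again has $B_1,B_2$ swapped relative to what you need. Nothing in your outline explains how this swap is undone; the sentence ``analogous to but more intricate than the telescoping in the Exchange Lemma'' is not a substitute for actually invoking (or reproving) it. Moreover, going through $Item_2$ first buys you nothing: it introduces an extra random element $\underline A_{\tilde B_1}$ into the bookkeeping and still leaves a four-part problem behind. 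The direct ``four $Sq$'s plus one $Exch$'' construction in the paper is both shorter and makes the $(R\epsilon^{-1})^{30}$ bound and the denominator $2\lfloor e^{2R}\rfloor^5$ transparent.
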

	\begin{proof}
		The construction of the multi-pants $Item$ proceeds as follows.
	 \begin{equation*}
			\begin{aligned}
				2Item(A_1,A_2,B_1,B_2)&=Sq\left(A_1,\bar  {A_1},\text{ }B_1A_2B_2,\text{ }\underline B_{A_1}\right)\\
				&+Sq\left(A_2 B_2\bar  {A_1},\underline A_{B_1},B_1,\bar  B_1\right)\\
				&+Sq\left(A_2,\bar  {A_2},\text{ }B_2\bar  {A_1}\text{ }\bar  B_1,\text{ }\underline B_{A_2}\right)\\
				&+Sq\left(\bar {A_1}\text{ }\bar {B}_1\bar {A_2},\text{ }\underline A_{B_2},B_2,\bar  B_2\right)\\
				&-Exch(A_1,A_2,B_1,B_2)
			\end{aligned}
		\end{equation*}
	 By applying the Square Lemma, the following four equations hold
		\begin{equation*}
			\begin{aligned}
				\partial Sq\left(A_1,\bar  {A_1},\text{ }B_1A_2B_2,\text{ }\underline B_{A_1}\right)&=[A_1B_1A_2B_2]-[\bar {A_1}B_1A_2B_2]-2\mathcal{R}(A_1)\\
				\partial Sq\left(A_2 B_2\bar  {A_1},\underline A_{B_1},B_1,\bar  B_1\right)&=[\bar {A_1}B_1A_2B_2]-[\bar {A_1}\bar {B_1}A_2B_2]-2\mathcal{R}(B_1)\\
				\partial Sq\left(A_2,\bar  {A_2},\text{ }B_2\bar  {A_1}\text{ }\bar  B_1,\text{ }\underline B_{A_2}\right)&=[\bar {A_1}\bar {B_1}A_2B_2]-[\bar {A_1}\bar {B_1}\text{ }\bar {A_2}B_2]-2\mathcal{R}(A_2)\\
				\partial Sq\left(\bar {A_1}\text{ }\bar {B}_1\bar {A_2},\text{ }\underline A_{B_2},B_2,\bar  B_2\right)&=[\bar {A_1}\bar {B_1}\text{ }\bar {A_2}B_2]-[\bar {A_1}\bar {B_1}\text{ }\bar {A_2}\bar {B_2}]-2\mathcal{R}(B_2)
			\end{aligned}
		\end{equation*}
		By applying the Exchange Lemma, the following equation holds
		\[
			\partial  Exch=[A_1B_1A_2B_2]-[A_1B_2A_2B_1].
			\]
		Since 
		\[\left[\bar {A_1}\bar {B_1}\text{ }\bar {A_2}\bar {B_2}\right]=-\left[A_1B_2A_2B_1\right],\]
		by taking the summation of the above six equations, we obtain that the multi-pants $Item$ satisfies 
		\begin{equation*}
		\begin{aligned}
			\partial  &Item(A_1,A_2,B_1,B_2)=\\
			&[A_1B_1A_2B_2]-\left(\mathcal{R}(A_1)+\mathcal{R}(A_2)+\mathcal{R}(B_1)+\mathcal{R}(B_2)\right)	
		\end{aligned}
	\end{equation*} 
	 The semirandom norm of the map $Item$ is bounded by the semirandom norms of the map $Exch$ and the map $Sq$. This implies that the map $Item$ is $(R\epsilon^{-1})^{30}$-semiranodm.
	\end{proof}

\section{From the group homology to good pants}
This section proceeds as follows. In Subsections 6.1 and 6.2, we introduce the notion of narrow triangles and prove the Narrow Triangle Replacement Lemma. Technically, these two subsections are a continuation of the previous section. Then, in Subsections 6.3 and 6.4, we introduce the notion of bounded group homology and prove the Group-to-Pants Theorem.
\par  In brief, let $*$ be a base point, and let $G=\pi_1(M,*)$ be the fundamental group of the surface $M$. The boundary map in the group homology is defined by 
		\begin{equation*}
			\begin{aligned}
				\partial :G\times G\rightarrow\mathbb ZG&&(X,Y)\mapsto X+Y-XY.
			\end{aligned}
		\end{equation*}
For any bounded group element $A\in G$, we want to construct a multi-curve $\mathcal R_G(A)$. And for any bounded triangle $(X,Y)\in G\times G$, we want to construct a multi-pants $\mathcal R_{G\times G}(X,Y)$ such that the following diagram commutes.
\[\begin{tikzcd}
		G\times G\arrow[d,"\partial"] \arrow[r,"\mathcal{R}_{G\times G}"]& \frac{\mathbb Z\PPi_{\epsilon,R}}{N}\arrow[d,"\partial"]\\
		G\arrow[r,"\mathcal R_G"]&\frac{\mathbb Z\Gamma_{\epsilon,R}}{N}
	\end{tikzcd}\]

	\subsection{Narrow triangle and some preliminaries}
	This and the subsequent subsections are devoted to prove the Narrow Triangle Replacement Lemma.\par  
	We first introduce the notion of narrow triangles. 
	\begin{definition}[Narrow triangle]
		Let $u$ be a unit vector, and let $\epsilon,R>0$. An element $(A_1,A_2)\in LN_{\epsilon,R}(u)\times LN_{\epsilon,R}(u)$ is called a narrow triangle if 
		\begin{enumerate}
			\item $A_3=[\cdot\bar A_2\bar A_1\cdot]\in LN_{\epsilon,R}(u).$
			\item $\Delta(A_1,A_2)\leq  1.5R$, where
			\begin{equation*}
		\begin{aligned}
			\Delta(A_1,A_2)=\max\bigg\{&\len(A_1)+\len(A_2)-\len(A_3)\\
			&,\len(A_1)+\len(A_3)-\len(A_2),\len(A_2)+\len(A_3)-\len(A_1)\bigg\}.
		\end{aligned}
	\end{equation*}
		\end{enumerate}
		Let $(LN\times LN)_{\epsilon,R}(u)$ be the set consisting of all narrow triangles.
	\end{definition}

 In the reminder of this subsection, we prove some preliminaries for the subsequent subsection.

	\begin{proposition}
		Let $(A_1,A_2)$ be a narrow triangle. There exists a unique triple of geodesic arcs $(P_1,P_2,P_3)$ with the same initial point and the same terminal point such that
		\begin{enumerate}
			\item $i(P_1)=-i(P_2)$ and $i(P_3)\perp i(P_1)$.
			\item $A_1=[\cdot\bar P_2P_3\cdot],A_2=[\cdot\bar P_3P_1\cdot]$ and $A_3=[\cdot\bar P_1P_2\cdot]$.
		\end{enumerate}
		The union of $P_1,P_2,P_3$ is called the $T$-graph of the narrow triangle.
	\end{proposition}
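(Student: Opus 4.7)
The idea is to lift everything to $\HH^2$, recognise the data as a geodesic triangle, and read off the $T$-graph as a perpendicular. The identity $A_3=[\cdot\bar A_2\bar A_1\cdot]$ means lifts of $A_1,A_2,A_3$ close up into a geodesic triangle $\tilde\Delta$ with consecutive vertices $v_0,v_1,v_2$ (all lifts of the base point $p$ of $u$), where $\tilde A_1\colon v_0\to v_1$, $\tilde A_2\colon v_1\to v_2$, $\tilde A_3\colon v_2\to v_0$. Matching the desired identities $A_1=[\cdot\bar P_2P_3\cdot]$, $A_2=[\cdot\bar P_3P_1\cdot]$, $A_3=[\cdot\bar P_1P_2\cdot]$ with the sides of this triangle forces the three arcs of any candidate $T$-graph to end at $v_2,v_0,v_1$ respectively, all starting from a single lift $\tilde p^*$ of $p^*$.

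\textbf{Existence.} I define $\tilde p^*$ to be the foot of the perpendicular dropped from $v_1$ onto the geodesic line through $v_0$ and $v_2$ (the line carrying the side $\tilde A_3$), and set $\tilde P_1,\tilde P_2,\tilde P_3$ to be the geodesic segments from $\tilde p^*$ to $v_2,v_0,v_1$ respectively. By construction $\tilde P_1$ and $\tilde P_2$ are subarcs of the same geodesic line emanating from $\tilde p^*$ in opposite directions, while $\tilde P_3$ is perpendicular to them at $\tilde p^*$, so the two angle conditions hold. The homotopy identities follow by inspection: e.g.\ $\bar{\tilde P_2}\tilde P_3$ runs from $v_0$ through $\tilde p^*$ to $v_1$, whose unique geodesic representative with the same endpoints is $\tilde A_1$, and similarly for $A_2,A_3$. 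Projecting to $M$ yields the required $T$-graph.

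\textbf{Uniqueness.} Conversely, any $T$-graph satisfying the stated conditions lifts so that $i(\tilde P_1)=-i(\tilde P_2)$ forces $\tilde p^*$ to lie on the geodesic line through $v_0$ and $v_2$, and $i(\tilde P_3)\perp i(\tilde P_1)$ forces $\tilde P_3$ to be perpendicular to that line. Since the foot of the perpendicular from a point to a geodesic line in $\HH^2$ is unique, $\tilde p^*$ is pinned down, and with it the arcs $\tilde P_1,\tilde P_2,\tilde P_3$ and their projections.

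\textbf{Main obstacle.} The condition $i(\tilde P_1)=-i(\tilde P_2)$ demands that $\tilde p^*$ lie on the actual segment $v_0v_2$ rather than on its extension; equivalently, both angles of $\tilde\Delta$ at $v_0$ and $v_2$ must be acute. By the hyperbolic law of cosines this amounts to the pair of inequalities $\cosh\len(A_1)\cosh\len(A_3)\geq\cosh\len(A_2)$ and $\cosh\len(A_2)\cosh\len(A_3)\geq\cosh\len(A_1)$. The one genuinely computational step of the proof is deriving these from the narrow triangle bound $\Delta(A_1,A_2)\leq 1.5R$ together with the length constraints $L(\epsilon^2)\leq\len(A_i)\leq 2R-L(\epsilon^2)$ imposed by membership in $LN_{\epsilon,R}(u)$; the first inequality yields a quantitative triangle inequality good enough to guarantee the acute-angle estimates for sufficiently large $R$ and small $\epsilon$.
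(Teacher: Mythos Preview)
Your argument is essentially the paper's: lift $A_1A_2A_3$ to a geodesic triangle in $\HH^2$ and take the foot of the perpendicular from the vertex opposite the side carrying $\tilde A_3$, then let $P_i$ be the segments from that foot to the three vertices. The only place you work harder than necessary is the ``main obstacle'': since each $A_i\in Conn_{\epsilon^2,<\infty}(-u,u)$, at every vertex the two adjacent edges leave in directions $\epsilon^2$-close to $-u$, so every interior angle of the lifted triangle is at most $2\epsilon^2<\pi/2$ and the perpendicular foot lands on the segment automatically---no appeal to the bound $\Delta(A_1,A_2)\leq 1.5R$ or to the law of cosines is needed.
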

	\begin{proof} 
		For any two points $p,q\in\mathbb H$, by $[pq]$ we denote the oriented geodesic arc from the point $p$ to the point $q$. The consecutive geodesic arcs $A_1,A_2,A_3$ can be lifted to the hyperbolic plane $\mathbb H$ as a piecewise geodesic curve, which bounds a hyperbolic triangle in $\mathbb H$. By $a_1,a_2,a_3$ we denote three vertex of the triangle such that $[a_1a_2],$ $[a_2a_3],[a_3a_1]$ are the lifting of $A_3,A_1,A_2$ respectively. Since the triangle is thin, there exists a unique point $p\in[a_1a_2]$ such that $[a_3p]$ is perpendicular to $[a_1a_3]$ at the point $p$. Let $P_i$ be the image of the geodesic arc $[pa_i]$ in $M$ for any $i=1,2,3$. 
	\end{proof}
	
	\begin{definition}
	 	The narrow triangle is said to be of positive orientation if the triple of vectors $(i(P_1),i(P_2),i(P_3))$ has positive cyclic ordering in $T^1M_p\approx S^1$; otherwise the narrow triangle is said to be of negative orientation. Note that for any narrow triangle, $(A_1,A_2)$ and $(\bar A_2,\bar A_1)$ have opposite orientations. 
	 \end{definition}

	\begin{proposition}[Small narrow triangle]\label{small-narrow}
	There exist constants $C>0$ and $\epsilon_1,R_1,q_1>0$ depending only on $M$ such that for any $R\geq R_1$ and $\epsilon_1\geq \epsilon\geq e^{-q_1R}$ the following holds. There exists a narrow triangle $(A_1^0,A_2^0)\in LN_{\epsilon,R}(u)\times LN_{\epsilon,R}(u)$ such that  $\len(A_i^0)\leq -C\log\epsilon$ for any $i=1,2,3$.
	\end{proposition}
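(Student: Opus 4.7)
I would construct the required narrow triangle by two applications of the Connection Lemma, with the third arc $A_3^0 = [\cdot\bar A_2^0 \bar A_1^0\cdot]$ automatically defined by the choice of $A_1^0$ and $A_2^0$. The counting of admissible pairs will show that the compatibility condition (that $A_3^0$ itself satisfies the $LN_{\epsilon,R}(u)$ requirements) can be met provided $\epsilon$ is sufficiently small relative to $M$.

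First, fix a length scale $L := C_0\log(1/\epsilon)$, where $C_0 > 0$ is a constant depending only on $M$ to be determined. Provided $C_0 \geq 2q_0^{-1}$ we have $L \geq L(\epsilon^2)$, so the Connection Lemma guarantees $Conn_{\epsilon^2, L}(-u,u)$ is nonempty and we pick any $A_1^0$ from it. Lifting to $\mathbb{H}^2$ with basepoint $\tilde p$, the arc $A_1^0$ corresponds to an element $g_1 \in \pi_1(M,p)$ with $d(\tilde p, g_1\tilde p) \asymp L$.

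Second, I look for $A_2^0 \in Conn_{\epsilon^2, L}(-u,u)$ with the additional property that $A_3^0 := [\cdot\bar A_2^0\bar A_1^0\cdot]$ also lies in $Conn_{\epsilon^2,<\infty}(-u,u)$. In the universal cover, $A_3^0$ is the geodesic from $\tilde p$ to $g_3\tilde p$ where $g_3 = (g_1 g_2)^{-1}$, and the two direction conditions on $A_3^0$ (initial vector $\epsilon^2$-close to $-\tilde u$, terminal vector projecting $\epsilon^2$-close to $u$) restrict $g_3$, equivalently $g_2$, to a specific $\epsilon^2$-tube in $\mathrm{PSL}(2,\mathbb R)$. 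A careful bookkeeping, using parallel transport and the deck-transformation action, translates this into two additional $\epsilon^2$-tolerance constraints on $g_2\tilde p$ beyond those already specified by $A_2^0 \in Conn_{\epsilon^2,L}(-u,u)$. Applying the Connection Lemma (or equivalently Corollary~\ref{testimate} with the refined target region) yields a count of admissible $A_2^0$ of order $e^L \epsilon^{10}$; taking $C_0 \geq 12$ makes this much larger than one, so an admissible $A_2^0$ exists.

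Third, I check the narrow triangle conditions. By the triangle inequality in $\mathbb{H}^2$, $\len(A_3^0) \leq \len(A_1^0) + \len(A_2^0) \leq 2L$, whence $\len(A_i^0) \leq 2L = 2C_0\log(1/\epsilon) =: -C\log\epsilon$ with $C = 2C_0$. The lower bound $\len(A_i^0) \geq L(\epsilon^2)$ holds for $A_1^0$ and $A_2^0$ by construction, and for $A_3^0$ it is secured by adding the condition $d(\tilde p, g_3\tilde p) \geq L(\epsilon^2)$ to the counting in the previous step (this only excises an exponentially small fraction of admissible $g_2$'s, where $g_3\tilde p$ is near $\tilde p$). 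Finally, $\Delta(A_1^0,A_2^0) \leq 2\max_i\len(A_i^0) \leq 2Cq_1 R$ using the hypothesis $\epsilon \geq e^{-q_1 R}$, and choosing $q_1 \leq 3/(4C)$ yields $\Delta \leq 1.5R$ as required.

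The principal obstacle lies in the second step: correctly translating the tangent-vector conditions on $A_3^0$ into a geometric tube constraint on $g_2\tilde p$, and then rigorously verifying that the Connecting Principle gives a positive count inside that tube. This requires bookkeeping the parallel-transport interaction between the $g_i$'s, and choosing $C_0$ large enough to absorb the resulting $\epsilon^{10}$ penalty from the refined constraints while keeping the final length bound linear in $\log(1/\epsilon)$.
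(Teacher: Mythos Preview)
Your approach is genuinely different from the paper's, and as written it has a real gap in the counting step.

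The paper bypasses the whole issue by using the $T$-graph structure established in the proposition immediately preceding this one. It fixes an auxiliary vector $v_0$, applies the Connection Lemma three times to produce spokes
\[
P_1\in Conn_{\epsilon^3,L(\epsilon^3)}(\sqrt{-1}v_0,u),\quad
P_2\in Conn_{\epsilon^3,L(\epsilon^3)}(-\sqrt{-1}v_0,u),\quad
P_3\in Conn_{\epsilon^3,L(\epsilon^3)}(v_0,u),
\]
and then sets $A_1^0=[\cdot\bar P_2P_3\cdot]$, $A_2^0=[\cdot\bar P_3P_1\cdot]$, $A_3^0=[\cdot\bar P_1P_2\cdot]$. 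Because the bends at the concatenation points are close to $\pi/2$ (or $0$), the Right Angle Chain Lemma applies and all three $A_i^0$ automatically lie in $Conn_{\epsilon^2,<\infty}(-u,u)$ with length $\approx 2L(\epsilon^3)$. The relation $A_1^0A_2^0A_3^0\sim 1$ holds by construction, so $A_3^0=[\cdot\bar A_2^0\bar A_1^0\cdot]$ for free. No counting with extra constraints is needed.

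The gap in your argument is the passage ``two additional $\epsilon^2$-tolerance constraints on $g_2\tilde p$ \dots\ yields a count of admissible $A_2^0$ of order $e^L\epsilon^{10}$''. Once $A_1^0$ is fixed, the concatenation $\bar A_2^0\bar A_1^0$ has a bend of angle nearly $\pi$ at the midpoint (since $t(\bar A_2^0)\approx u$ and $i(\bar A_1^0)\approx -u$), so neither the Chain Lemma nor any simple perturbation controls the initial and terminal directions of $A_3^0$. In the cover, the two extra constraints you want to impose on $g_3=(g_1g_2)^{-1}$ are pulled back to constraints on $g_2$ via left translation by $g_1^{-1}$; they are not transverse to the three constraints already placed on $g_2$, and the Connecting Principle (which counts lattice points in a \emph{single} tube in $T^1\mathbb H^2$) does not say anything about the intersection of two such tubes. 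In fact, for a generic choice of $A_1^0$ the intersection can be empty: all five scalar conditions live in the $3$-dimensional group $\mathrm{PSL}(2,\mathbb R)$, so compatibility is a nontrivial condition on $g_1$, not something you can absorb by enlarging $C_0$. The paper's $T$-graph construction is precisely the device that makes the three direction conditions compatible by design.
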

	 \begin{proof}
	 	Let $v_0$ be an auxiliary unit vector, and let 
	 	\begin{equation*}
	 		\begin{aligned}
	 		P_1\in& Conn_{\epsilon^3,L(\epsilon^3)}(\sqrt{-1}v_0,u),\\
	 	P_2\in& Conn_{\epsilon^3,L(\epsilon^3)}(-\sqrt{-1}v_0,u),\\
	 	P_3\in& Conn_{\epsilon^3,L(\epsilon^3)}(v_0,u).
	 		\end{aligned}
	 	\end{equation*}
	 	We define $A_1^0=[\cdot\bar P_2P_3\cdot],A_2^0=[\cdot\bar P_3P_1\cdot]$ and $A_3^0=[\cdot\bar P_1P_2\cdot]$.
	 \end{proof}

	\subsection{The Narrow Triangle Replacement Lemma}
	In this subsection, we prove the Narrow Triangle Replacement Lemma by applying the Four-Part Itemization Theorem.
	\begin{lemma}[Narrow Triangle Replacement Lemma]
	There exist constants $\epsilon_1,R_1,q_1>0$ depending only on $M$ such that for any $R\geq R_1$ and $\epsilon_1\geq \epsilon\geq e^{-q_1R}$ the following holds. Suppose $(A_1,A_2)\in (LN\times LN)_{\epsilon,R}(u)$ is a narrow triangle. Then there exists a multi-pants
	\[
	\mathcal{RT}(A_1,A_2)\in \frac{\mathbb Z}{8\lfloor e^{2R}\rfloor^8}\PPi_{\epsilon,R}
	\]
	such that 
	\[
	\partial\mathcal{RT}(A_1,A_2)=\mathcal R(\partial(A_1,A_2)),
	\]
	and the partial map $\mathcal{RT}$ is $(R\epsilon^{-1})^m$-semirandom with respect to the measure classes $\sigma_C^{\times 2}$ and $\sigma_{\PPi}$ for some $m=m(M)>0$.
	\end{lemma}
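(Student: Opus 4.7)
Given the narrow triangle $(A_1, A_2, A_3)$ with $A_3 = [\cdot \bar A_2 \bar A_1 \cdot]$, the defining relation is the group-theoretic identity $A_1 A_2 A_3 = 1$ in $\pi_1(M,p)$. I would build $\mathcal{RT}(A_1, A_2)$ by applying the Four-Part Itemization Theorem three times, once for each unordered pair of sides, where each application is paired with a right narrow arc $B$ together with its reverse $\bar B$. By Proposition~\ref{replace-inverse}, $\mathcal R(B)+\mathcal R(\bar B)=0$, so each Four-Part Itemization contributes only $\mathcal R(A_i)+\mathcal R(A_j)$ on the right-hand side, free of the ``$\mathcal R(B)$-noise'' that would otherwise be difficult to cancel.

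\textbf{Main construction.} For each unordered pair $\{i,j\}\subset\{1,2,3\}$, use the Random Element Lemma to select an $(\epsilon^2,R)$-effectively random right narrow arc $B_{\{i,j\}}\in RN_{\epsilon,R}(u)$ of length approximately $R-(\len(A_i)+\len(A_j))/2$, so the 4-gon $[A_iB_{\{i,j\}}A_j\bar B_{\{i,j\}}]$ lies in $\Gamma_{\epsilon,R}$. The Four-Part Itemization Theorem then gives
\[
[A_iB_{\{i,j\}}A_j\bar B_{\{i,j\}}]=\partial\,Item_4(A_i,A_j,B_{\{i,j\}},\bar B_{\{i,j\}})+\mathcal R(A_i)+\mathcal R(A_j).
\]
Summing over the three pairs yields
\[
\sum_{\{i,j\}}[A_iB_{\{i,j\}}A_j\bar B_{\{i,j\}}]=\partial\Bigl(\sum_{\{i,j\}}Item_4\Bigr)+2\bigl(\mathcal R(A_1)+\mathcal R(A_2)+\mathcal R(A_3)\bigr),
\]
so the problem reduces to expressing the left-hand sum as a boundary of an explicit multi-pants $\mathcal Q$; then set $\mathcal{RT}(A_1,A_2)=\tfrac12\bigl(\sum_{\{i,j\}}Item_4-\mathcal Q\bigr)$.

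\textbf{Pants identity via the narrow-triangle relation.} Each 4-gon has free homotopy class representing $A_iB_{\{i,j\}}A_jB_{\{i,j\}}^{-1}=\bar A_k\cdot[A_j^{-1},B_{\{i,j\}}^{-1}]$, using $A_iA_j=\bar A_k$ from the narrow-triangle relation. Iteratively applying the Exchange Lemma to commute each right narrow arc through the $A$-factors (at the cost of pants boundaries), the three commutator twists pair up and cancel, combining the three 4-gon curves into a single closed loop with holonomy $A_1A_2A_3=1$. Since this loop is null-homotopic, it equals $0$ in $\mathbb Z\Gamma_{\epsilon,R}$, exhibiting the formal sum as the boundary of the explicit multi-pants $\mathcal Q$ produced by the Exchange Lemma applications.

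\textbf{Main obstacle.} Two issues are delicate. First, the Exchange Lemma orchestration: verifying that the commutator cancellation can be realized as a genuine boundary of pants in $\frac{\mathbb Z}{8\lfloor e^{2R}\rfloor^8}\PPi_{\epsilon,R}$, rather than merely a formal identity in $\pi_1$, while keeping every intermediate closed curve in $\Gamma_{\epsilon,R}$. Second, the length-budget issue: when every pairwise sum $\len(A_i)+\len(A_j)$ exceeds $2R-2L(\epsilon^2)$, one cannot directly construct the 4-gons with narrow right-arcs; the narrow-triangle hypothesis $\Delta(A_1,A_2)\le 1.5R$ must be used either to find a usable pair or to subdivide the longest side into shorter sub-arcs and reduce to smaller narrow triangles. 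The semirandom norm $(R\epsilon^{-1})^m$ then follows by composing the semirandom bounds of Four-Part Itemization ($(R\epsilon^{-1})^{30}$), the Exchange Lemma ($(R\epsilon^{-1})^{26}$), and the random-element selections ($\epsilon^{-6}$ per arc), and the denominator bound $8\lfloor e^{2R}\rfloor^8$ accumulates from stacking the $2\lfloor e^{2R}\rfloor^5$ denominator of each $Item_4$ with the additional random-element factors introduced during the Exchange-Lemma step.
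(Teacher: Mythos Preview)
Your setup via Four-Part Itemization with $(B,\bar B)$ so that $\mathcal R(B)+\mathcal R(\bar B)=0$ is clean, but the crucial ``pants identity'' step has a genuine gap. You need the three 4-gons $\sum_{\{i,j\}}[A_iB_{\{i,j\}}A_j\bar B_{\{i,j\}}]$ to be a boundary in $\mathbb Z\PPi_{\epsilon,R}$, and you propose to achieve this with the Exchange Lemma. But the Exchange Lemma only produces $[A_1B_1A_2B_2]-[A_1B_2A_2B_1]=\partial Exch$: it swaps the two $B$-slots inside a \emph{single} 4-gon. It cannot merge two distinct 4-gons into one curve, nor can it ``use'' the group relation $A_1A_2A_3=1$ at the level of good-pants homology. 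Your endgame of reaching a null-homotopic loop is also problematic: such a loop is not a closed geodesic and does not live in $\Gamma_{\epsilon,R}$, so it does not literally become $0$ there; you would need an explicit bounding pants, and none of the available lemmas supplies one. (Separately, your length-budget worry is real---when all three sides are near $1.5R$ the pairwise sums exceed $2R$---but this is secondary to the structural gap.)

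The paper resolves this differently. It does not try to make one triangle's contribution vanish in isolation. Instead, using the $T$-graph $(P_1,P_2,P_3)$ of $(A_1,A_2)$ and the $T$-graph $(S_1,S_2,S_3)$ of a second narrow triangle $(A_1',A_2')$ of \emph{opposite orientation}, it connects $P_i$ to $S_i$ by random arcs $B_i$ to form a genuine $\theta$-graph. This directly yields a single pair of pants $P$ whose three boundary curves are 4-gons $[A_kB_\bullet\bar A_k'\bar B_\bullet]$. Applying Four-Part Itemization to each of these gives $\partial Rot=\mathcal R(\partial(A_1,A_2))-\mathcal R(\partial(A_1',A_2'))$. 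The absolute version follows by taking $(A_1',A_2')=(A_1^0,A_2^0)$ a fixed small narrow triangle (Proposition~\ref{small-narrow}) and then cancelling its contribution against its own reverse via $\mathcal{RT}(A_1,A_2)=Rot(A_1,A_2,A_1^0,A_2^0)+\tfrac12 Rot(A_1^0,A_2^0,\bar A_2^0,\bar A_1^0)$. The $\theta$-graph construction is exactly the missing geometric ingredient that realizes the triangle relation as an honest pants boundary, and the comparison with a fixed small triangle is what sidesteps your length-budget obstruction.
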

	\begin{proof}
	
	\textbf{Claim: }Suppose $(A_1,A_2)$ and $(A_1^\prime,A_2^\prime)$ are two narrow triangles with opposite orientations such that $\Delta(A_1,A_2)+\Delta(A_1^\prime,A_2^\prime)\leq 1.6R$. Then there exists a multi-pants
	\[
	Rot(A_1,A_2,A_1^\prime,A_2^\prime)\in\frac{\mathbb Z}{4\lfloor e^{2R}\rfloor^8}\PPi_{\epsilon,R}
	\]
	such that 
	\[
	\partial Rot(A_1,A_2,A_1^\prime,A_2^\prime)=\mathcal R(\partial(A_1,A_2))-\mathcal R(\partial(A_1^\prime,A_2^\prime)).
	\]\par 
	We first construct the map $\mathcal{RT}$ by assuming the claim. Let $(A^0_1,A^0_2)$ be the small narrow triangle in Proposition \ref{small-narrow}. Without lose of generality, we can assume $(A_1,A_2)$ and $(A^0_1,A^0_2)$ have the opposite orientations (otherwise we replace $(A^0_1,A^0_2)$ with $(\bar{A_2^0},\bar{A_1^0})$). Meanwhile, we have 
	\[\Delta(A^0_1,A^0_2)\leq -C\log\epsilon\leq 0.1R\] 
	providing $\epsilon\geq e^{-\frac{R}{10C}}$. Thus, both of two pairs of narrow triangles $(A_1,A_2,A^0_1,A^0_2)$ and $(A^0_1,A^0_2,\bar{A_2^0},\bar{A_1^0})$ satisfy the condition of the claim.  We define
	\[
	\mathcal{RT}(A_1,A_2)=Rot(A_1,A_2,A^0_1,A^0_2)+\frac 12Rot(A^0_1,A^0_2,\bar{A_2^0},\bar{A_1^0}).
	\]
	According to the claim, $\mathcal{RT}(A_1,A_2)$ satisfies the needed property.
	\begin{proof}[\textbf{Proof of the claim}]
	Let $r_1,r_2,r_3$ be constants defined by the following equations
		\begin{equation*}
			\begin{aligned}
				r_1+r_2&=2R-\len(A_3)-\len(A_3^\prime)\\
				r_2+r_3&=2R-\len(A_1)-\len(A_1^\prime)\\
				r_1+r_3&=2R-\len(A_2)-\len(A_2^\prime)
			\end{aligned}
		\end{equation*}
		Let $\mathcal F_i=Conn_{\epsilon^2,r_i}(u,-u)$. Since $\Delta(A_1,A_2)+\Delta(A_1^\prime,A_2^\prime)\leq 1.6R$, we have $r_i\geq 0.2R$ for any $i=1,2,3$.\par 
		Let $(R_1,R_2,R_3)$ be the $T$-graph of $(A_1,A_2)$, and let $(S_1,S_2,S_3)$ be the $T$-graph of $(A_1^\prime,A_2^\prime)$. Since two $T$-graphs have opposite orientations, for any triple of elements $B=\{B_i\in\mathcal F_i\}_{i=1,2,3}$, the union of the triple of geodesic arcs 
		\[[\cdot R_1B_1\overline S_1\cdot]\cup[\cdot R_2B_2\overline S_2\cdot]\cup[\cdot R_3B_3\overline S_3\cdot]\] 
		is a $\theta$-graph. We denote the immersed pair of pants constructed by the $\theta$-graph by $P(A_1,A_2,A_1^\prime,A_2^\prime,B)$. The pair of pants $P$ has the following boundary
		\begin{equation*}
			\begin{aligned}
				\partial P(A_1,A_2,A_1^\prime,A_2^\prime,B)=\left[A_3B_2\bar A_3^\prime \bar B_1\right]+\left[A_1B_3\bar A_1^\prime\bar B_2\right]+\left[A_2B_3\bar A_2^\prime\bar B_1\right].
			\end{aligned}
		\end{equation*}
		By applying the Chain Lemma, the above three geodesic curves are good curves. Therefore, $P\in\PPi_{\epsilon,R}$. Let
		\begin{equation*}
			\begin{aligned}
		Rot^*(A_1,A_2,A_1^\prime,A_2^\prime,B)&=P(A_1,A_2,A_1^\prime,A_2^\prime,B)-Item(A_3,B_2,\bar A_3^\prime, \bar B_1)\\
				-Item&(A_1,B_3,\bar A_1^\prime,\bar B_2)-Item(A_2,B_3,\bar A_2^\prime,\bar B_1),
			\end{aligned}
		\end{equation*}
		By applying the Four-Part Itemization Theorem, for any  $B\in\mathcal F_1\times\mathcal F_2\times\mathcal F_3$, the following holds
		\begin{equation*}
			\begin{aligned}
				\partial Rot^*(A_1,A_2,A_1^\prime,A_2^\prime,B)&=\sum_{i=1}^3\mathcal R(A_i)+\sum_{i=1}^3\mathcal R(\bar{A_i^\prime})\\
				&=\mathcal{R}\big(\partial (A_1,A_2)\big)-\mathcal{R}\big(\partial (A_1^\prime,A_2^\prime)\big).
			\end{aligned}
		\end{equation*}
		Let $\underline B_i$ be an $(\epsilon^2,R)$-effectively random element in $\mathcal F_i$, and let $\underline B=(\underline B_1,\underline B_2,\underline B_3)$. Letting
		\[
		Rot(A_1,A_2,A_1^\prime,A_2^\prime)=Rot^*(A_1,A_2,A_1^\prime,A_2^\prime,\underline B),
		\]
		this completes the proof of the claim.
		\end{proof}
		\textbf{ Estimate the semirandom norm. } 
		We first estimate the semirandom norm of the map $Rot$. By definition, we have
		\begin{equation}
			\begin{aligned}\label{E5}
				|P|_*(\sigma_C^{\times 7})(\Pi)&=\sum_{A_1,A_2,A_1^\prime,A_2^\prime,B}e^{-\len(A_1)-\len(A_2)-\len(A_1^\prime)-\len(A_2^\prime)-\sum_ir_i}
			\end{aligned}
		\end{equation}
		where the summation is over all elements $(A_1,A_2,A_1^\prime,A_2^\prime,B)$ such that $P(A_1,A_2,A_1^\prime,A_2^\prime,B)=\Pi$. In the spirit of Proposition \ref{8}, there are at most $C_1R^8$ many elements $(A_1,A_2,A_1^\prime,A_2^\prime,B)$ in the summation for some $C_1=C_1(M)>0$. According to the definition of $r_i$, we have
		\begin{equation}\label{E6}
			\sum_i r_i\geq  3R-\len(X)-\len(Y)-\len(Z)-\len(W).
		\end{equation}
		Combining this with estimate (\ref{E5}), we have $||P||_{s.r.}\leq C_1R^8$. Since $||Rot^*||_{s.r.}$ is bounded by the semirandom norms of the map $P$ and the map $Item$, we have $||Rot^*||_{s.r.}\leq (R\epsilon^{-1})^{30}$. The randomized map $Rot$ is a $(R\epsilon^{-1})^{40}$-semirandom map.\par 
		The map $\mathcal{RT}$ is the composition of the map $Rot$ and the map 
		\[
		(A_1,A_2)\mapsto (A_1,A_2,A^0_1,A^0_2)-\frac 12(A^0_1,A^0_2,\bar{A_2^0},\bar{A_1^0}).
		\]
		Since $(A_1^0,A_2^0)$ is a small narrow triangle and $\len(A_i^0)\leq -C\log\epsilon$ for any $i=1,2$, we obtain that the latter map is $\epsilon^{-4C}$-semirandom, where $C=C(M)$ is the constant in Proposition \ref{small-narrow}. This implies that $||\mathcal{RT}||_{s.r.}\leq (R\epsilon^{-1})^m$ for some $m=m(M)>0$, and completes the proof.
	\end{proof}

	\subsection{Bounded group element and bounded triangle}
 In this subsection, given $\epsilon,R>0$ and a unit vector $v$, we define the notion of bounded group element and bounded triangle.\par 
 Let $*$ be a base point, and let $G=\pi_1(M,*)$ be the fundamental group of the surface $M$. A group element $A\in G$ is identified with the geodesic arc from $*$ to itself in the homotopy class of $A$. Let $v\in T^1M_*$ be a unit vector supported on $*$. For any $l>0$, by $e^{-lv}$ we denote the length $l$ geodesic arc with initial vector $-v$. For any $A\in G$, we denote
  	\[A^{lv}=[\cdot\overline{e^{-lv}}Ae^{-lv}\cdot].\]
	  \begin{definition}[Stretching maps]
	For any $A\in G$, we define 
	\[Str(A):=A^{L(\epsilon^3)v},\] 
	 where 
	 \[L(\epsilon^3)=\max\{-q_0^{-1}\log\epsilon^3,-\log\epsilon^3,L_0\}\]
	  is the constant in the Connection Lemma. For any $(X,Y)\in G\times G$, we define $Str(X,Y)=(Str(X),Str(Y))$.
	\end{definition}
	From now on, by $u$ we denote the terminal unit vector of the geodesic arc $e^{-L(\epsilon^3)v}$. We define the set of bounded group elements as the pre-image of the left narrow connections with respect to the vector $u$.
	\begin{definition}
		A group element $A\in G$ is called $(\epsilon,R,v)$-bounded if $Str(A)\in LN_{\epsilon,R}(u)$. An element $(X,Y)\in G\times G$ is called a $(\epsilon,R,v)$-bounded triangle if $Str(X,Y)\in (LN\times LN)_{\epsilon,R}(u)$.
	\end{definition} 
	The length of the stretchment is carefully chosen, such that the semirandom norm of the stretching map is not too large (see Proposition \ref{T-semi}), and the geometry of the stretched element $Str(A)$ can be effectively controlled by the inefficiency of $A$ (see Lemma \ref{T-esit}).
	\begin{proposition}\label{T-semi}
		Let $q_0$ be the constant in Connecting Principle. There exist a constant $\epsilon_1>0$ depending only on $M$ such that for any $\epsilon_1\geq \epsilon>0$ the stretching map $Str$
		is $\epsilon^{-m}$-semirandom with respect to the measure classes $\sigma_G$ and $\sigma_C$ for $m=6(q_0^{-1}+1)$. 
	\end{proposition}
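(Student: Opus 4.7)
The plan rests on two elementary observations: the stretching map $Str$ is injective, and geodesic straightening is length non-increasing. Together these yield a pointwise inequality $|Str|_*\sigma_G(\gamma)\leq e^{2L(\epsilon^3)}\sigma_C(\gamma)$, from which the claim follows by plugging in the explicit formula for $L(\epsilon^3)$.

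\textbf{Injectivity of $Str$.} First I would verify that $Str:G\rightarrow Conn$ is injective. Write $L=L(\epsilon^3)$ and let $p$ denote the terminal endpoint of $e^{-Lv}$. Suppose $A,A^\prime\in G$ satisfy $Str(A)=Str(A^\prime)$. Both piecewise geodesic arcs $\overline{e^{-Lv}}Ae^{-Lv}$ and $\overline{e^{-Lv}}A^\prime e^{-Lv}$ are loops based at $p$, and by definition each is homotopic rel $p$ to the common geodesic arc $Str(A)=Str(A^\prime)$. Hence the two piecewise geodesic loops are homotopic to one another rel $p$. Concatenating on the left with $e^{-Lv}$ and on the right with $\overline{e^{-Lv}}$ (which contribute trivial loops at $*$) gives $A\simeq A^\prime$ rel $*$, so $A=A^\prime$ in $G$.

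\textbf{Pointwise measure bound.} Fix an arbitrary $\gamma\in Conn$. By injectivity, the sum
\[
|Str|_*\sigma_G(\gamma)=\sum_{A\in G:\,Str(A)=\gamma}e^{-\len(A)}
\]
contains at most one nonzero term, contributed by the unique preimage $A_0$ (if any). Since $Str(A_0)$ is the geodesic rel-endpoint representative of the piecewise geodesic $\overline{e^{-Lv}}A_0e^{-Lv}$ of total length $2L+\len(A_0)$, and straightening cannot increase length,
\[
\len(\gamma)=\len(Str(A_0))\leq 2L+\len(A_0).
\]
Rearranging and exponentiating gives $e^{-\len(A_0)}\leq e^{2L}e^{-\len(\gamma)}=e^{2L}\sigma_C(\gamma)$, so $|Str|_*\sigma_G(\gamma)\leq e^{2L}\sigma_C(\gamma)$.

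\textbf{Estimating $L$ and finishing.} Pick $\epsilon_1\leq e^{-L_0/3}$ so that whenever $0<\epsilon\leq\epsilon_1$, the term $L_0$ is dominated in the definition $L(\epsilon^3)=\max\{-q_0^{-1}\log\epsilon^3,\,-\log\epsilon^3,\,L_0\}$. Then
\[
L(\epsilon^3)\leq 3\max(1,q_0^{-1})|\log\epsilon|\leq 3(q_0^{-1}+1)|\log\epsilon|,
\]
so $e^{2L(\epsilon^3)}\leq\epsilon^{-6(q_0^{-1}+1)}=\epsilon^{-m}$. Summing the pointwise bound over $\gamma$ yields $|Str|_*\sigma_G\leq\epsilon^{-m}\sigma_C$ as measures on $Conn$, which by definition means $Str$ is $\epsilon^{-m}$-semirandom.

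\textbf{Main difficulty.} The proof is largely routine once the injectivity of $Str$ is recognized; the only care required is in bookkeeping the constants so that the claimed exponent $m=6(q_0^{-1}+1)$ emerges cleanly from the formula for $L(\epsilon^3)$, which explains why the paper chose to stretch by exactly $L(\epsilon^3)$ rather than an arbitrary comparable quantity.
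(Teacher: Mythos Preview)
Your proof is correct and follows essentially the same approach as the paper: injectivity of $Str$, the triangle inequality $\len(Str(A))\leq\len(A)+2L(\epsilon^3)$, and the choice $\epsilon_1=e^{-L_0/3}$ to bound $e^{2L(\epsilon^3)}\leq\epsilon^{-m}$. The paper merely asserts injectivity without justification and is terser on the constant bookkeeping, but the argument is the same.
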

	\begin{proof}
	Because the stretching map $Str$ is injective, the following holds
		\begin{equation*}
			\begin{aligned}
				|Str|_*(\sigma_G)(Str(A))=\sigma_G(A)=\frac{e^{\len(Str(A))}}{e^{\len(A)}}\sigma_C(Str(A)).
			\end{aligned}
		\end{equation*}
		By applying the triangle inequality, we obtain that
		\[
		\len(Str(A))\leq \len(A)+2L(\epsilon^3).
		\]
		Thus, letting $\epsilon_1=e^{-L_0/3}$, we get $||Str||_{s.r.}\leq e^{2L(\epsilon^3)}\leq\epsilon^{-m}$.
	\end{proof}

	For any group element $A\in G$, we define the inefficiency of $A$ with respect to $v$ as 
  	\[
I_v(A)=\lim_{l\rightarrow\infty}I\left(\overline{e^{-lv}}Ae^{-lv}\right).
\]
The following proposition establishes that the group elements with bounded inefficiency is uniformly bounded.
	\begin{lemma}\label{T-esit}
	There exist constants $\epsilon_1,R_1,q_1>0$ depending only on the surface $M$ and the constant $D>0$ such that for any $R\geq R_1$ and $\epsilon_1\geq \epsilon\geq e^{-q_1R}$ the following holds. Suppose $A\in G$ such that $\len(A)\leq 1.5R$ and $I_v(A)\leq D$. Then $A$ is a $(\epsilon,R,v)$-bounded group element.\par  Moreover, suppose $(X,Y)\in G\times G$ such that $X,Y,XY\in G$ are $(\epsilon,R,v)$-bounded and $\Delta(X,Y)\leq 1.4R$, then $(X,Y)$ is a $(\epsilon,R,v)$-bounded triangle.
	\end{lemma}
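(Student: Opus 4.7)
I would prove both statements by directly unpacking the defining inequalities of $LN_{\epsilon,R}(u)$ and $(LN\times LN)_{\epsilon,R}(u)$ from the inefficiency hypothesis. For the first statement, set $l=L(\epsilon^3)$ and consider the piecewise geodesic arc $\alpha = \overline{e^{-lv}}\cdot A\cdot e^{-lv}$, whose geodesic representative is $Str(A)$. First I would observe that the map $l\mapsto I(\overline{e^{-lv}}Ae^{-lv})$ is monotone non-decreasing, since extending $\alpha_l$ by geodesic segments on both ends produces a path homotopic to $\alpha_{l'}$ and of length $\len(\gamma_l)+2(l'-l)$, so the triangle inequality applied to the geodesic representative gives $\len(\gamma_{l'})\le\len(\gamma_l)+2(l'-l)$, i.e.\ $I_{l'}\ge I_l$. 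Consequently $I(\alpha)\le I_v(A)\le D$, and from the identity $\len(Str(A))=\len(A)+2L(\epsilon^3)-I(\alpha)$ I obtain
\begin{equation*}
2L(\epsilon^3)-D\;\le\;\len(Str(A))\;\le\;1.5R+2L(\epsilon^3),
\end{equation*}
which lies inside $[L(\epsilon^2),\,2R-L(\epsilon^2)]$ once $q_1$ is small relative to $q_0$ and $R_1$ is large relative to $D$.

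For the angle conditions, I would apply Lemma \ref{new-angle} to $\alpha$ decomposed as $(\overline{e^{-lv}}\cdot A)\cdot e^{-lv}$. Since $\len(e^{-lv})=L(\epsilon^3)\ge I(\alpha)+1$ once $\epsilon$ is small, the lemma yields
\begin{equation*}
\Theta\bigl(t(Str(A)),\,u\bigr)\;\le\;Ce^{I(\alpha)-L(\epsilon^3)}\;\le\;Ce^D\epsilon^3\;\le\;\epsilon^2,
\end{equation*}
choosing $\epsilon_1$ small enough depending on $D$. Running the same argument on the reversed arc controls the initial vector of $Str(A)$ by $-u$ with the same bound. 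This places $Str(A)$ in $LN_{\epsilon,R}(u)$, proving $A$ is $(\epsilon,R,v)$-bounded.

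For the triangle part, let $A_1=Str(X)$, $A_2=Str(Y)$, and $A_3=[\cdot\bar A_2\bar A_1\cdot]$. Using the cancellation $e^{-lv}\cdot\overline{e^{-lv}}\simeq\mathrm{const}$ inside the homotopy class of $Str(X)\cdot Str(Y)$, one obtains $[\cdot Str(X)\, Str(Y)\cdot]=Str(XY)$, hence $A_3=\overline{Str(XY)}$. Since membership in $LN_{\epsilon,R}(u)$ is invariant under reversal (reversing swaps the initial and terminal directions, sending $-u,u$ back to $-u,u$, and preserves length), the hypothesis $Str(XY)\in LN_{\epsilon,R}(u)$ gives $A_3\in LN_{\epsilon,R}(u)$. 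For the $\Delta$-bound I would write $\len(Str(Z))=\len(Z)+2L(\epsilon^3)-I_0(Z)$ with $0\le I_0(Z)\le D$ for each $Z\in\{X,Y,XY\}$; substituting, each of the three signed sums entering $\Delta(Str(X),Str(Y))$ differs from its counterpart in $\Delta(X,Y)$ by at most $2L(\epsilon^3)+D$, so
\begin{equation*}
\Delta(Str(X),Str(Y))\;\le\;\Delta(X,Y)+2L(\epsilon^3)+D\;\le\;1.4R+2L(\epsilon^3)+D\;\le\;1.5R,
\end{equation*}
again for $q_1$ small and $R_1$ large.

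\textbf{Main obstacle.} The technical crux is not any single inequality but the simultaneous calibration of $\epsilon_1,R_1,q_1$ so that $Ce^D\epsilon^3\le\epsilon^2$, $2L(\epsilon^3)-D\ge L(\epsilon^2)$, $1.5R+2L(\epsilon^3)\le 2R-L(\epsilon^2)$, and $2L(\epsilon^3)+D\le 0.1R$ hold at once. All four reduce, in the regime $\epsilon\ge e^{-q_1R}$, to requiring $q_1$ smaller than a universal multiple of $q_0$ together with $R_1$ chosen large relative to $D$ and the universal constant from Lemma \ref{new-angle}. A secondary (but routine) point is the monotonicity of $l\mapsto I_l$ sketched above, which is what legitimizes replacing the limit $I_v(A)$ by the finite-$l$ inefficiency $I(\alpha)$ at $l=L(\epsilon^3)$.
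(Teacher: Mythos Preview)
Your approach is essentially the paper's own: bound the inefficiency at the finite stretch length by monotonicity, read off the length window for $Str(A)$, and invoke Lemma~\ref{new-angle} for the angle condition, then handle the triangle part via $\Delta$. You are in fact more explicit than the paper on two points: you sketch why $l\mapsto I_l$ is monotone, and you verify the narrow-triangle condition $A_3\in LN_{\epsilon,R}(u)$ by identifying $A_3=\overline{Str(XY)}$ and noting $LN_{\epsilon,R}(u)$ is reversal-invariant---the paper simply asserts ``it suffices to prove $\Delta(Str(X,Y))\le 1.5R$'' without spelling this out.

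There is one small gap. In the $\Delta$-estimate you write $\len(Str(Z))=\len(Z)+2L(\epsilon^3)-I_0(Z)$ with $0\le I_0(Z)\le D$ for $Z\in\{X,Y,XY\}$, but the ``moreover'' hypothesis only says $X,Y,XY$ are $(\epsilon,R,v)$-bounded; it does \emph{not} give an inefficiency bound $I_0(Z)\le D$. Nothing prevents, say, $X$ from beginning and ending along $\pm v$ so that the stretch cancels a lot of $X$ and $I_0(X)$ is large. The paper sidesteps this entirely: from the plain triangle inequality one has $|\len(Str(Z))-\len(Z)|\le 2L(\epsilon^3)$, hence $\Delta(Str(X),Str(Y))\le\Delta(X,Y)+6L(\epsilon^3)$, and then $6L(\epsilon^3)\le 0.1R$ for suitable $q_1$. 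Swap this in for your $2L(\epsilon^3)+D$ bound and your argument goes through unchanged.
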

	\begin{proof}
	Since for any $l>0$
  	\[I\left(\overline{e^{-lv}}Ae^{-lv}\right)\leq I_v(A)\leq D,\]
  	by the definition of the inefficiency, we have
  	\[
  	\len(A)+2l-\len(A^{lv})=I\left(\overline{e^{-lv}}Ae^{-lv}\right)\leq D.
  	\] 
  	Combining this with the triangle inequality, for any $l>0$, we get
  	\[
  	\len(A)+2l-D\leq\len(A^{lv})\leq \len(A)+2l.
  	\]
  	In particular, we have the following length estimate
  	\begin{equation}\label{Eq1}
  		2L(\epsilon^3)-D\leq\len(Str(A))\leq 1.5R+2L(\epsilon^3).
  	\end{equation}
	By applying Lemma \ref{new-angle}, the following angle estimate holds for any $l>D+1$,
  		\begin{equation*}
		\begin{aligned}
			\Theta\bigg(i\left(A^{lv}\right),t\left(\overline{e^{-lv}}\right)\bigg),\Theta\bigg(t\left(A^{lv}\right),t\left(e^{-lv}\right)\bigg)&\leq Ce^{-l},
		\end{aligned}		
	\end{equation*}
	for some constant $C=C(D)>0$. In particular, we obtain  
	\begin{equation}\label{Eq2}
		Str(A)\in Conn_{C\epsilon^3,<\infty}(-u,u).
	\end{equation}
	Let $\epsilon_1,R_1,q_1>0$ be constnats such that for any $R\geq R_1$ and $\epsilon_1\geq \epsilon\geq e^{-q_1R}$ we have
	\[
	L(\epsilon^2)\leq 2L(\epsilon^3)-D,
	\]
	\[
	1.5R+2L(\epsilon^3)\leq 2R-L(\epsilon^2),
	\]
	and 
	\[
	C\epsilon^3\leq\epsilon^2.
	\]
	Combining these assumptions with Equations (\ref{Eq1}) and (\ref{Eq2}), we obtain that $Str(A)\in LN_{\epsilon,R}(u)$. This implies that $A$ is $(\epsilon,R,v)$-bounded. In order to prove that the moreover part, it suffices to prove that $\Delta(Str(X,Y))\leq 1.5R$. This is establishes by the following estimate
	\[
	\Delta(Str(X),Str(Y))\leq \Delta(X,Y)+6L(\epsilon^3).
	\]
	\end{proof}

	\subsection{The bounded group homology and the Group-to-Pants Theorem}
	Recall that the boundary map in the group homology is defined by 
		\begin{equation*}
			\begin{aligned}
				\partial :G\times G\rightarrow\mathbb ZG&&(X,Y)\mapsto X+Y-XY.
			\end{aligned}
		\end{equation*}
 By $G_{\epsilon,R}(v)$ we denote the set of all $(\epsilon,R,v)$-bounded group elements. By $(G\times G)_{\epsilon,R}(v)$ we denote the set of all $(\epsilon,R,v)$-bounded triangles. The restriction of the standard group homology is called the bounded group homology
 \[
 \partial:(G\times G)_{\epsilon,R}(v)\rightarrow\mathbb ZG_{\epsilon,R}(v).
 \] 
 The main purpose of this section is to prove the following theorem, which establishes the connection between the bounded group homology and the good pants homology. So that the theorem is called the Group-to-Pants Theorem.
\begin{theorem}[Group-to-Pants Theorem]
		There exist constants $R_1,\epsilon_1,q_1>0$ depending only on the surface $M$ such that for any $R\geq R_1$ and $\epsilon_1\geq \epsilon\geq e^{-q_1R}$ the following holds. Let $v$ be any unit vector in $T^1M_*$. There exist maps
		\[
		\mathcal{R}_G:G_{\epsilon,R}(v)\rightarrow \frac{\mathbb Z\Gamma_{\epsilon,R}}{8\lfloor e^{2R}\rfloor^8},
		\]
		and 
		\[\mathcal{R}_{G\times G}:(G\times G)_{\epsilon,R}(v)\rightarrow \frac{\mathbb Z\PPi_{\epsilon,R}}{8\lfloor e^{2R}\rfloor^8},\]
		such that for any $(X,Y)\in (G\times G)_{\epsilon,R}(v)$ we have
		\[
		\partial \mathcal R_{G\times G}(X,Y)=\mathcal R_G\big(\partial (X,Y)\big),
		\]
	and the map $\mathcal {R}_{G\times G}$ is $(R\epsilon^{-1})^m$-semirandom with respect to the measure classes $\sigma_G\times\sigma_G$ and $\sigma_{\PPi}$  for some $m=m(M)>0$.
	\end{theorem}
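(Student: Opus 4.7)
The plan is to define the required maps by composition: set
\[
\mathcal R_G(A) := \mathcal R\bigl(Str(A)\bigr),\qquad \mathcal R_{G\times G}(X,Y):=\mathcal{RT}\bigl(Str(X),Str(Y)\bigr),
\]
where $\mathcal R$ is the replacement map from Section 5 and $\mathcal{RT}$ is produced by the Narrow Triangle Replacement Lemma. By the very definition of the bounded locus, $Str$ sends $G_{\epsilon,R}(v)$ into $LN_{\epsilon,R}(u)$ and $Str\times Str$ sends $(G\times G)_{\epsilon,R}(v)$ into $(LN\times LN)_{\epsilon,R}(u)$, so both compositions are defined wherever the theorem requires. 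The denominator $2\lfloor e^{2R}\rfloor$ produced by $\mathcal R$ and the denominator $8\lfloor e^{2R}\rfloor^8$ produced by $\mathcal{RT}$ both divide $8\lfloor e^{2R}\rfloor^8$, so the stated codomains are correct.

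To verify the compatibility square, the key geometric observation is that $[Str(X)\cdot Str(Y)]=Str(XY)$. Indeed, the concatenation $\overline{e^{-lv}}Xe^{-lv}\cdot\overline{e^{-lv}}Ye^{-lv}$ (with $l=L(\epsilon^3)$) contains a canceling pair $e^{-lv}\overline{e^{-lv}}$ in the middle and is therefore homotopic rel endpoints to $\overline{e^{-lv}}(XY)e^{-lv}$, whose geodesic representative is $Str(XY)$ by definition. Writing $A_1=Str(X)$, $A_2=Str(Y)$, and $A_3=[\cdot\bar A_2\bar A_1\cdot]$, this gives $A_3=\overline{Str(XY)}$. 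Applying the Narrow Triangle Replacement Lemma to $(A_1,A_2)$ (which is a narrow triangle exactly because $(X,Y)$ is a bounded triangle) and then invoking Proposition \ref{replace-inverse} to rewrite $\mathcal R(A_3)=-\mathcal R(Str(XY))$ yields
\[
\partial\mathcal R_{G\times G}(X,Y)=\mathcal R(Str(X))+\mathcal R(Str(Y))-\mathcal R(Str(XY))=\mathcal R_G\bigl(\partial(X,Y)\bigr).
\]

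For the randomness norm, $\mathcal R_{G\times G}$ is the composition $\mathcal{RT}\circ(Str\times Str)$. Proposition \ref{T-semi} gives $\Vert Str\Vert_{s.r.}\le\epsilon^{-m_1}$ with $m_1=6(q_0^{-1}+1)$ from $\sigma_G$ to $\sigma_C$, so $\Vert Str\times Str\Vert_{s.r.}\le\epsilon^{-2m_1}$ from $\sigma_G\times\sigma_G$ to $\sigma_C^{\times 2}$; composing with the $(R\epsilon^{-1})^{m_2}$-bound of $\mathcal{RT}$ produces a $(R\epsilon^{-1})^{2m_1+m_2}$-semirandom map, so any $m\ge 2m_1+m_2$ suffices. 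The main obstacle, and the only step that is not routine bookkeeping, is the geometric matching between group multiplication and arc concatenation: the identity $[Str(X)\cdot Str(Y)]=Str(XY)$ and the verification that the narrow-triangle condition $\Delta(A_1,A_2)\le 1.5R$ persists under stretching. The first is the cancellation argument above; for the second I would combine the ``moreover'' part of Lemma \ref{T-esit} with the elementary inequality $\Delta(Str(X),Str(Y))\le\Delta(X,Y)+O(L(\epsilon^3))$, which is controlled once $R$ is large relative to $L(\epsilon^3)$. Once these checks are in place, the theorem reduces entirely to applying machinery already built in the previous sections.
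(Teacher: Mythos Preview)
Your proposal is correct and follows essentially the same route as the paper: define $\mathcal R_G=\mathcal R\circ Str$ and $\mathcal R_{G\times G}=\mathcal{RT}\circ Str$, use that $Str$ intertwines the two boundary maps (your cancellation argument is exactly what the paper means by ``the left square naturally commutes''), invoke the Narrow Triangle Replacement Lemma for the right square, and bound the semirandom norm multiplicatively. Your closing remarks about checking $\Delta(Str(X),Str(Y))\le 1.5R$ via Lemma~\ref{T-esit} are not needed here, since by definition a bounded triangle is one whose stretched image already lies in $(LN\times LN)_{\epsilon,R}(u)$; that verification was absorbed into the proof of Lemma~\ref{T-esit} itself.
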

	\begin{proof}
		
	 	 According to the definition of the bounded group homology,  the left square in the following diagram naturally commutes.
	 \[
 \begin{tikzcd}
		(G\times G)_{\epsilon,R}(v)\arrow[d,"\partial"] \arrow[r,"Str"]&(LN\times LN)_{\epsilon,R}(u)\arrow[d,"\partial"] \arrow[r,"\mathcal{RT}"]&\frac{\mathbb Z\PPi_{\epsilon,R}}{8\lfloor e^{2R}\rfloor^8}\arrow[d,"\partial"]\\
		G_{\epsilon,R}(v)\arrow[r,"Str"]&LN_{\epsilon,R}(u) \arrow[r,"\mathcal{R}"]&\frac{\mathbb Z\Gamma_{\epsilon,R}}{8\lfloor e^{2R}\rfloor^8}
	\end{tikzcd}
 \]
 According to the Narrow Triangle Replacement Lemma, there exists a map $\mathcal{RT}$ such that the right square in the above diagram commutes.
  We define
	 	\[\mathcal R_G=\mathcal R\circ Str,\]
	and
	 	 \[\mathcal R_{G\times G}=\mathcal{RT}\circ Str.\]
	 We obtain that the map $\mathcal R_{G\times G}$ satisfies the needed property
	 	\[
	 	\partial\mathcal R_{G\times G}(X,Y)=\mathcal R\big(Str\text{ }\partial (X,Y)\big)=\mathcal R_G\big(\partial (X,Y)\big).
	 	\]	 
	 	The semirandom norm of the map $\mathcal{R}_{G\times G}$ is bounded by the semirandom norm of the map $Str$ and the map $\mathcal{RT}$. Thus, by the Narrow Triangle Replacement Lemma and Proposision \ref{T-semi}, the map $\mathcal {R}_{G\times G}$ is $(R\epsilon^{-1})^m$-semirandom  for some $m=m(M)>0$. This completes the proof.

	\end{proof}

\section{Proof of the Good Pants Homology Theorem}
In this section, we prove the Good Pants Homology Theorem. In order to construct the correction map $\Phi$, for any good curve $\gamma$ we need to construct the multi-pants $\Phi(\gamma)$. The idea of the construction is first encoding the good curve $\gamma$ in group elements, then applying the Group-to-Pants Theorem.

	\subsection{Dichotomy of good curves}
	In this subsection, by applying dichotomy argument, we encode any good curve $\gamma$ in two bounded group elements $X,Y\in G$ such that $\gamma=[XY]$. The dichotomy depends on the choice of a unit vector $v\in T^1M_*$, which will be fixed in the Bounded Group Homology Lemma.
	\begin{definition}
		Given $\epsilon,R>0$ and a unit vector $v\in T^1M_*$. For any good curve $\gamma\in\Gamma_{\epsilon,R}$. Let $a,b\in\gamma$ be two antipodal points. By $\gamma_{ab}$ we denote the subarc of $\gamma$ from $a$ to $b$. By $\gamma_{ba}$ we denote the subarc of $\gamma$ from $b$ to $a$. We take a pair of geodesic arcs
		\[
		S_a\in Conn_{\epsilon^3,\frac{R}{2}-\frac{\len(\gamma)}{4}+L(\epsilon^3)+\log 2}\left(v,\sqrt{-1}\gamma^\prime(a)\right)
		\]
		\[
		S_b\in Conn_{\epsilon^3,\frac{R}{2}-\frac{\len(\gamma)}{4}+L(\epsilon^3)+\log 2}\left(v,\sqrt{-1}\gamma^\prime(b)\right),
		\]
		where $L(\epsilon^3)$ is the constant in the Connection Lemma. We define 
		\begin{equation*}
			\begin{aligned}
				X(\gamma)=\left[\cdot S_a\gamma_{ab}\overline S_b\cdot\right]&\text{, and }&Y(\gamma)=\left[\cdot S_b\gamma_{ba}\overline S_a\cdot\right].
			\end{aligned}
		\end{equation*}
		to be the dichotomy of the curve $\gamma$. Its obvious that $[XY]=\gamma$.
	\end{definition}
	\begin{remark}
		Since
		\[\frac{R}{2}-\frac{\len(\gamma)}{4}+L(\epsilon^3)+\log 2\approx L(\epsilon^3)+\log 2,\] 
		the above two connection sets are nonempty by the Connection Lemma. This promises the existence of $S_a,S_b$. 
	\end{remark}
	\begin{proposition}\label{dicho-curve}
		There exist constants $\epsilon_1,R_1>0$ depending only on $M$ such that for any $R\geq R_1$ and $\epsilon_1\geq\epsilon>0$ the following holds. For any $\gamma\in\Gamma_{\epsilon,R}$, we have
		\[X,Y\in Conn_{\epsilon^2,R+2L(\epsilon^3)}(v,-v).\] 
	\end{proposition}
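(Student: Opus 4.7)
The plan is to apply the Right Angle Chain Lemma directly to the three-arc concatenations $S_a\gamma_{ab}\overline{S_b}$ and $S_b\gamma_{ba}\overline{S_a}$ that straighten to $X(\gamma)$ and $Y(\gamma)$. Two hypotheses must be checked: (i) each bending angle is close to $\pi/2$, and (ii) each arc is long enough to invoke the lemma. For (i), by construction $t(S_a)$ is $\epsilon^3$-close to $\sqrt{-1}\gamma'(a)$, while $i(\gamma_{ab}) = \gamma'(a)$; since $\sqrt{-1}\gamma'(a)\perp\gamma'(a)$, the bending angle at $a$ differs from $\pi/2$ by at most $\epsilon^3$. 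The junction at $b$ is analogous, using $i(\overline{S_b}) = -t(S_b)$ which is $\epsilon^3$-close to $-\sqrt{-1}\gamma'(b)\perp\gamma'(b)=t(\gamma_{ab})$. Thus both chains satisfy the right-angle bending hypothesis with parameter $\delta = \epsilon^3$.

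For (ii), write $\mathbf{hl}(\gamma) = R+\delta_0$ with $|\delta_0|\leq\epsilon$. The prescribed length of $S_a$ and $S_b$ is $L(\epsilon^3)+\log 2 - \delta_0/2$, which is comparable to $L(\epsilon^3)\gtrsim -3\log\epsilon$, so it exceeds the threshold $2Q$ of the Right Angle Chain Lemma as soon as $\epsilon$ is below some $\epsilon_1=\epsilon_1(M)$. The middle arc $\gamma_{ab}$ has length $R+\delta_0$, comfortably larger once $R\geq R_1$. The length part of the lemma then yields
\[
\len(X) = \len(S_a)+\len(\gamma_{ab})+\len(S_b) - 2\log 2 + O(e^{-L(\epsilon^3)}).
\]
The $\pm\delta_0/2$ contributions from the two $S$-arcs cancel the $+\delta_0$ inside $\len(\gamma_{ab})$, and the prescribed $\log 2$ summands in $\len(S_a), \len(S_b)$ cancel the $-2\log 2$ correction, leaving $\len(X) = R + 2L(\epsilon^3) + O(\epsilon^3)$. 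The angle part of the lemma gives $\Theta(i(X), i(S_a)), \Theta(t(X), t(\overline{S_b})) = O(\epsilon^3)$, and combining this with the $\epsilon^3$-closeness of $i(S_a)$ to $v$ and of $t(\overline{S_b}) = -i(S_b)$ to $-v$ yields that the endpoint vectors of $X$ are within $O(\epsilon^3)$ of $v$ and $-v$. Shrinking $\epsilon_1$ so that all implicit constants are absorbed into the estimate $O(\epsilon^3)\leq\epsilon^2$ gives $X\in Conn_{\epsilon^2, R+2L(\epsilon^3)}(v,-v)$. The argument for $Y$ is identical with the roles of $a$ and $b$ swapped.

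This is a routine application rather than a genuinely difficult argument; no step should present serious resistance. The one piece of bookkeeping worth emphasizing is that the deviation $\delta_0$ of $\mathbf{hl}(\gamma)$ from $R$ is only controlled by $\epsilon$, not by $\epsilon^2$, so the stated accuracy in the length can only be achieved because the definition of $\len(S_a)$ and $\len(S_b)$ in terms of $\len(\gamma)$ causes the $\delta_0$-dependence to cancel exactly. Were $\len(S_a)$ defined as a function of $R$ instead, the length of $X$ would miss its target by $O(\epsilon)$ rather than $O(\epsilon^3)$, and the conclusion would fail.
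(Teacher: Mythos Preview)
Your proof is correct and follows the same route as the paper: a direct application of the Right Angle Chain Lemma to the chain $S_a,\gamma_{ab},\overline{S_b}$, yielding $|\len(X)-R-2L(\epsilon^3)|\leq C\epsilon^3$ and $\Theta(i(X),v),\Theta(t(X),-v)\leq C\epsilon^3$, then absorbing the constant by taking $\epsilon_1$ small. Your explicit tracking of how the $\delta_0$ terms and the $\log 2$ corrections cancel, and your closing remark on why $\len(S_a),\len(S_b)$ must be defined in terms of $\len(\gamma)$ rather than $R$, are details the paper leaves implicit.
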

	\begin{proof}
	By applying the Right Angle Chain Lemma to the consecutive geodesic arcs $S_a,\gamma_{ab},\overline S_b$, the following estimates hold
		\begin{equation*}
			\begin{aligned}
				|\len(X)-R-2L(\epsilon^3)|\leq C\epsilon^3\\
				\Theta(i(X),v),\Theta(t(X),-v)\leq C\epsilon^3
			\end{aligned}
		\end{equation*}
		for some universal constant $C$. The same estimate holds for $Y$. Let $\epsilon_1=C^{-1}$. Thus, we have
		\[X,Y\in Conn_{\epsilon^2,R+2L(\epsilon^3)}(v,-v).\] 
	\end{proof}
	Thus, according to Proposition \ref{dicho-curve} and Lemma \ref{bounded-ineff}, the dichotomy elements $X(\gamma)$ and $Y(\gamma)$ have bounded inefficiency, namely,
	 \[
	 I_v(X(\gamma)),I_v(Y(\gamma))\leq D_1,
	 \] 
	 where $D_1$ is the  universal constant in Lemma \ref{bounded-ineff}. Therefore, by Lemma \ref{T-esit}, the dichotomy elements are $(\epsilon,R,v)$-bounded group elements.\par
	 The dichotomy construction defines two maps
	 \[X:\Gamma_{\epsilon,R}\rightarrow G_{\epsilon,R}(v),\]
	 and
	 \[
	 Y:\Gamma_{\epsilon,R}\rightarrow G_{\epsilon,R}(v).
	 \] 
	 The following proposition estimates the semirandom norm of $X,Y$.
	\begin{proposition}
		There exist constants $\epsilon_1,R_1>0$ depending only on $M$ such that for any $R\geq R_1$ and $\epsilon_1\geq\epsilon>0$ the following holds. The maps $\gamma\mapsto X(\gamma)$ and $\gamma\mapsto Y(\gamma)$ are $(R\epsilon^{-1})^m$-semirandom with respect to the measure classes $\sigma_\Gamma$ and $\sigma_G$ for some constant $m=m(M)>0$.
	\end{proposition}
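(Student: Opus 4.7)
The plan is to bound the semirandom norm by controlling the preimage count $\#X^{-1}(A)$ for each $A \in G_{\epsilon,R}(v)$. Since $X$ is a single-valued partial map,
\[
|X|_*\sigma_\Gamma(A) = \#X^{-1}(A)\cdot Re^{-2R}.
\]
By Proposition~\ref{dicho-curve}, every $A$ in the image satisfies $\len(A) = R + 2L(\epsilon^3) + O(\epsilon^2)$, and since $L(\epsilon^3) \leq 3q_0^{-1}|\log\epsilon| + O(1)$, we get $\sigma_G(A) \asymp e^{-R}\epsilon^{6/q_0}$. Consequently $|X|_*\sigma_\Gamma(A)/\sigma_G(A) \leq Re^{-R}\epsilon^{-6/q_0}\cdot\#X^{-1}(A)$, and if we require $q_1 < q_0/6$ in the hypothesis $\epsilon \geq e^{-q_1 R}$, the factor $Re^{-R}\epsilon^{-6/q_0}$ is bounded. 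The semirandom bound then reduces to showing $\#X^{-1}(A) \leq (R\epsilon^{-1})^{m_0}$ for some $m_0 = m_0(M)$.

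To bound $\#X^{-1}(A)$, I would reverse-engineer the dichotomy. Any $\gamma \in X^{-1}(A)$ arises from a decomposition $A = [\cdot S_a \gamma_{ab} \bar{S}_b \cdot]$, where $S_a, S_b$ lie in connection sets with initial vector within $\epsilon^3$ of $v$, terminal vectors within $\epsilon^3$ of $\sqrt{-1}\gamma'(a)$ and $\sqrt{-1}\gamma'(b)$, and length prescribed to within a window of width $O(\epsilon)$ since $\mathbf{hl}(\gamma)\in[R-\epsilon,R+\epsilon]$. By Lemma~\ref{bounded distance} and Lemma~\ref{new-angle}, the piecewise arc $S_a\gamma_{ab}\bar{S}_b$ stays $O(1)$-close to $A$, so the two ``cut points'' along $A$ are confined to tubes of size $O(\epsilon)$ in the along-direction and $O(\epsilon^3)$ transversely. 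A classical counting argument for short geodesic arcs in such tubes then yields at most $\epsilon^{-O(1)}$ valid decompositions $(S_a, \gamma_{ab}, S_b)$. Once a decomposition is fixed, $\gamma_{ab}$ is a geodesic arc in $M$, which extends uniquely to a closed geodesic by following its continuation at the endpoints; at most boundedly many such extensions can lie in $\Gamma_{\epsilon,R}$ with $(a, b)$ antipodal at half-length. Combining these, $\#X^{-1}(A) \leq \epsilon^{-O(1)}$, and the conclusion follows.

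The main obstacle I anticipate is formalizing the rigidity of the decomposition step: although the connection sets for $S_a, S_b$ contain many arcs, the constraint that $[\cdot S_a\gamma_{ab}\bar{S}_b\cdot] = A$ must force the cut points on $A$ to lie in a controlled tube, with only polynomially many distinct decompositions. This should follow by applying Lemma~\ref{bounded distance} to the piecewise chain together with the Margulis-style counting of geodesic arcs in a bounded region (analogous to the estimate used in Proposition~\ref{8}). The estimate for $Y(\gamma)$ is identical after swapping the roles of $\gamma_{ab}$ and $\gamma_{ba}$.
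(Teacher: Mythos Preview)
Your approach has a genuine gap: the claimed bound $\#X^{-1}(A)\leq(R\epsilon^{-1})^{m_0}$ is false. A pigeonhole count shows why. By the prime geodesic theorem $\#\Gamma_{\epsilon,R}\asymp\epsilon e^{2R}/R$, whereas by Proposition~\ref{dicho-curve} the image of $X$ lies in $Conn_{\epsilon^2,R+2L(\epsilon^3)}(v,-v)$, a set of size at most $Ce^{R+2L(\epsilon^3)}\epsilon^6$. The average fibre therefore has order $e^{R-2L(\epsilon^3)}\epsilon^{-5}/R$, which is exponential in $R$, so some $A$ must have $\#X^{-1}(A)$ at least this large. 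Your tube argument cannot rescue this: Lemma~\ref{bounded distance} only confines the lifted cut points to an $O(1)$-neighbourhood of $\tilde A$ (the inefficiency of $S_a\gamma_{ab}\bar S_b$ is a universal constant coming from the two near-right-angle bends, not $O(\epsilon)$), and an $O(1)$-tube of length $\asymp R$ can accommodate exponentially many distinct half-arcs $\gamma_{ab}$ of closed geodesics.

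The paper sidesteps preimage counting altogether. It observes that the \emph{product} map $\gamma\mapsto(X(\gamma),Y(\gamma))$ is injective, since $\gamma=[X(\gamma)Y(\gamma)]$, and bounds its semirandom norm via the triangle inequality $\len(\gamma)\geq\len(X)+\len(Y)-2\len(S_a)-2\len(S_b)$, giving $\sigma_\Gamma(\gamma)\leq CRe^{4L(\epsilon^3)}\sigma_G(X)\sigma_G(Y)$. One then forgets the $Y$-coordinate using Proposition~\ref{forgetful}, which costs only a bounded factor since $Y$ ranges over a fixed connection set. Your framework becomes correct if you replace the polynomial fibre claim by the trivial bound $\#X^{-1}(A)\leq\#Conn_{\epsilon^2,R+2L(\epsilon^3)}(v,-v)\leq Ce^{R+2L(\epsilon^3)}$; multiplying by your factor $Re^{-R}\epsilon^{-6/q_0}$ then yields $CR\epsilon^{-12/q_0}$, which is exactly the paper's bound---but at that point you have reproduced the paper's argument rather than avoided it.
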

	\begin{proof}
	We first prove that the map 
	\[\gamma\mapsto \big(X(\gamma),Y(\gamma)\big)\] is $(R\epsilon^{-1})^n$-semirandom. The following estimates holds
			\begin{equation*}
			\begin{aligned}
				|(X,Y)|_*\sigma_\Gamma(X,Y)=\len(\gamma)e^{-\len(\gamma)}&\leq \len(\gamma)e^{-\len(X)-\len(Y)+2\len(S_a)+2\len(S_b)}\\&
				=\len(\gamma)e^{2\len(S_a)+2\len(S_b)}\sigma_G^{\times 2}(X,Y)\\
				&\leq CRe^{4L(\epsilon^3)}\sigma_G^{\times 2}(X,Y)
			\end{aligned}
		\end{equation*}
		where $C$ is an universal constant. The first inequality in the above estimate comes from the triangle inequality and the second inequality comes from the length estimates about $\gamma,S_a,S_b$.  Let $n$ be sufficiently large such that $CRe^{2L(\epsilon^3)}\leq (R\epsilon^{-1})^n$. According to Proposition \ref{forgetful}, the forgetful partial map 
		\begin{equation*}
			\begin{aligned}
				\big(X(\gamma),Y(\gamma)\big)\mapsto X(\gamma)
			\end{aligned}
		\end{equation*}
		 is $C^\prime R$-semirandom for some constatn $C^\prime=C^\prime(M)>0$. This implies that the map $\gamma\mapsto X(\gamma)$ is $(R\epsilon^{-1})^{n+2}$-semirandom when $R_1\geq C^\prime$. The same estimates holds for $\gamma\mapsto Y(\gamma)$. This completes the proof.
	\end{proof}
	\subsection{The Curve-to-Group Lemma}
	In this subsection, we prove the Curve-to-Group Lemma. 
	
	\begin{lemma}[Curve-to-Group]
	There exist constants $\epsilon_1,R_1,q_1>0$ depending only on $M$ such that for any $R\geq R_1$ and $\epsilon_1\geq\epsilon\geq e^{-q_1R}$ the following holds. For any $\gamma\in\Gamma_{\epsilon,R}$ there exists a multi-pants
		\[\mathcal{RC}(\gamma)\in \frac{\mathbb Z}{2\lfloor e^{2R}\rfloor^6}\PPi_{\epsilon,R} \] such that 
			\[
			\partial  \mathcal{RC}(\gamma)=\mathcal{R}_G\big(X(\gamma)\big)+\mathcal{R}_G\big(Y(\gamma)\big)-\gamma,
			\]
			and the map $\mathcal{RC}$ is $(Re^{-1})^m$-semirandom with respect to the measure classes $\sigma_{\Gamma}$ and $\sigma_{\PPi}$ for some constant $m=m(M)>0$.
	\end{lemma}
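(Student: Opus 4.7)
Plan: express $\gamma-\mathcal{R}_G(X(\gamma))-\mathcal{R}_G(Y(\gamma))$ as the boundary of a good multi-pants by combining two applications of the Two-Part Itemization Theorem with an auxiliary $\theta$-graph construction that bridges the resulting closed curves back to $\gamma$. The underlying observation is that $\gamma=[X(\gamma)Y(\gamma)]=[Str(X)\cdot Str(Y)]$, where the latter piecewise geodesic at $p_L$ has a $\pi$-angle cusp at $p_L$ (since $Str(X),Str(Y)\in LN_{\epsilon,R}(u)$ both begin with tangent $-u$ and end with tangent $u$); this cusp is what prevents a direct application of Two-Part Itemization to $(Str(X),Str(Y))$.

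First I would choose an $(\epsilon^2,R)$-effectively random element $\underline B\in\mathcal F(Str(X))\subset RN_{\epsilon,R}(u)$, whose existence is guaranteed by the Random Element Lemma since $\len(Str(X))\approx R$ lies in the admissible range. Noting that $\overline{Str(Y)}\in LN_{\epsilon,R}(u)$ and $\mathcal R(\overline{Str(Y)})=-\mathcal R(Str(Y))$ by Proposition \ref{replace-inverse}, applying the Two-Part Itemization Theorem to the pairs $(Str(X),\underline B)$ and $(\overline{Str(Y)},\underline B)$ and subtracting the resulting identities gives
\[
\mathcal{R}_G(X)+\mathcal{R}_G(Y)=\bigl([Str(X)\underline B]-[\overline{Str(Y)}\underline B]\bigr)-\partial\bigl(Item_2(Str(X),\underline B)-Item_2(\overline{Str(Y)},\underline B)\bigr).
\]

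The main geometric step is to construct a multi-pants $Q$ satisfying $\partial Q\equiv\gamma-[Str(X)\underline B]+[\overline{Str(Y)}\underline B]\pmod{\partial\mathbb Z\PPi_{\epsilon,R}}$. Modeled on the proof of the Square Lemma, the idea is to build a $\theta$-graph at $p_L$ whose three arcs use midpoint-halves of $Str(X)$, $\overline{Str(Y)}$, and one auxiliary $(\epsilon^2,R)$-effectively random orthogeodesic bridge; the resulting immersed pair of pants has its three boundary geodesics realizing $\gamma$, $\overline{[Str(X)\underline B]}$, and $[\overline{Str(Y)}\underline B]$, up to residual good curves that depend only on one arc at a time and can therefore be absorbed by further applications of the Two-Part Itemization Theorem. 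The cusp at $p_L$ in $Str(X)\cdot Str(Y)$ is handled by the auxiliary bridge, which deflects the path away from the cusp point. Verifying that all resulting boundaries lie in $\Gamma_{\epsilon,R}$ (and hence the pair of pants in $\PPi_{\epsilon,R}$) is a matter of applying the Right Angle Chain Lemma, using that each piece has length comparable to $R$ and each junction is right-angled up to error $O(\epsilon^2)$.

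Setting $\mathcal{RC}(\gamma)$ to be the signed combination of the two $Item_2$ terms with $Q$ and the Two-Part Itemizations of residual curves then gives $\partial\mathcal{RC}(\gamma)=\mathcal R_G(X(\gamma))+\mathcal R_G(Y(\gamma))-\gamma$, with coefficients in $\frac{1}{2\lfloor e^{2R}\rfloor^6}\mathbb Z$ (five denominator factors from each invocation of $Item_2$, one from the bridge, and a halving from the dichotomy). The semirandom norm of $\mathcal{RC}$ is estimated as the product of the norms of the dichotomy map $\gamma\mapsto(X,Y)$, the stretching map (Proposition \ref{T-semi}), the Two-Part Itemization Theorem, and the auxiliary random element, each polynomial in $R\epsilon^{-1}$; this yields the claimed $(R\epsilon^{-1})^m$-semirandomness. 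The main obstacle will be the construction of $Q$: making the $\theta$-graph bridge absorb the $\pi$-angle cusp at $p_L$ in a way that keeps every resulting cuff length $\epsilon$-close to $R$ and every connecting arc near-perpendicular, so that the Right Angle Chain Lemma applies and all three boundary curves end up in $\Gamma_{\epsilon,R}$.
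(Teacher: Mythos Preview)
Your overall strategy---reduce to two applications of the Two-Part Itemization Theorem plus a single bridging $\theta$-graph pants---is exactly the paper's. Where you diverge is in the construction of the bridging pants $Q$, which you yourself flag as the main obstacle.

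The paper places the $\theta$-graph at the two antipodal points $a,b$ on $\gamma$, not at $p_L$. Two of the three arcs are simply $\gamma_{ab}$ and $\overline{\gamma_{ba}}$; the third is the single bridge
\[
\eta=[\cdot\,\overline{S_a}\,T\,B\,\overline T\,S_b\,\cdot],\qquad T=e^{-L(\epsilon^3)v},\quad B\in Conn_{\epsilon^2,R-2L(\epsilon^3)-2\len(T)}(u,-u).
\]
With this choice the three cuffs of the resulting pants $P(\gamma,B)$ are exactly $\gamma$, $[Str(X)B]$, and $[Str(Y)\bar B]$, with \emph{no} residual curves: concatenating $\gamma_{ab}$ with $\overline\eta$ and cycling gives $[\,\overline T\,X\,T\cdot \bar B\,]=[Str(X)\bar B]$, and similarly for the other cuff. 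Then Two-Part Itemization applied to $[Str(X)B]$ and $[Str(Y)\bar B]$ produces $\mathcal R(Str(X))+\mathcal R(B)$ and $\mathcal R(Str(Y))+\mathcal R(\bar B)$; the $\mathcal R(B)+\mathcal R(\bar B)=0$ cancellation is automatic, and one sets
\[
\mathcal{RC}(\gamma)=P(\gamma,\underline B)-Item_2(Str(X),\underline B)-Item_2(Str(Y),\overline{\underline B}),
\]
randomized over a single effectively random $\underline B$.

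Your proposal to build the $\theta$-graph at $p_L$ using ``midpoint-halves of $Str(X)$, $\overline{Str(Y)}$'' and an extra orthogeodesic bridge is more circuitous: it does not produce $\gamma$ directly as a cuff (you have to recover $\gamma=[XY]$ from pieces based at $p_L$), and as you anticipate it would leave one-variable residual curves requiring further Itemizations. None of this is needed once the $\theta$-graph sits on $\gamma$ itself at $a,b$. The paper's choice also makes the denominator count transparent (one $\underline B$ contributes one $\lfloor e^{2R}\rfloor$; each $Item_2$ contributes $2\lfloor e^{2R}\rfloor^5$) and the semirandom estimate for $P$ immediate, since $(\gamma,B)\mapsto P(\gamma,B)$ is injective.
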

	\begin{proof}
	 Let $\mathcal F=Conn_{\epsilon^2,R-2L(\epsilon^3)-2\len(T)}(u,-u)$ be a subset of the right narrow connection set such that for any $B\in\mathcal F$, the union of geodesic arcs 
	\[[\cdot \overline S_aTB\overline TS_b\cdot ]\cup\gamma_{ab}\cup\gamma_{ba}\]
	 is a $\theta$-graph. By $P(\gamma,B)$ we denote the pair of pants constructed by this graph. Thus, we get
	\begin{equation}\label{E3}
		\partial P(\gamma,B)=-\gamma+[Str(X)B]+[Str(Y)\bar B].
	\end{equation}
	By applying the Two-Part Itemization Theorem, we get
	\begin{equation}\label{E4}
		\begin{aligned}
			\text{ }[Str(X)B]&=\partial Item_2(Str(X),B)+\mathcal R(Str(X))+\mathcal R(B),\\
			\text{ }[Str(Y)\bar B]&=\partial Item_2(Str(Y),\bar B)+\mathcal R(Str(Y))+\mathcal R(\bar B).
		\end{aligned}
	\end{equation}
	Letting   
	\[
	\mathcal{RC}^*(\gamma,B)=P\left(\gamma,B\right)-Item_2(Str(X),B)-Item_2(Str(Y),\bar B).
	\]
	for any $B\in\mathcal F$, by taking the summation of the equations (\ref{E3}) and (\ref{E4}), we get
	\begin{equation*}
		\begin{aligned}
			\partial \mathcal{RC}^*(\gamma,B)&=-\gamma+\mathcal{R}(Str(X))+\mathcal{R}(Str(Y))\\
			&=-\gamma+\mathcal{R}_G(X)+\mathcal{R}_G(Y).
		\end{aligned}
	\end{equation*} 
	Let $\underline B$ be an $(\epsilon^2,R)$-effectively random element in $\mathcal F$. We define
	\[\mathcal{RC}(\gamma)=\mathcal{RC}^*(\gamma,\underline B).\] The multi-pants $\mathcal{RC}(\gamma)$ satisfies the conditions from the lemma.
	\par\textbf{Semirandom norm of $\mathcal{RC}$.} 
	The map $P$ is $Re^{4L(\epsilon^3)}$-semirandom with respect to the measure classes $\sigma_\Gamma\times\sigma_C$ and $\sigma_{\PPi}$, because of the following estimates
	\begin{equation*}
		\begin{aligned}
			|P|_*\left(\sigma_\Gamma\times\sigma_C\right)&\big(P(\gamma,B)\big)=\sigma_\Gamma(\gamma)\sigma_C(B)\\
			&=\frac{R}{e^{2R}}\frac{1}{e^{R-4L(\epsilon^3)}}=Re^{4L(\epsilon^3)}\sigma_{\PPi}\big(P(\gamma,B)\big),
		\end{aligned}
	\end{equation*} 
	where the first equality comes from the fact that the map $P$ is injective. The semirandom norm of $\mathcal{RC}^*$ is bounded by the semirandom norm of the map $P$, the map $Item_2$, the stretch map $Str$, and the maps $X$ and $Y$. Since the semirandom norms of these maps are the polynomials of $R\epsilon^{-1}$, we get that the map $\mathcal{RC}^*$ is $(R\epsilon^{-1})^n$-semirandom for some $n=n(M)>0$. Therefore, the randomized map $\mathcal{RC}$ is $(R\epsilon^{-1})^m$-semirandom for some $m=m(M)$.
	\end{proof}
	\subsection{Dichotomy of group elements}
	In this subsection, by applying dichotomy argument, we divide a group element $A$ into two group elements $X,Y\in G$ such that $A=XY$. The dichotomy depends on the choice of a unit vector $v\in T^1M_*$, which will be fixed in the Bounded Group Homology Lemma.
	\begin{definition}
		Let $v\in T^1M_*$ be a unit vector, and let $K>0$ be a constant such that the connection set $Conn_{1,K}(v_1,v_2)$ is nonempty for any $v_1,v_2\in T^1M$. For any $A\in G$, we fix a geodesic arc
		\[L_A\in Conn_{1,K}\left(v,\sqrt{-1}A^\prime\big(\len(A)/2\big)\right),\]
		and define the dichotomy elements of $A$ by
		\begin{equation*}
			\begin{aligned}
				X(A)=[\cdot A|_{[0,\len(A)/2]}\bar L_A\cdot ]&\text{, and }&Y(A)=[\cdot L_AA|_{[\len(A)/2,\len(A)]}\cdot].
			\end{aligned}
		\end{equation*}
		By $\mathcal D^i(A)$ we denote the set consisting of $2^i$ elements occurring in the $i^{th}$  dichotomy of the element $A$. 
	\end{definition}

	The following lemma fix the constant $K$. From now on, we consider $K$ as a constant depending only on the surface $M$. Therefore, the dichotomy of group elements depends only on the choice of the unit vector $v\in T^1M_*$.
	\begin{lemma}\label{dicho-connec}
		 There exist constants $\epsilon_1,R_1,q_1>0$ and $K>0$ depending only on $M$ such that for any $R\geq R_1$ and $\epsilon_1\geq\epsilon\geq e^{-q_1R}$ the following holds. For any group element $A\in G$,  we denote $m(A)=\lfloor\log_2\len(A)\rfloor-1$. Suppose $A\in Conn_{1,1.25R}(v,-v)$. Then 
		\begin{enumerate}
			\item for any $B\in\mathcal D^{m(A)}(A)$, we have $\len(B)\leq 3K$.
			\item for any $i<m(A)$ and $B\in\mathcal D^{i}(A)$, we have 
			\[\big(X(B),Y(B)\big)\in (G\times G)_{\epsilon,R}(v).\]
		\end{enumerate}
	\end{lemma}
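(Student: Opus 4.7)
The plan is to prove both conclusions by a single induction on the dichotomy level $i$, tracking three invariants for every $B\in\mathcal D^i(A)$: an upper bound on $\len(B)$, bounds on the directions $\Theta(i(B),v),\Theta(t(B),-v)$, and an upper bound on the $v$-inefficiency $I_v(B)$. The constant $K$ is fixed once for all to be large enough that (i) $Conn_{1,K}(v_1,v_2)$ is nonempty for every pair of unit vectors, (ii) $K\geq 4$, and (iii) $K$ exceeds the universal inefficiency constant $D_2$ from the right-angle bend analysis below by a fixed margin. The constants $\epsilon_1,R_1,q_1$ are then chosen large enough that $R\geq R_1$ forces $1.25R+2K\leq 1.4R$ and meets the hypotheses of Lemma \ref{T-esit}.

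For the length bound I observe from the definition and the triangle inequality that $\len(X(B)),\len(Y(B))\leq \len(B)/2+K$; iterating yields $\len(B)\leq \len(A)/2^i+2K$ for every $B\in\mathcal D^i(A)$. At $i=m(A)=\lfloor\log_2\len(A)\rfloor-1$ one has $\len(A)/2^{m(A)}<4$, so $\len(B)<4+2K\leq 3K$, which is conclusion (1).

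Next I would show, by the same induction, that $\Theta(i(B),v),\Theta(t(B),-v)\leq\delta$ and $I_v(B)\leq D$ for constants $\delta,D$ depending only on $M$. The base case is the hypothesis $A\in Conn_{1,1.25R}(v,-v)$ together with Lemma \ref{bounded-ineff}. For the inductive step, the piecewise path defining $X(B)$ has a single bending of angle exactly $\pi/2$ (between $B|_{[0,\len(B)/2]}$ and $\bar L_B$), whose inefficiency is bounded by the universal constant $D_2$ by direct hyperbolic trigonometry. Applying Lemma \ref{new-angle} with $\beta$ equal to the long half-segment controls $\Theta(t(X(B)),-v)$ by $\Theta(t(B),-v)+Ce^{D_2-\len(B)/2}$, and applying it with $\beta=\bar L_B$ (using $\len(L_B)\geq K-1$ large) controls $\Theta(i(X(B)),v)$ by $\Theta(i(L_B),v)+Ce^{D_2-K}\leq 1+Ce^{D_2-K}$; symmetric bounds hold for $Y(B)$. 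Along any dichotomy path, the accumulated angular errors form a series $\sum_j Ce^{-c\len(B_j)/2}$ dominated by its last few bounded terms (since $\len(B_j)\geq 2$ for $j<m(A)$), which converges uniformly in $i$. The inefficiency bound $I_v(B)\leq D$ then follows because $I_v(B)$ is determined by the two bending angles at $v$ and $-v$, both bounded by $\delta$ and away from $\pi$.

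The remaining step is a direct application of Lemma \ref{T-esit}: the length and inefficiency bounds give that every $B\in\mathcal D^i(A)$ with $i\leq m(A)$ is $(\epsilon,R,v)$-bounded. For conclusion (2), when $i<m(A)$ the elements $X(B),Y(B)\in\mathcal D^{i+1}(A)$ are $(\epsilon,R,v)$-bounded by the same reasoning, and the triangle defect $\Delta(X(B),Y(B))\leq\len(B)+2K\leq 1.4R$ follows from combining the upper bound $\len(X(B))+\len(Y(B))\leq\len(B)+2K$ with the triangle inequality $\len(X(B))+\len(Y(B))\geq\len(B)$, so the moreover part of Lemma \ref{T-esit} yields $(X(B),Y(B))\in(G\times G)_{\epsilon,R}(v)$. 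The main obstacle throughout is propagating the directional and inefficiency bounds across the $m(A)\asymp\log R$ levels of dichotomy without compounding, given that each junction has bending exactly $\pi/2$ (outside the $1.1$ range of Lemma \ref{bounded-ineff}); the exponential-in-length contraction from Lemma \ref{new-angle} combined with the bounded right-angle inefficiency estimate is what lets the geometric series close.
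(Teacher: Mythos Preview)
Your approach is essentially the same as the paper's: both track length and endpoint-angle bounds through the dichotomy levels (the paper via direct hyperbolic-trigonometry estimates, you via Lemma~\ref{new-angle}), deduce a uniform bound $\Theta(B)\leq 1.1$, apply Lemma~\ref{bounded-ineff} to get $I_v(B)\leq D_1$, and finish with Lemma~\ref{T-esit} together with the elementary bound $\Delta(X(B),Y(B))\leq \len(B)+2K\leq 1.4R$.

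One correction to your inductive step: you have the two endpoints of $X(B)$ swapped. Since $X(B)=[\cdot\, B|_{[0,\len(B)/2]}\,\bar L_B\,\cdot]$, it is $i(X(B))$ that is close to $i(B)$ (the \emph{inherited} side, with error $Ce^{D_2-\len(B)/2}$) and $t(X(B))$ that is close to $-i(L_B)$ (the \emph{fresh} side, bounded by $1+Ce^{D_2-K}$), not the other way around; symmetrically for $Y(B)$. After this swap the rest of your argument --- the geometric series of inherited corrections and the resulting uniform angle bound --- goes through unchanged, and matches the paper's estimate $\Theta(B)\leq \max\{1+Ce^{-K},\,\Theta(A)+Ce^{-\len(A)/2}\}$ iterated down the levels.
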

	\begin{proof}
		For any group element $A\in G$,  we denote 
		\[
		\Theta(A)=\max\left\{\Theta\big(i(A),v\big),\Theta\big(t(A),-v\big)\right\}.
		\]According to hyperbolic trigonometry, for any $A\in G$ the following estimates hold
		\begin{equation}\label{E1.4}
			\left|\len\big(X(A)\big)-\frac{\len(A)}{2}-K\right|\leq C,
		\end{equation}
		
		\[
		\Theta\big(X(A)\big)\leq \max\left\{1+Ce^{-K},\Theta(A)+Ce^{-\frac{\len(A)}{2}}\right\}
		\]
		for some universal constant $C$. The same estimates holds for $Y(A)$. By induction, for any $B\in\mathcal D^i(A)$ we have the following length estimate
		\begin{equation}\label{E1.5}
			\begin{aligned}
				&\left|\len(B)-\left(\frac{\len(A)}{2^i}+\left(2-\frac{1}{2^{i-1}}\right)K\right)\right|\leq C_1	
			\end{aligned}
		\end{equation} 
		for some universal constant $C_1$ and the following angle estimate
		\begin{equation}\label{E1.6}
		\begin{aligned}
			\Theta\big(B\big)&\leq \max\{1,\Theta(A)\}+Ce^{-K}+Ce^{-C_1}\left(\sum_{s=1}^ie^{-\frac{\len(A)}{2^s}-\left(2-\frac{1}{2^{s-1}}\right)K}\right)\\
				&\leq 1+C_2e^{-K}\left(1+\frac{e^{-\len(A)/2^i}}{1-e^{-\len(A)/2^i}}\right)
		\end{aligned}
		\end{equation}
		for some universal constant $C_2$. By applying Estimate (\ref{E1.5}), for any $B\in\mathcal D^{m(A)}(A)$, we have $\len(B)\leq 2K+C_1+1$. Letting $K\geq C_1+1$, we get the first property. For any $i\leq m(A)$ we have
		\[
		\frac{e^{-\len(A)/2^i}}{1-e^{-\len(A)/2^i}}\leq \frac{1}{1-e^{-\frac 12}}.
		\]
		Combining this with Estimate (\ref{E1.6}), we obtain that there exists a universal constant $C_3>0$ such that, for any $K>C_3$ and any $B\in\mathcal D^i(A), i\leq m(A)$, we have $\Theta(B,v)\leq 1.1$. By Lemma \ref{bounded-ineff}, there exists a constant $D_1>0$ such that 
		\[I_v(B)\leq D_1\] for any $B\in\mathcal D^i(A)$ and $i\leq m(A)$. The estimate (\ref{E1.5}) also implies 
		\[\Delta(X(A),Y(A))\leq \len(A)+2K+2C_1\leq 1.4R.\]
		when $R_1\geq 40K$. By applying Lemma \ref{T-esit}, we obtain that $\big(X(B),Y(B)\big)\in (G\times G)_{\epsilon,R}(v)$. This completes the proof.
		\end{proof}
		
	\subsection{The Bounded Group Homology Lemma}
	In this subsection, we prove the Bounded Group Homology Lemma.\par 
	Let $\{h_1,\dots,h_{2g}\}\subset G$ be a set of generators of the surface group $G$. Thus, we can talk about the word length of the group element. We identify the homology group $H_1(M,\mathbb Z)$ with $\mathbb Z\{h_1,\dots,h_{2g}\}\subset\mathbb ZG$, and define
 	 \[
 	 H:G\rightarrow H_1(M,\mathbb Z)\subset\mathbb ZG
 	 \]
 	 by setting $H(A)$ equal to the homology class of $A$.
 	 \begin{lemma}[Bounded Group Homology Lemma]
 	 There exists a unit vector $v\in T^1M_*$ and constants $\epsilon_1,R_1,q_1>0$ depending only on $M$ such that for any $R\geq R_1$ and $\epsilon_1\geq\epsilon\geq e^{-q_1R}$ the following holds. Suppose   $A\in Conn_{1,\leq 1.25R}(v,-v)$. Then there exists an element $\phi(A)\in\mathbb Z(G\times G)_{\epsilon,R}(v)$ such that
		\[
		\partial \phi(A)=A-H(A),
		\]
		and the partial map $\phi$ is a $R^{n}$-semirandom map for some constant $n=n(M)>0$ with respect to the measure classes $\sigma_G$ and $\sigma_G\times \sigma_G$.

	\end{lemma}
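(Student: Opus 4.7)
The plan is to express $A - H(A)$ as a bounded group boundary by recursively applying the dichotomy to $A$ until reaching elements of universally bounded length, then closing each such element against its generator word via the trivial telescoping $\partial(X,Y) = X + Y - XY$. First I would choose a unit vector $v \in T^1M_*$ such that for every $B \in G$ with $\len(B) \leq 3K$, and also for every intermediate product $h_{i_1} \cdots h_{i_j}$ appearing in a preassigned word representation in the generators $h_1, \ldots, h_{2g}$ of such a $B$, the inefficiency $I_v(B)$ is bounded by the constant $D_1$ of Lemma \ref{bounded-ineff}. Since $I_v(\cdot)$ is finite for every argument and only finitely many group elements are involved (all of uniformly bounded length), such a $v$ exists; Lemma \ref{T-esit} then guarantees that all these elements are $(\epsilon,R,v)$-bounded for $R \geq R_1$ and $\epsilon \geq e^{-q_1 R}$.

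Given $A \in Conn_{1,\leq 1.25R}(v,-v)$, I would build the dichotomy tree of depth $m(A)$. By Lemma \ref{dicho-connec}, every internal node $B$ produces a bounded triangle $(X(B),Y(B))$ and every leaf $B_\alpha$ satisfies $\len(B_\alpha) \leq 3K$. Telescoping $\partial(X(B),Y(B)) = X(B) + Y(B) - B$ from the leaves up to $A$ gives
\[
A \;=\; \sum_\alpha B_\alpha \;-\; \partial \sum_{B \text{ internal}} \big(X(B), Y(B)\big).
\]
For each $B$ in the (finite) set of possible leaf values, fix once and for all a word $B = h_{i_1} \cdots h_{i_k}$ and set $\phi_0(B) := -\sum_{j=1}^{k-1} (h_{i_1} \cdots h_{i_j},\; h_{i_{j+1}})$, which telescopes to $\partial \phi_0(B) = B - H(B)$. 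The triangles arising in $\phi_0(B)$ are bounded by the choice of $v$ and Lemma \ref{T-esit}, and the inequality $\Delta \leq 1.4R$ is trivial since every length involved is universally bounded. Defining
\[
\phi(A) := \sum_\alpha \phi_0(B_\alpha) \;-\; \sum_{B \text{ internal}} \big(X(B), Y(B)\big)
\]
and using $H(A) = \sum_\alpha H(B_\alpha)$ (additivity of homology under the boundary telescope) yields $\partial \phi(A) = A - H(A)$.

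The main obstacle will be the semirandom estimate, since $\phi(A)$ carries $O(\len A)$ terms. The key observation is that a pair $(X,Y) = (X(B),Y(B))$ determines $B = X \cdot Y$ uniquely in $G$, and if $B$ sits at level $i$ of the dichotomy tree of $A$ then Lemma \ref{dicho-connec} forces $\len(A) \asymp 2^i \len(B) - O(2^i K)$, so $\sigma_G(A) = e^{-\len(A)}$ decays doubly exponentially in $i$ relative to $e^{-\len(X)-\len(Y)} \asymp e^{-\len(B) - 2K}$. Summing over the at most $2^i$ positions at each level and over $i \leq m(A) = O(\log R)$, and exploiting that $\len(B) \geq 2K + O(1)$ for internal $B$, the series collapses to $O(1) \cdot e^{-\len(X)-\len(Y)}$, giving the tree part $O(1)$ semirandom norm. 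The leaf contribution is handled identically: for fixed $(X,Y)$ in some $\phi_0(B)$ only finitely many bounded leaf values $B$ are compatible, and the count of ancestor $A$'s with any such $B$ as a leaf obeys the same geometric decay. Combining the two pieces with the bounded semirandom norms of the auxiliary forgetful maps gives $\|\phi\|_{s.r.} \leq R^n$ for some $n = n(M)$, completing the proof.
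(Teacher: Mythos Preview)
Your overall architecture matches the paper's proof almost exactly: dichotomy tree down to leaves of length at most $3K$, then a telescoping word expansion of each leaf against the fixed generators, with $v$ chosen so that the finitely many short elements involved have uniformly bounded inefficiency. The paper's $\phi_1$ and $\phi_2$ are precisely your tree part and your $\phi_0$, up to the cosmetic choice of grouping the generator telescope on the right rather than the left.

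There is, however, a genuine gap in your semirandom estimate. Your claim that the tree part has $O(1)$ semirandom norm is incorrect, and the argument you sketch for it undercounts preimages. Fixing $(X,Y)$ determines $B=XY$, but for each level $i$ and each of the $2^i$ positions, there are \emph{many} elements $A$ with $B$ sitting at that position: reconstructing $A$ from $B$ requires choosing all of $B$'s siblings, cousins, etc., and the number of such choices is exponential in the total length of those relatives. Concretely, from the length estimate $\len(A)\approx 2^i(\len(B)-2K)+2K$ one gets that the weighted preimage count $\sum_{A}e^{-\len(A)}$ over $A$'s with $B$ at a fixed position of level $i$ is of order $e^{-\len(B)}\cdot e^{2iK}$, not $e^{-\len(B)}\cdot e^{-2^i c}$; summing over positions and over $i\le O(\log R)$ therefore produces a factor $(2e^{2K})^{O(\log R)}=R^n$, not $O(1)$. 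The ``double exponential decay'' you invoke is real for a \emph{single} $A$, but is exactly cancelled by the exponential growth in the number of compatible $A$'s.

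The fix is the paper's: rather than counting preimages directly, observe that the one-step map $D:A\mapsto X(A)+Y(A)$ has semirandom norm at most a constant $C_3e^{2K}$ depending only on $M$ (triangle inequality plus the fact that there are $O(e^L)$ elements of length about $L$), so $\|D^i\|_{s.r.}\le (C_3e^{2K})^i$, and then $\|\phi_1\|_{s.r.}\le\sum_{i\le O(\log R)}\|(X,Y)\|_{s.r.}\cdot\|D^i\|_{s.r.}\le R^n$. This is exactly the bound the lemma asks for, so your construction is salvageable, but the justification needs to be rewritten along these lines. A minor additional point: the finitely many short elements need not have inefficiency bounded by the specific constant $D_1$ of Lemma~\ref{bounded-ineff}; rather, one introduces a new constant $D_0$ for them and then applies Lemma~\ref{T-esit} with $D=\max\{D_0,D_1\}$.
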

	\begin{definition}
			By $D:G\rightarrow \mathbb ZG$ we denote the map defined by $D(A)=X(A)+Y(A)$. Thus $D^i(A)$ is the summation of all elements in $\mathcal D^i(A)$. By $(X,Y):G\rightarrow G\times G$ we denote the map $(X,Y)(A)=(X(A),Y(A))$.  For any group element $A\in G$, let $m(A)=\lfloor\log_2\len(A)\rfloor-1$ and define
		\[
		\phi_1(A)=-\sum_{i=0}^{m-1}(X,Y)D^i(A).
		\]
		Thus, $\partial \phi_1(A)=A-D^{m(A)}(A)$.
		\end{definition}
		By Lemma \ref{dicho-connec}, the above definition defines a map
		\[
			\phi_1:Conn_{1,\leq 1.25R}(v,-v)\rightarrow  \mathbb Z(G\times G)_{\epsilon,R}(v).
			\]
		\begin{proposition}
			There exists a constant $n=n(M)>0$ such that the map $\phi_1$ is $R^n$-semirandom with respect to the measure classes $\sigma_C$ and $\sigma_G^{\times 2}$.
		\end{proposition}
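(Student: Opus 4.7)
The plan is to exploit the recursive decomposition
\[
\phi_1(A)=-\sum_{i=0}^{m(A)-1}\Phi_i(A),\qquad \Phi_i:=(X,Y)\circ D^i,
\]
together with the fact that $m(A)=\lfloor\log_2\len(A)\rfloor-1\leq \log_2(1.25R)+O(1)$, so only $O(\log R)$ terms appear in the sum. The strategy is to show that both building blocks, namely $D\colon G\to\mathbb ZG$, $A\mapsto X(A)+Y(A)$, and $(X,Y)\colon G\to G\times G$, are $C_0$-semirandom for some $C_0=C_0(M)$; then iterate $D$, compose with $(X,Y)$, and sum the $O(\log R)$ terms using the composition and sum propositions from Section~4. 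Iterating will give $D^i$ with semirandom norm $C_0^i$, hence $\Phi_i$ with norm $O(C_0^i)$; summing yields $\|\phi_1\|_{s.r.}\leq O(C_0^m)=O(R^{\log_2 C_0})=R^{n(M)}$. Since $\sigma_C$ agrees with $\sigma_G$ on the subset $Conn_{1,\leq 1.25R}(v,-v)\subset G$, this bound transfers to the measure classes in the statement.

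The key technical step is the semirandomness of $D$. Fixing $B\in G$, I want to bound $\sum_{A:X(A)=B}\sigma_G(A)$ (the case $Y(A)=B$ being symmetric). The constraint $X(A)=B$ forces $A_1:=A|_{[0,\len(A)/2]}$ to equal $B\cdot L_A$ as a group element, and estimate~(\ref{E1.4}) gives $\len(A)=2\len(B)-2K+O(1)$; so $A$ is determined by its second half $A_2$, a geodesic arc of length $\approx\len(B)-K$. Margulis's classical counting theorem (as used in Proposition~\ref{forgetful}) bounds the number of such $A_2$ by $\lesssim e^{\len(B)-K}$, and each contributes $\sigma_G(A)\lesssim e^{-2\len(B)+2K}$. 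Multiplying yields a contribution $\lesssim e^K\sigma_G(B)$ uniformly in $B$, so $D$ is $O(e^K)$-semirandom.

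For the pair map $(X,Y)$, the fiber over any point of $G\times G$ contains at most one element: the auxiliary arcs $L_A$ and $\bar L_A$ cancel in the middle of $X(A)\cdot Y(A)$, forcing $A=XY$ in the group. The length identity $\len(A)=\len(X)+\len(Y)-2K+O(1)$ then gives $\sigma_G(A)\lesssim e^{2K}\sigma_G(X)\sigma_G(Y)$, so $(X,Y)$ is $O(1)$-semirandom. Combining this with the composition and sum propositions completes the estimate.

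The main obstacle is keeping the constant $C_0$ genuinely uniform in $B$. Because $D$ is iterated up to $m-1=O(\log R)$ times, even a mild residual dependence on $\len(B)$ in the per-$B$ bound would be raised to the power $O(\log R)$ and destroy the polynomial-in-$R$ conclusion. This forces a careful application of the Chain Lemma to pin down $\len(A)=2\len(B)-2K+O(1)$ with only an additive $O(1)$ error, and a Margulis count applied at the precise length scale $\len(A)/2\approx\len(B)-K$ rather than integrated over a longer interval.
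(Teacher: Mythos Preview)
Your approach is essentially the same as the paper's: both show that the single dichotomy step $D$ and the pair map $(X,Y)$ are $O_M(1)$-semirandom, then iterate via the composition proposition and sum the $O(\log R)$ terms to get a polynomial bound in $R$. One minor point: when you count preimages of $B$ under $X$, the objects $A_2$ do not have a fixed initial point, so Margulis does not apply to them directly; the paper instead counts the partner element $Y(A)\in G$ (which is a loop at $*$ and is in bijection with $A_2$), and you should do the same.
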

		\begin{proof}
		By the following computation, the map $(X,Y)$ is $e^{2K}$-semirandom
		\[
		|(X,Y)|_*\sigma_G(X,Y)=e^{-\len(XY)}\leq e^{-\len(X)-\len(Y)+2K}=e^{2K}\sigma_G\times\sigma_G\left(X,Y\right).
		\]
		By the following computation, the map $D$ is $C_3e^{2K}$-semirandom for some constant $C_3$ depends on $M$
		\begin{equation*}
			\begin{aligned}
				|D|_*(\sigma_G)(X)=\sum_{Y}\frac{1}{e^{\len(XY)}}\leq \sum_Y\frac{1}{e^{\len(X)+\len(Y)-2K}}\leq \frac{C_3e^{2K}}{e^{\len(X)}}=C_3e^{2K}\sigma_G(X),
			\end{aligned}
		\end{equation*}
		where the summation over all elements $Y$ satisfies the estimate (\ref{E1.4}). The first inequality in the above estimate comes from the triangle inequality and the second inequality comes from the fact that there are at most $C_3 e^{\len(Y)}$ many $Y$ satisfies the length estimate (\ref{E1.4}). Thus,
		\[
		||\phi_2||_{s.r.}\leq\sum_{i=0}^{\lfloor\log_2 R\rfloor}||(X,Y)D^i||_{s.r.}\leq \sum_{i=1}^{\lfloor\log_2 R\rfloor+1}e^{2K}(C_3e^{2K})^i\leq R^n
		\]
		for some constant $n=n(C_3,K)=n(M)$. 
				\end{proof}
	
	\begin{proof} [Proof of the Bounded Group Homology Lemma]
	 We first fix the unit vector $v$ and the constant $D$. Let $N$ be the minimal natural number such that the word length of any group element $B$ with $\len(B)\leq 3K$ is less than $N$. By $G(N)$ we denote the set of all group elements whose word length is less than $N$. There exists a unit vector $v$ and a constant $D$ such that $I_v(B)\leq D_0$ for any $B\in G(N)$.  Let $D=\max\{D_0,D_1\}$ where $D_1$ is the constant in Lemma \ref{dicho-connec}. By applying Lemma \ref{T-esit}, every $(X,Y)\in G(N)\times G(N)$ is $(\epsilon,R,v)$-bounded.
		\par For any $B\in G(N)$ represented by $B=a_1\cdots a_s$, we define
		\[
		\phi_2(B)=-\sum_{i=1}^{s-1}\left(a_i,a_{i+1}\cdots a_s\right)		\]
		such that the following equation holds
		\[
		\partial \phi_2(B)=a_1\cdots a_s-\sum_ia_i=B-H(B).
		\]
		 Letting $\phi(A)=\phi_1(A)+\phi_2(D^m(A))$, we obtain 
		 \[\partial \phi(A)=A-H(A).\]
		Since $\phi_2$ is only defined on finitely many short elements, the semirandomness of $\phi$ is bounded by $\phi_1$. This completes the proof.
	\end{proof}

	\subsection{Proof of the Good Pants Homology Theorem}
	In this subsection, we prove the Good Pants Homology Theorem. \par 
		 
	 Let $R_1,\epsilon_1,q_1>0$ be constants such that, provided $R\geq R_1$, and $\epsilon_1\geq\epsilon\geq e^{-q_1R}$, the constants $\epsilon,R$ satisfy all assumptions from the Curve-to-Group Lemma, the Group-to-Pants Theorem, and the Bounded Group Homology Lemma. Let $v$ be the unit vector in the Bounded Group Homology Lemma. For any oriented closed curve $\gamma\in\Gamma_{\epsilon,R}$, the needed multi-pants $\Phi(\gamma)$ is constructed as follows:
	 \begin{enumerate}
	 	\item Let $X,Y$ be two dichotomy elements of $\gamma$ constructed in Subsection 7.1. By applying the Curve-to-Group Lemma, we get
	\[
	\gamma=-\partial \mathcal{RC}(\gamma)+\mathcal{R}_G(X)+\mathcal{R}_G(Y).
	\]
	The good curve is encoded in two bounded group elements.
		\item By applying the Bounded Group Homology Lemma, we obtain
	\[
	X=\partial \phi(X)+H(X),
	\] 
	and
	\[
	Y=\partial \phi(Y)+H(Y).
	\]
	Moreover, by applying the Group-to-Pants Theorem, we get
	\[
	\mathcal{R}_G(X)=\partial \mathcal{R}_{G\times G}(\phi (X))+\mathcal{R}_G(H(X))
	\]
	and
	\[
	\mathcal{R}_G(Y)=\partial \mathcal{R}_{G\times G}(\phi( Y))+\mathcal{R}_G(H(Y)).
	\]
	
	\item Let
	\[
	\Phi(\gamma)=-\mathcal{RC}(\gamma)+\mathcal{R}_{G\times G}(\phi (X))+\mathcal{R}_{G\times G}(\phi (Y)),
	\]
	and let $\Psi$ be the restriction of the map $\mathcal R_G$ on the set 
	\[H_1(M,\mathbb Z)=\mathbb Z\{h_1,\dots,h_{2g}\}\subset\mathbb Z G.\]
	Combining the equations in the step 1 and step 2, we get
	\[
	\gamma=\partial \Phi(\gamma)+\mathcal{R}_G(H(X))+\mathcal{R}_G(H(Y)).
	\]
	Since $H(\gamma)=H(X)+H(Y)$,  this implies that the maps $\Phi$ and $\Psi$ satisfy the homology condition.
	 \end{enumerate}
	  The semirandom norm of the map $\Phi$ is bounded by the semirandom norms of the maps $\mathcal{RC},\mathcal R_{G\times G}$ and $\phi$, which are all polynomials of $R\epsilon^{-1}$ depending only on the surface $M$.  This implies the randomness condition and completes the proof.

\end{document}